\newtheorem{thm}{Theorem}[section]
\newtheorem{lem}[thm]{Lemma}
\newtheorem{prop}[thm]{Proposition}
\theoremstyle{definition}
\newtheorem{rem}[thm]{Remark}
\newtheorem{defn}[thm]{Definition}
\DeclareMathOperator{\colim}{colim}
\def\Z{\mathbb{Z}}
\def\Q{\mathbb{Q}}
\def\F{\mathbb{F}}
\def\R{\mathcal{R}}
\def\C{\mathcal{C}}
\def\call{\mathcal{L}}
\def\M{\mathcal{M}}
\def\T{\mathsf{Top}}
\def\THH{\mathsf{THH}}
\def\TAQ{\mathsf{TAQ}}
\def\AQ{\mathsf{AQ}}
\def\Der{\mathsf{Der}}
\def\Hom{\mathsf{Hom}}
\def\Pic{\mathsf{Pic}}
\def\pic{\mathsf{pic}}
\def\PIC{\mathsf{PIC}}
\def\Picard{\mathsf{Picard}}
\def\TMF{\mathsf{TMF}}
\def\Tmf{\mathsf{Tmf}}
\def\tmf{\mathsf{tmf}}
\def\Sp{\mathsf{Sp}}
\def\HR{\mathit{HR}}
\def\HZ{\mathit{H}\!\mathbb{Z}}
\def\HA{\mathit{HA}}
\def\MU{\mathit{MU}}
\def\BP{\mathit{BP}}
\def\KO{\mathit{KO}}
\def\KU{\mathit{KU}}
\def\ko{\mathit{ko}}
\def\ku{\mathit{ku}}
\def\Ext{\mathsf{Ext}}
\def\ra{\rightarrow}
\def\leq{\leqslant}
\def\geq{\geqslant}
\def\call{\mathcal{L}}
\def\HH{\mathsf{HH}}
\def\bfn{\mathbf{n}}
\def\bfm{\mathbf{m}}
\def\bf0{\mathbf{0}}
\def\ie{\emph{i.e.}}
\def\id{\mathrm{id}}
\numberwithin{equation}{section}
\begin{document}
\title{Commutative ring spectra}
\author{Birgit Richter}
\address{Fachbereich Mathematik der Universit\"at Hamburg,
Bundesstra{\ss}e 55, 20146 Hamburg, Germany}
\email{richter@math.uni-hamburg.de}
\urladdr{http://www.math.uni-hamburg.de/home/richter/}
\date{\today}

\keywords{Commutative ring spectra, $E_\infty$-structures on ring spectra,
obstruction theory,  units of ring spectra, Thom spectra, topological
Hochschild homology, topological Andr\'e-Quillen homology, \'etale maps,
descent}
\subjclass[2000]{55P43, 55N35}

\begin{abstract}
In this survey paper on commutative ring spectra we present some basic
features of commutative ring spectra and discuss model category
structures. As a first interesting class of examples of such ring spectra we
focus on (commutative) algebra spectra over commutative
Eilenberg-Mac\,Lane ring spectra. We present two constructions that
yield commutative ring spectra: Thom spectra associated to infinite
loop maps  and Segal's
construction starting with bipermutative categories.  We define topological
Hochschild  homology, some of its variants, and topological
Andr\'e-Quillen homology. Obstruction theory for
commutative structures on ring spectra is
described in two versions. The notion of \'etale extensions in the spectral
world is tricky and we explain why. We define Picard groups and Brauer groups
of commutative ring spectra and present examples.
\end{abstract}
\maketitle

\section{Introduction}

Since the 1990's we have several symmetric monoidal categories of
spectra at our disposal whose homotopy category is the stable homotopy
category. The monoidal structure is usually denoted by $\wedge$ and is
called the smash product of spectra. So since then we can talk about
commutative objects in any of these categories -- these are commutative
ring spectra. Even before such symmetric monoidal categories were
constructed, the consequences of their existence were described. In
\cite[\S 2]{waldhausen} Friedhelm Waldhausen outlines the role of
`rings up to homotopy'. He also coined the expression `brave new
rings' in a 1988 talk at Northwestern.

So what is the problem? Why don't we just write down nice
commutative models of our favorite homotopy types and are down with it?
Why does it make sense to have a whole chapter about this topic?

In algebra, if someone tells you to check whether a given ring is
commutative, then you can sit down and check the axiom for
commutativity and you should be fine. In stable homotopy theory the problem
is more involved, since strict commutativity may only be satisfied by
some preferred point set level model of the underlying associative
ring spectrum and the operadic incarnation of commutativity is an
extra structure rather than a condition.

There is one class of commutative ring spectra that is easy to
construct. If you take singular cohomology 
with coefficients in a commutative ring $R$, then this is represented
by the Eilenberg-MacLane spectrum $\HR$ and this can be represented by
a commutative ring spectrum.

So it would be nice if we could have explicit models for other
homotopy types that come naturally equipped with a commutative ring
structure. Sometimes this is possible. If you are interested in real
(or complex) vector bundles over your space, then you want to
understand real (or complex) topological K-theory and Michael Joachim
\cite{joachim}
for instance produced an explicit analytically flavoured model for periodic
real topological K-theory
as a commutative ring spectrum in the setting of symmetric spectra \cite{HSS}.

There are a few general constructions that produce commutative ring
spectra for you. For instance, the construction of Thom spectra
often gives rise to commutative ring spectra. We will discuss this
important class of examples in Section \ref{sec:thom}. A classical construction
due to Graeme Segal also produces small nice models of commutative ring
spectra (see Section \ref{sec:segal}).

Quite often, however, the spectra that we like are constructed in a
synthetic way: You have some commutative ring spectrum $R$ and you kill a
regular sequence of elements in its graded commutative ring of
homotopy groups, $(x_1,x_2,\ldots)$, $x_i \in \pi_*(R)$, and you
consider a spectrum $E$  with homotopy groups
$\pi_*(E) \cong \pi_*(R)/(x_1,x_2,\ldots)$. Then it is not clear that
$E$ is a commutative ring spectrum.

A notorious example is the Brown-Peterson spectrum, $\BP$. Take the complex
cobordism spectrum $\MU$. Its homotopy groups are
$$ \pi_*(\MU) = \Z[x_1,x_2,\ldots]$$
with $x_i$ being a generator in degree $2i$. If you fix a large even
degree, then you have a lot of possible elements in that degree, so
you might wish to consider a spectrum with sparser homotopy
groups. Using the theory of (commutative, $1$-dimensional) formal
group laws you can do that: If you
consider a prime $p$, then there is a spectrum, called the
Brown-Peterson spectrum, that corresponds to $p$-typical formal group
laws. It can
be realized as the image of an idempotent on $\MU$ and has
$$\pi_*(\BP) \cong \Z_{(p)}[v_1, v_2, \ldots]$$
but now the algebraic generators are spread out in an exponential
manner: The degree of $v_i$ is $2p^i -2$. You can actually choose the $v_i$ as
the $x_{p^i-1}$, so you can think of $\BP$ as a quotient of $\MU$ in the above
sense. Since its birth in 1966 \cite{BP} its
multiplicative properties where an important issue. In \cite{bama} it was
for instance shown that $\BP$ has some partial coherence for homotopy
commutativity, but in 2017 Tyler
Lawson finally shows that at the prime $2$ at least,
$\BP$ is \emph{not} a commutative ring spectrum \cite{lawson}!

There are even worse examples: If you take the sphere spectrum
$S$ and you try to kill the non-regular element $2 \in
\pi_0(S)$ then you get the mod-$2$-Moore spectrum. That isn't
even a ring spectrum up to homotopy. You can also kill all the
generators $v_i \in \pi_*(BP)$ including $p=v_0$ and leaving only one
$v_n$ alive. The resulting spectrum is the connective version of
Morava K-theory, $k(n)$. At the prime $2$ this isn't even homotopy
commutative. In fact, Urs W\"urgler shows more \cite{wuergler}: If $\pi_0$
of a homotopy commutative ring spectrum has characteristic two, then it
is a generalized Eilenberg-Mac\,Lane spectrum. Recent work of Mathew, Naumann 
and Noel puts severe restrictions on
finite $E_\infty$-ring spectra \cite{mnn}.

So how do you decide such questions? How can you determine whether a
given spectrum is a commutative ring spectrum if you don't have a
construction that tells you right
away that it is commutative? This is where obstruction theory comes
into the story.

There is an operadic notion of an $E_\infty$-ring spectrum that goes
back to Boardman-Vogt and May. Comparison theorems \cite{MMSS},
\cite{schwede-comp} then tell you whether these more complicated
objects are equivalent to commutative ring spectra. In the categories
of symmetric spectra, orthogonal spectra and $S$-modules they are.

Obstruction theory might help you with a decision whether a
spectrum carries a commutative monoid structure: One version
\cite{basterra} gives obstructions for lifting the ordinary Postnikov
tower to a Postnikov tower that lives within the category of commutative
ring spectra. The other kind finds some obstruction classes that tell
you that you cannot extend some partial bits and pieces of a nice
multiplication to a fully fledged structure of an $E_\infty$-ring
spectrum or that some homology or homotopy operation that you observe
contradicts such a structure. This can be used for a negative result (as
in \cite{lawson}) or for positive statements: There are result by Robinson
\cite{robinson} and Goerss-Hopkins \cite{GH,GHall} that tell you that
you have a (sometimes even unique) $E_\infty$-ring structure on your
spectrum if all the obstruction \emph{groups} vanish. Most notably
Goerss and Hopkins used obstruction theory to prove that the Morava
stabilizer groups acts on the corresponding Lubin-Tate spectrum via
$E_\infty$-morphisms \cite{GH}.

There are other things that are weird for commutative ring
spectra. Quite often, we end up working with ideals in the graded
commutative ring of homotopy groups, but as we saw above, this is not
a suitable notion of ideal. There is a notion of an ideal in the
context of (commutative) ring spectra \cite{hovey-ideals} due to Jeff
Smith, but still several algebraic constructions do not have an
analogue in spectra.

The algebraic behaviour on the level of homotopy groups can be quite
deceiving: complexification turns a real vector bundle into a complex
vector bundle. This induces a map $\pi_*(\KO) \ra \pi_*(\KU)$ which can
be realized as a map of commutative ring spectra $c \colon \KO \ra \KU$.
On homotopy groups we get
\begin{equation} \label{eq:kotoku}
\pi_*(c) \colon \pi_*(\KO) = \Z[\eta, y, \omega^{\pm 1}]/(2\eta,
\eta^3, \eta y, y^2-4w) \ra \Z[u^{\pm 1}] \pi_*(\KU).
\end{equation}
Here the degrees are $|\eta|=1$, $|y|=4$, $|w|=8$, $|u|=2$ and $y$ is sent
to $2u^2$. So on
the algebraic level $c$ is horrible. But John Rognes showed that
the conjugation action on $\KU$ turns the map $c \colon \KO \ra \KU$
into a $C_2$-Galois extension of commutative ring spectra!

Even for ordinary rings, viewing a (commutative) ring $R$ as a
(commutative) ring spectrum via the Eilenberg-Mac\,Lane spectrum
functor changes the situation completely. The ring $R$ has a characteristic
map $\chi \colon \Z \ra R$ because the integers are the initial ring.
As a ring spectrum, $H\Z$ is far from being initial. The map $H\chi$ can
be precomposed with the unit map of $H\Z$
$$ \xymatrix@1{{S} \ar[r]^\eta & {H\Z} \ar[r]^{H\chi} & {HR}
}$$
and the sphere spectrum $S$ is the initial ring spectrum! Now there
is a lot of space between the
sphere and any ring. I will discuss two consequences that this has:
There is actually algebraic geometry happening between the sphere
spectrum and the
prime field $\F_p$: There is a Galois extension of commutative
ring spectra (see \ref{def:galois}) $A \ra H\F_p$!

Another feature is that there exist differential graded algebras $A_*$
and $B_*$ that are not quasi-isomorphic, but whose associated
algebra spectra over an Eilenberg-Mac\,Lane spectrum
\cite{shipley} are equivalent as ring spectra \cite{DS}.
Similar phenomena happen if you consider differential graded
$E_\infty$-algebras: There are non quasi-isomorphic ones 
whose associated commutative algebras over an Eilenberg-Mac\,Lane
spectrum \cite{rs} are equivalent as commutative ring spectra
\cite{bayindir}.

\subsection*{Content}
The structure of this overview is as follows:
We start with some basic features of commutative ring spectra and
their model category structures in Section \ref{sec:modelcats}. The most
basic way to relate classical algebra to brave new algebra is via the
Eilenberg-Mac\,Lane spectrum functor. We study chain algebras and
algebras over Eilenberg-Mac\,Lane ring spectra in Section
\ref{sec:hralgs}. As you can study the group of units of a ring we
consider units of ring spectra and Thom spectra in Section
\ref{sec:thom}. In Section
\ref{sec:segal} we present a construction going back to Segal. Plugging
in a bipermutative category yields a commutative ring spectrum.

In Section \ref{sec:thhetc} we introduce topological
Hochschild homology and some of its variants and topological
Andr\'e-Quillen homology. 
In Section \ref{sec:obs} we discuss some versions of obstruction theory
that tell you whether a given multiplicative cohomology theory can be
represented by a strict commutative model.

Some concepts from algebra translate directly
to spectra but some others don't. We discuss the different concepts of
\'etale maps
for commutative algebra spectra in Section \ref{sec:etale}. Picard and Brauer
groups for
commutative ring spectra are important invariants and feature in Section
\ref{sec:pic}.

\subsection*{Disclaimers} For more than 30 years, the phrase 
\emph{commutative ring spectrum} meant a commutative monoid in the
homotopy category of spectra. Since the 90's this has changed. At
the beginning of this new era people were careful not to use this name,
in order to
avoid confusion with the homotopy version. In this paper we reserve the
phrase 
\emph{commutative ring spectrum} for a commutative monoid
in some symmetric monoidal category of spectra.

The second disclaimer is that for this paper a space is always compactly
generated weak Hausdorff. I denote the corresponding category just by $\T$.

Last but not least: Of course, this overview is not complete. I had to
omit important aspects of the field due to space constraints. Most
prominently probably is the omission of not discussing topological cyclic
homology and its wonderful applications to algebraic K-theory.

I try to give adequate references, but
often it was just not feasible to describe the whole development of
a topic and much worse, I probably have forgotten to cite important
contributions. If you read this and it affects you, then I can only
apologize.

\subsection*{Acknowledgements}
The  author  thanks  the  Hausdorff  Research  Institute  for
Mathematics in Bonn for its hospitality during the Trimester Program
\emph{K-Theory and Related Fields}. She thanks Akhil Mathew for
clarifying one example, Andy Baker for pointing out an omission and 
 Steffen Sagave for many valuable comments on a draft version of this
 paper. 

\section{Features of commutative ring spectra}
\label{sec:modelcats}
\subsection{Some basics}
Before we actually start with model structures, we state some basic facts
about commutative ring spectra.

Let $R$ be a commutative ring spectrum. Then the category of $R$-module 
spectra is closed symmetric monoidal: For two such $R$-module spectra
$M, N$ the  
smash product over $R$, $M \wedge_R N$, is again an $R$-module and the usual 
axioms of a symmetric monoidal category are satisfied. There is an
$R$-module spectrum $F_R(M, N)$,  the function spectrum of $R$-module
maps from $M$ to $N$. 

We denote the 
category of $R$-module spectra by $\mathcal{M}_R$. The category of commutative
$R$-algebras is the category of commutative monoids in $\mathcal{M}_R$ and we 
denote this category by $\C_R$.

By definition, every object $A$ of $\C_R$ receives a unit map from $R$ and
hence $R$ is initial in $\C_R$. In particular, the sphere spectrum is the
initial commutative ring spectrum. Every discrete ring is a $\Z$-algebra,
similarly, every (commutative) ring spectrum is a (commutative) $S$-algebra.
If $R$ is a commutative ring spectrum, then the category of commutative
$R$-algebra is isomorphic to the category of commutative ring spectra under
$R$, \ie, such commutative ring spectra $A$ with a distinguished map $\eta
\colon R \ra A$ in that category.

We allow the trivial $R$-algebra corresponding to the one-point
spectrum $\ast$ and this spectrum is a terminal object in $\C_R$.

In any symmetric monoidal category $(\C, \otimes, 1, \tau)$ the coproduct
of two commutative monoids $A$ and $B$ in $\C$ is $A \otimes B$. So, for
two commutative $R$-algebras $A$ and $B$, their coproduct is
$A \wedge_R B$.

\subsection{Model structures on commutative monoids}

I will assume that you are familiar with the concept of model categories and
that you have seen some examples and read Chapter 4 in this book. 
Good general reference are Hovey's
\cite{hovey} or Hirschhorn's \cite{hirschhorn} book.
You could also just skip this section and have in mind that there are
some serious model category issues lurking in the dark.

For this section I will mainly focus on two models for spectra: Symmetric
spectra \cite{HSS} and $S$-modules \cite{EKMM}. They are different
concerning their model structures. In the model structure in
\cite{HSS} on symmetric spectra the sphere spectrum is cofibrant
whereas in the one for $S$-modules it is not, but all objects are
fibrant.

The model structures on commutative monoids in either of the categories
\cite{EKMM,HSS} are special cases of a \emph{right induced model structure}:
We have a functor $P_R$ from $R$-module spectra to commutative $R$-algebra
spectra assigning the free commutative $R$-algebra spectrum on $M$ to any
$R$-module spectrum $M$, explicitly
$$ P_R(M) = \bigvee_{n \geq 0} M^{\wedge_R n}/\Sigma_n.$$

The symbol $P_R$ should remind you of a polynomial algebra. 
This functor has a right adjoint, the forgetful functor $U$. In a
right-induced model structure one determines the fibrations and weak
equivalences by the right adjoint functor. In our cases, a map of commutative
$R$-algebra spectra is a fibration or a weak equivalence, if it is one
in the underlying category of $R$-module spectra. Note that establishing
right induced model structures on commutative monoids in some model category
does  not always work. The standard example is the category of $\F_p$-chain
complexes (say $p$ is an odd prime). Then the chain complex $\mathbb{D}^2$
is acyclic, having $\F_p$ in degrees $1$ and $2$ with the identity map as
differential, but the free graded commutative monoid generated by it is
$\Lambda_{\F_p}(x_1) \otimes \F_p[x_2]$ with $|x_i|=i$ and the 
induced differential is determined by $d(x_2)=x_1$ and the Leibniz rule. 
But then $d(x_2^p)$ is a cycle that is not a boundary, so the resulting
object is not acyclic.

If $R$ is a commutative $S$-algebra in the setting of EKMM \cite{EKMM},
then the categories of associative $R$-algebras and of
commutative $R$-algebras possess a right induced model structure
\cite[Corollary VII.4.10]{EKMM}. The existence of
the model structure for commutative monoids is a special case of the
existence of right-induced model structures for $\mathbb{T}$-algebras
(\cite[Theorem VII.4.9]{EKMM}) where $\mathbb{T}$ is a continuous
monad on the category of $R$-module spectra that preserves reflexive
coequalizers and satisfies the cofibration hypothesis
\cite[VII.4]{EKMM}. The category of commutative $S$-algebras is
identified \cite[Proposition II.4.5]{EKMM} with the category of 
algebras for the monad $P_S$ as above on the category of $S$-modules.

In diagram categories such as symmetric spectra and orthogonal spectra
the situation is different: In the standard model structures on these
categories the sphere spectrum is cofibrant. If one would take a
right-induced model structure on the category of commutative monoids,
\ie, the model structure such that a map of commutative ring spectra
$f \colon A \ra B$ is a fibration or weak equivalence if it is one in
the underlying category, then the sphere would still be cofibrant. If
we take a fibrant replacement of the sphere $S \ra S^{\text{fib}}$, then in
particular $S^{\text{fib}}$ would be fibrant in the model category of symmetric
spectra, hence it would be an Omega-spectrum and its zeroth level
would be a strictly commutative model for $QS^0$. However, Moore
shows \cite[Theorem 3.29]{moore} that this implied that $QS^0$ had
the homotopy type of a product of Eilenberg-MacLane spaces --  but
this is false.

The usual way to avoid this problem is to consider a positive model
structure on $Sp^\Sigma$ (see \cite[Definition 6.1]{MMSS} for the general
approach). Here the
positive level fibrations (weak equivalences) are maps $f \in
Sp^\Sigma(X, Y)$ such that $f(n)$ is a fibration (weak equivalence)
for all levels $n \geq 1$. The positive cofibrations are then cofibrations in
$Sp^\Sigma$ that are isomorphisms in level zero. The positive stable
model category is then obtained by a Bousfield localization that forces the
stable equivalences to be the weak equivalences and the right-induced model 
structure on the commutative monoids in $Sp^\Sigma$ then has the desired 
properties. 

There is another nice model for connective spectra, given by $\Gamma$-spaces
\cite{catsandco,lydakis}.
This category is built out of functors from finite pointed sets to spaces,
so it is a very hands-on category with explicit constructions. It is also a
symmetric monoidal category with a suitable model structure.
We refer to \cite{lydakis,schwede-gamma} for background on this. 
Its (commutative) monoids are called (commutative) $\Gamma$-rings.
Beware that commutative $\Gamma$-rings, however, do \emph{not} model all
connective commutative ring spectra. Tyler Lawson proves in
\cite{lawson-gamma} that commutative $\Gamma$-rings satisfy a vanishing
condition for Dyer-Lashof operations of positive degree on classes in
their zeroth mod-$p$-homology (for all primes $p$) and that for instance
the free $E_\infty$-ring spectrum generated by $\mathbb{S}^0$ cannot be
modelled as a commutative $\Gamma$-ring.

\subsection{Behaviour of the underlying modules}

In the setting of EKMM it is shown that the underlying
$R$-modules of  cofibrant commutative $R$-algebras have a well-behaved
smash product in the derived category of $R$-modules:
\begin{thm} \, \cite[Theorem VII.6.7]{EKMM} \,
If $A$ and $B$
are two cofibrant commutative $R$-algebras, and if
$\xymatrix@1{{\varphi_A \colon \Gamma A} \ar[r]^(0.65){\sim} & {A}}$ and
$\xymatrix@1{{\varphi_B \colon \Gamma B} \ar[r]^(0.65){\sim} & {B}}$
are chosen cell $R$-module spectra approximations
then
$$\varphi_A \wedge_R \varphi_B \colon \Gamma A \wedge_R \Gamma B \ra A
\wedge_R B$$
is a weak equivalence.
\end{thm}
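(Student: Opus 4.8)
The plan is to reduce the statement to one flatness assertion about underlying $R$-modules, namely: \emph{if $B$ is a cofibrant commutative $R$-algebra, then $-\wedge_R B$ carries every weak equivalence of $R$-modules to a weak equivalence.} Granting this, the theorem follows by a two-step comparison. Since $\Gamma A$ is a cell $R$-module, $\Gamma A\wedge_R -$ preserves weak equivalences, so $\id_{\Gamma A}\wedge_R\varphi_B\colon \Gamma A\wedge_R\Gamma B\to\Gamma A\wedge_R B$ is a weak equivalence; by the flatness assertion applied to $B$, the map $\varphi_A\wedge_R\id_B\colon \Gamma A\wedge_R B\to A\wedge_R B$ is a weak equivalence; and $\varphi_A\wedge_R\varphi_B$ is the composite of the two.

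So the work is to prove the flatness assertion. First one reduces to the case where $B$ is a \emph{cell} commutative $R$-algebra: every cofibrant object of $\C_R$ is a retract of one built from $R$ by a transfinite sequence of pushouts of free maps $P_R(S_R^{q-1})\to P_R(D_R^q)$ (the free commutative $R$-algebras on the generating cofibrations of $\M_R$), and the class of $R$-modules $B$ for which $-\wedge_R B$ preserves weak equivalences is closed under retracts. Writing such a cell $B$ as $\colim_n B_n$ with $B_0=R$ and $B_{n+1}$ the pushout of $B_n\leftarrow P_R(X_n)\to P_R(Y_n)$ along a coproduct $X_n\to Y_n$ of generating cofibrations of $\M_R$, one argues by induction on $n$ using two ingredients. \textbf{(i)} For a cofibrant $R$-module $M$ the underlying $R$-module of $P_R(M)=\bigvee_{n\geq 0}M^{\wedge_R n}/\Sigma_n$ is flat in the above sense; indeed each extended power $M^{\wedge_R n}/\Sigma_n$ is a retract of a cell $R$-module, and a wedge of flat $R$-modules is flat. \textbf{(ii)} The cofibration hypothesis of \cite{EKMM} guarantees that $B_n\to B_{n+1}$ is a cofibration of underlying $R$-modules, so that (since $\wedge_R$ preserves pushouts and cofibrations) $C\wedge_R B_{n+1}$ is the \emph{homotopy} pushout of $C\wedge_R B_n\leftarrow C\wedge_R P_R(X_n)\to C\wedge_R P_R(Y_n)$ for any $R$-module $C$; the gluing lemma together with the inductive hypothesis for $B_n$ and ingredient \textbf{(i)} for $P_R(X_n),P_R(Y_n)$ then forces $B_{n+1}$ to be flat, and passage to the colimit along cofibrations (a filtered homotopy colimit) preserves flatness.

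The main obstacle is ingredient \textbf{(i)}, the homotopical control of the extended powers $M^{\wedge_R n}/\Sigma_n$. In a naive symmetric monoidal category of spectra these orbit spectra are badly behaved — this is exactly the phenomenon behind the failure of right-induced model structures on commutative monoids in $\F_p$-chain complexes recalled above, and the reason one is forced onto positive model structures on symmetric spectra. What rescues the situation in $\M_R$ is that the smash product of $S$-modules is built from the linear isometries operad $\mathcal{L}$, whose $n$-th space $\mathcal{L}(n)$ is $\Sigma_n$-free (and contractible); consequently $M^{\wedge_R n}$ carries a sufficiently free $\Sigma_n$-action that its strict orbits compute its homotopy orbits, and $M^{\wedge_R n}/\Sigma_n$ inherits the homotopy type of a cell $R$-module whenever $M$ does. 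Making this precise rests on the analysis of twisted half-smash products carried out earlier in \cite{EKMM}; once \textbf{(i)} is available, ingredient \textbf{(ii)} and the assembly above are essentially formal.
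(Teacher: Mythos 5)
The paper itself gives no proof here; the theorem is stated as a direct citation of \cite{EKMM}, so the relevant comparison is with the EKMM argument. Your overall blueprint matches that argument well: the reduction to the assertion that a cofibrant commutative $R$-algebra $B$ is flat as an $R$-module, the two-step factorization through $\Gamma A\wedge_R B$, the retract reduction to cell commutative $R$-algebras, and — most importantly — ingredient \textbf{(i)}, that the extended powers $M^{\wedge_R n}/\Sigma_n$ of a cell $R$-module are homotopically controlled because the linear isometries operad underlying the EKMM smash product is $\Sigma_n$-free, are all exactly the key points that EKMM (III.5 and VII.6) develops.

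Ingredient \textbf{(ii)}, however, has a genuine gap. The spectrum $B_{n+1}$ is the pushout of $B_n\leftarrow P_R(X_n)\to P_R(Y_n)$ in \emph{commutative $R$-algebras}, i.e.\ $B_{n+1}=B_n\wedge_{P_R(X_n)}P_R(Y_n)$. The forgetful functor to $R$-modules is a right adjoint and does not preserve pushouts, so $B_{n+1}$ is \emph{not} the pushout of this span in $R$-modules (discrete analogue: the pushout of $k[x]\leftarrow k\to k[y]$ in commutative $k$-algebras is $k[x,y]$, but in $k$-modules it is $k[x]\oplus_k k[y]$). Hence the identification of $C\wedge_R B_{n+1}$ with the homotopy pushout of $C\wedge_R B_n\leftarrow C\wedge_R P_R(X_n)\to C\wedge_R P_R(Y_n)$ is false, and the gluing-lemma step collapses. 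What EKMM does instead is filter $B_{n+1}$ \emph{as a $B_n$-module}, $B_n=F_0\subset F_1\subset\cdots$, with subquotients $F_j/F_{j-1}$ equivalent to $B_n\wedge_R \bigl((Y_n/X_n)^{\wedge_R j}/\Sigma_j\bigr)$; each subquotient is flat by the inductive hypothesis on $B_n$ together with your ingredient \textbf{(i)}, the filtration maps are cofibrations of $R$-modules (this is where the cofibration hypothesis is actually needed, not for the pushout identification), and flatness passes to the colimit. So the symmetric-power analysis does even more work than your sketch allots it: it governs both the free commutative $R$-algebras on cells and the subquotients of this filtration of each pushout stage.
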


Brooke Shipley developed a model structure for commutative symmetric ring
spectra in \cite{shipleyconvenient} in which the
underlying symmetric spectrum of a cofibrant commutative ring spectrum
is also cofibrant as a symmetric spectrum \cite[Corollary
4.3]{shipleyconvenient}.

She starts with introducing a different model structure
on symmetric spectra. Let $M$ denote the class of monomorphisms of
symmetric sequences in pointed simplicial sets and let $S \otimes M$
denote the set $\{S \otimes f, f \in M\}$ where $\otimes$ denotes the
tensor product of symmetric sequences. An
\emph{$S$-cofibration}  is a morphism in $(S \otimes
M)$-cof, \ie, a morphism in $Sp^\Sigma$ that has the left lifting
property with respect to maps that have the right lifting property
with respect to $S \otimes M$. She shows that the classes
of $S$-cofibrations and stable equivalences determine a model structure
with the $S$-fibrations being the class of morphisms with the right
lifting property with respect to $S$-cofibrations that are also stable
equivalences \cite[Theorem 2.4]{shipleyconvenient}. This model
structure was already mentioned in \cite[5.3.6]{HSS}. Shipley proves
that this model structure is cofibrantly generated, is monoidal and
satisfies the monoid axiom \cite[2.4, 2.5]{shipleyconvenient}.

Note
that symmetric spectra are $S$-modules in symmetric sequences. This
allows for a version of an \emph{$R$-model structure} for every associative
symmetric ring spectrum $R$ with $R$-cofibrations, $R$-fibrations and
stable equivalences \cite[Theorem 2.6]{shipleyconvenient}.
In the positive variant of this model structure the positive
$R$-cofibrations are $R$-cofibrations that are isomorphisms in level
zero. Together with the stable equivalences this determines the
\emph{positive $R$-model structure}.

The corresponding right induced model structure on commutative
$R$-algebra spectra for a commutative symmetric ring spectrum $R$ is
then the \emph{convenient} model structure: The weak equivalences are
stable equivalences, the fibrations are positive $R$-fibrations and
the cofibrations are determined by the structure.

She then shows the remarkable property of this model structure on
commutative $R$-algebra spectra:

\begin{thm} \, \cite[Corollary 4.3]{shipleyconvenient} \,
If $A$ is cofibrant as a commutative $R$-algebra then $A$ is
$R$-cofibrant in the $R$-model structure. If $A$ is fibrant
as a commutative $R$-algebra, then $A$ is fibrant 
  in the positive $R$-model structure on $R$-module spectra.
\end{thm}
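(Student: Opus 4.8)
The plan is to reduce both statements to a transfer-of-lifting-property argument between the three model structures in play: the positive $R$-model structure on $R$-module spectra, the $S$-model structure on symmetric spectra (restricted along the unit $S \to R$), and the convenient model structure on commutative $R$-algebras. The key technical input, which I would take as given from Shipley's foundational results cited in the excerpt, is that the free commutative $R$-algebra functor $P_R$ behaves well on $R$-cofibrations: if $M \to N$ is a positive $R$-cofibration of $R$-modules, then $P_R(M) \to P_R(N)$ should again be an $R$-cofibration (not just a cofibration of commutative $R$-algebras). The reason one expects this is that the $S$-model structure was engineered precisely so that the symmetric group quotients $M^{\wedge_R n}/\Sigma_n$ occurring in $P_R$ do not destroy $R$-cofibrancy — the monomorphisms of symmetric sequences are closed under the relevant quotient constructions in a way that the ordinary cofibrations are not.

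First I would make the reduction for the cofibrancy statement. A cofibrant commutative $R$-algebra $A$ is a retract of a cell commutative $R$-algebra, i.e.\ a transfinite composite of pushouts of maps $P_R(f)$ where $f$ ranges over the generating positive $R$-cofibrations of $R$-modules (with source a cell $R$-module, by the small object argument). Since the forgetful functor $U \colon \C_R \to \M_R$ preserves filtered colimits and pushouts along free maps can be analyzed via the standard filtration of a pushout $P_R(M) \to B$, one shows by induction up the cell tower that each stage is an $R$-cofibration of the underlying $R$-modules. Here one uses that $U$ applied to a pushout of $P_R(f)$ along $P_R(M) \to B$ is built from $B$ by attaching cells of the form $(\text{cofibration})^{\Box n}$ suitably quotiented by $\Sigma_n$, and the $S$-/$R$-cofibrations are stable under these operations. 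Retracts of $R$-cofibrations are $R$-cofibrations, which closes the argument.

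For the fibrancy statement the argument runs the other direction and is shorter. The fibrations in the convenient model structure on $\C_R$ are defined to be the positive $R$-fibrations on underlying $R$-modules — so a fibrant object of $\C_R$ is one for which $A \to \ast$ is a positive $R$-fibration, which is literally the assertion that $A$ is fibrant in the positive $R$-model structure on $R$-module spectra. So this half is essentially a matter of unwinding the definition of the right-induced model structure, together with the observation that the positive $R$-model structure on $\M_R$ is indeed a model structure (Shipley's \cite[Theorem 2.6]{shipleyconvenient}).

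The main obstacle will be the cell-by-cell induction in the cofibrancy half: one has to control the underlying $R$-module of a pushout of a free map and verify that the associated filtration quotients — which involve the $n$-th smash powers of a generating cofibration modulo the $\Sigma_n$-action, smashed against powers of $B$ — stay within the class of $R$-cofibrations. This is exactly the place where the ordinary positive model structure fails (as the $\F_p$-chain-complex example in the excerpt illustrates in algebra) and where the $S$-model structure's use of \emph{all} monomorphisms of symmetric sequences, rather than just the cofibrations, is essential; making this step precise requires Shipley's analysis of the interaction of $(S\otimes M)$-cofibrations with symmetric powers, and everything else is bookkeeping around retracts, filtered colimits, and the small object argument.
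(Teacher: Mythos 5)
The survey paper does not prove this result; it cites Shipley's \cite[Corollary 4.3]{shipleyconvenient} and follows it only with the remark that $R$ fails to be positively $R$-cofibrant, so there is no in-paper proof to compare against, and your sketch must be judged as a reconstruction of Shipley's argument. The fibrancy half is correct and, as you say, essentially definitional: the fibrations in the convenient model structure on $\C_R$ are declared to be positive $R$-fibrations of underlying modules, so "$A$ fibrant in $\C_R$" is synonymous with "$A$ fibrant in the positive $R$-model structure on $\M_R$." The cofibrancy half correctly identifies the standard reduction (retracts of cell algebras, transfinite induction, filtration of pushouts of free maps) and correctly pinpoints where the $S$-model structure is essential, namely in controlling the $\Sigma_n$-quotients of smash powers that arise in the filtration quotients; this is indeed the shape of Shipley's proof. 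One caveat: the ``key technical input'' you state at the outset --- that $P_R$ carries positive $R$-cofibrations to maps that are $R$-cofibrations on underlying modules --- is only a toy version of what the induction actually needs. The real lemma is about pushouts of $P_R(f)$ along an arbitrary commutative $R$-algebra map $P_R(K) \to B$ with $B$ already $R$-cofibrant: the filtration quotients involve $B$ smashed over $R$ with $\Sigma_n$-quotients of the iterated pushout-product of $f$, and it is the stability of the $(S \otimes M)$-cofibrations under those $\Sigma_n$-quotients that does the work. Your second paragraph does compensate for this and you explicitly flag that making the $\Sigma_n$-quotient step precise is the hard part, so this reads as a correct outline with the technical core deferred to Shipley rather than as a gap.
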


The positive $R$-model structure ensures that $R$ is \emph{not}
cofibrant, hence cofibrant commutative $R$-algebras will not be
positively $R$-cofibrant!

\subsection{Comparison, rigidification and $E_n$-structures}

Stefan Schwede proves \cite[Theorem 5.1]{schwede-comp} that the
homotopy category
of commutative $S$-algebras from \cite{EKMM} is equivalent to the
homotopy category of commutative symmetric ring spectra by
establishing a Quillen equivalence between the corresponding model
categories. In \cite[Theorem 0.7]{MMSS} the analogous comparison
result is proven for commutative orthogonal ring spectra and
commutative symmetric ring spectra.

Even before any symmetric monoidal category of spectra was
constructed, the notion of operadically defined $E_\infty$-ring
spectra \cite{may-einfty} was available. An $E_\infty$-structure on a
spectrum is a multiplication that is homotopy commutative in a
coherent way. See Chapter 5 of this book
for background on operads and their role in the study of spectra with
additional structure.

There is an explicit comparison of the good old
$E_\infty$-ring spectra and commutative ring spectra, see
\cite[Proposition II.4.5]{EKMM} or \cite[Remark 0.14]{MMSS}, in particular,  
every $E_\infty$-ring spectrum $\tilde{R}$ can be rigidified to a commutative 
ring spectrum $R$ in such a way that the homotopy type is preserved. 

There are several popular $E_\infty$-operads that will show up later: for 
instance the linear isometries operad (see \eqref{eq:call}) and 
the Barratt-Eccles operad. The $n$-ary part of the latter is easy to describe: 
You take $\mathcal{O}(n) = E\Sigma_n$, a contractible space with free $\Sigma_n$-action. 
For compatiblity reasons it is advisible to take the realization of the 
standard simplicial model of $E\Sigma_n$ whose set of $q$-simplices is 
$(\Sigma_n)^{q+1}$. 

An operad with a nice geometric description is the little $m$-cubes operad, 
that in arity $n$ consists of the space of $n$-tuples of  
linearly embedded $m$-cubes in the standard $m$-cube with disjoint interiors 
and with axes parallel to 
that of the ambient cube. \cite[Example 5]{bv}. We call this (and every 
equivalent) operad in spaces $E_m$. For $m=1$ this operad parametrizes 
$A_\infty$-structures and the colimit is an $E_\infty$-operad. Hence the 
intermediate $E_m$'s for $1 < m < \infty$ interpolate between these 
structures, they give $A_\infty$-structures with homotopy-commutative 
multiplications that are coherent up to some order. 
\subsection{Power operations}
The extra structure of an $E_\infty$-ring spectrum gives homology
operations. The general setting allows for $H_\infty$-ring spectra
\cite{hinfty}; for simplicity we assume that $E$ and $R$ are two
$E_\infty$-ring spectra whose structure is given by the Barratt-Eccles
operad, \ie, there are structure maps 
\begin{equation} \label{eq:extendedpowers}
\xi^n_R \colon (E\Sigma_n)_+ \wedge_{\Sigma_n} R^{\wedge n} \ra R
\end{equation}
for $R$ and also for $E$. McClure describes the general setting of
power operations in \cite[IX \S 1]{hinfty}. Fix a prime $p$ and
abbreviate $(E\Sigma_p)_+ \wedge_{\Sigma_p} R^{\wedge p}$ by
$D_p(R)$; $D_pR$ is often known as the \emph{$p$th extended power
  construction on $R$}. A \emph{power operation} assigns to every
class $[x] \in E_nR$ and every class $e \in E_m(D_pS^n)$ a class 
$Q^e[x] \in E_mR$, hence we can view $Q^e$ as a map 
$$ Q^e\colon E_nR \ra E_mR.$$
The construction is as follows. Take a representative $x \colon S^n
\ra E \wedge R$ of $[x]$ and $e \in E_m(D_pS^n)$ and apply the following 
composition to $e$: 
\begin{equation} \label{eq:powerops}
\xymatrix{
{E_m(D_pS^n)} \ar@{.>}[rrdddd] \ar[rr]^{E_m(D_px)} & &  {E_m(D_p(E
  \wedge R))} \ar[d]^{\delta} 
\\
& & {E_m(D_pE \wedge D_pR)} \ar[d]^{E_m(\xi^p_E \wedge \id)}\\
& & {E_m(E \wedge D_pR)} \ar[d]^{\mu_*}\\
& & {E_m(D_pR)} \ar[d]^{E_m(\xi^p_R)}\\
& & {E_m(R).}
}
\end{equation}
Here, 
$$ \delta \colon (E\Sigma_p)_+ \wedge_{\Sigma_p} (E \wedge R)^{\wedge
  p} \ra (E\Sigma_p)_+ \wedge_{\Sigma_p} E^{\wedge p} \wedge 
(E\Sigma_p)_+ \wedge_{\Sigma_p} R^{\wedge p}$$
is the canonical map induced by the diagonal on the space $E\Sigma_p$
and $\mu$ denotes the multiplication in $E$, so it induces
$$ \mu_* \colon \pi_*(E \wedge E \wedge D_pR) \ra \pi_*(E \wedge
D_pR).$$
  
There are several important special cases of this construction: 
\begin{enumerate}
\item
For $E$ the sphere spectrum one obtains operations on the homotopy
groups of an $E_\infty$-ring spectrum, see \cite[IV \S 7]{hinfty}. 
\item
For $E = H\F_p$ the power operations for certain classes $e_i \in
H_i(\Sigma_p; \F_p)$ are often called
(Araki-Kudo-)Dyer-Lashof operations. 
These are natural homomorphisms 
\begin{equation} \label{eq:dl} 
Q^i\colon (H\F_p)_n(R) \ra
(H\F_p)_{n +2i(p-1)}(R) 
\end{equation}
for odd primes and  $Q^i\colon (H\F_2)_n(R) \ra
(H\F_2)_{n + i}(R)$ at the prime $2$ that satisfy a list of axioms
\cite[Theorem III.1.1]{hinfty} and compatibility relations with the
homology Bockstein and the dual Steenrod operations.
\item
There are also important $K(n)$-local versions of such operations and
we will encounter them later.  
\end{enumerate}

\section{Chain algebras and algebras over Eilenberg-Mac\,Lane spectra}
\label{sec:hralgs}
The derived category of a ring is an important object in many
subjects. The initial ring is the ring of integers. Every ring $R$ has
an associated Eilenberg-Mac\,Lane spectrum, $\HR$.

\subsection{$\HR$-module and algebra spectra}
We collect some results that compare
the category of chain complexes of $R$-modules with the category of
module spectra over $\HR$. We start with additive statements and move
to comparison results for flavors of differential graded
$R$-algebras. For an overview of algebraic applications of these
equivalences see for instance \cite{greenlees}.

In the eighties, so before any
strict symmetric monoidal category of spectra was constructed, Alan
Robinson developed the notion of the derived category, $\mathcal{D}(E)$, 
of right 
$E$-module spectra for every $A_\infty$-ring spectrum $E$. He showed
the following result.
\begin{thm} \, \cite[Theorem 3.1]{roderived} \,
For every associative ring $R$ there is an equivalence of categories
between the derived category of $R$, $\mathcal{D}(R)$, and the derived
category of the associated Eilenberg-Mac\,Lane spectrum,
$\mathcal{D}(\HR)$.
\end{thm}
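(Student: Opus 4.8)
The plan is to construct a chain of Quillen equivalences connecting the model category of chain complexes of $R$-modules with the model category of $\HR$-module spectra, and then pass to homotopy categories. I will not try to produce a direct functor between $\mathcal{D}(R)$ and $\mathcal{D}(\HR)$ by hand; instead I will factor the comparison through an intermediate category where the Eilenberg-Mac\,Lane functor becomes most transparent.

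First I would recall the (stable) Dold-Kan type correspondence: for a commutative ring $R$, the category of (unbounded) chain complexes of $R$-modules is Quillen equivalent to the category of $\HR$-modules in symmetric spectra. The cleanest route is Shipley's theorem, which upgrades the classical Dold-Kan correspondence to the spectral level via a zig-zag of Quillen equivalences passing through $\HR$-modules in simplicial $R$-modules and through symmetric spectra over $\HR$ modelled on $R$-chain complexes; here one uses that $\HR$ is an associative (indeed commutative) symmetric ring spectrum so that the category $\M_{\HR}$ is well-behaved. Since we only need the statement at the level of derived categories and for the \emph{associative} case, it suffices to know that each functor in the zig-zag is a Quillen equivalence of (stable) model categories, so the induced functors on homotopy categories are equivalences; composing them gives the desired equivalence $\mathcal{D}(R) \simeq \mathcal{D}(\HR)$. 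One then checks that this equivalence is triangulated and $\pi_*$-compatible, i.e. that under it a chain complex $M_*$ corresponds to an $\HR$-module $X$ with $\pi_n X \cong H_n(M_*)$, which is immediate from the way the comparison functors are built from the classical Eilenberg-Mac\,Lane and normalization functors.

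The alternative, more elementary approach that matches Robinson's era better: show directly that $\HR$ together with its $A_\infty$-module category satisfies the hypotheses of a Morita/tilting-type recognition theorem. Concretely, one identifies $\HR$ as a compact generator of $\mathcal{D}(\HR)$ whose endomorphism ring spectrum $F_{\HR}(\HR,\HR) \simeq \HR$ has homotopy a (graded) ring concentrated in degree zero equal to $R$; a general recognition result (Schwede-Shipley style) then says that $\mathcal{D}(\HR)$ is equivalent to the derived category of the endomorphism DGA, which in this case is formal and quasi-isomorphic to $R$ in degree zero, hence to $\mathcal{D}(R)$. For an associative $R$ one has to be careful that the relevant monoidal/enrichment structure only uses $\HR$ as an $S$-algebra, not a commutative one, which is fine.

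The main obstacle I expect is \emph{not} the abstract machinery but the bookkeeping of which model structures and which version of spectra make every step a genuine Quillen equivalence: one needs the stable model structure on unbounded chain complexes (projective or the one with quasi-isomorphisms as weak equivalences), a compatible model structure on $\HR$-modules, and a symmetric monoidal Dold-Kan functor — the last point is exactly where the subtlety lies, since the naive normalization functor is only lax monoidal and one must either restrict to the module (non-monoidal) statement, which is all that is claimed here, or invoke a more careful argument. Since the theorem as stated concerns only module categories and only associative $R$, I would emphasize that the monoidal refinement can be postponed, and the module-level zig-zag suffices. A secondary technical point is handling unbounded complexes (so that $\mathcal{D}(R)$ is the full unbounded derived category) rather than just connective ones; this requires the stable, not merely the projective, model structure and the corresponding $\Omega$-spectrum level fibrancy on the spectral side, but presents no conceptual difficulty once the connective case is in hand, by a standard colimit/shift argument.
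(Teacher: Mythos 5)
The paper does not give a proof of this statement: it is a survey that quotes Robinson's Theorem~3.1 from \cite{roderived} and then immediately records, as a \emph{subsequent strengthening}, the Schwede--Shipley Quillen equivalence \cite[Theorem 5.1.6]{schwede-shipleydgmod}. Both of your proposed routes appeal precisely to that later machinery (Shipley's Dold--Kan zig-zag, or the Schwede--Shipley ``compact generator with discrete endomorphism ring'' recognition theorem), which the paper explicitly frames as a refinement of Robinson's result, not as its proof. So what you are offering is a modern re-derivation, not a reconstruction of the argument the paper is pointing to, and it has a whiff of circularity if the aim were to establish the result from first principles.

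Robinson's original 1987 argument is genuinely different in kind. At the time there was no symmetric monoidal category of spectra, no category $\M_{\HR}$ in the EKMM or symmetric-spectra sense, and a fortiori no Morita/tilting theory for ring spectra. Robinson first \emph{defines} the derived category $\mathcal{D}(E)$ of right $A_\infty$-module spectra over an $A_\infty$-ring spectrum $E$, and for $E=\HR$ builds a direct comparison functor from chain complexes of $R$-modules to $\HR$-module spectra using an explicit generalized Eilenberg--Mac\,Lane / realization construction, then checks by hand that it induces an equivalence of the relevant homotopy categories. Your second sketch (compact generator with formal endomorphism spectrum) does capture the conceptual core of why the result holds, and is closer in spirit to what Robinson is doing than your first sketch; but it replaces Robinson's explicit functor with an abstract recognition principle that postdates him by over a decade. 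If your goal is to internalize the statement, your argument is fine; if your goal is to reproduce the cited proof, you should instead construct the Eilenberg--Mac\,Lane functor on chain complexes directly and verify the equivalence without presupposing a Quillen-equivalence framework for module spectra.

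One smaller point worth flagging: your first paragraph begins ``for a commutative ring $R$,'' but the theorem is stated for associative $R$, and the route through Shipley's \emph{commutative} $\HR$-algebra comparison is irrelevant here; what you want is the additive Schwede--Shipley module statement, which you do eventually invoke. Be careful not to let the commutative hypothesis sneak in, since none of the monoidal structure is needed for the module-level equivalence.
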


Later, in the context of $S$-modules this corresponds to \cite[IV,
Theorem 2.4]{EKMM}. Work of Schwede and Shipley
strengthened the result to a Quillen equivalence of the corresponding model
categories:
\begin{thm} \, \cite[Theorem 5.1.6]{schwede-shipleydgmod} \,
The model category of unbounded chain complexes of $R$-modules is Quillen
equivalent to the model category of $\HR$-module spectra.
\end{thm}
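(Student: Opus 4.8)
The plan is to upgrade the classical equivalence $\mathcal{D}(R) \simeq \mathcal{D}(\HR)$ to the level of model categories by exhibiting an explicit Quillen adjunction and checking it is a Quillen equivalence. First I would fix cofibrantly generated model structures on both sides: on the category $\mathsf{Ch}(R)$ of unbounded chain complexes of (left, say) $R$-modules the standard projective model structure (weak equivalences the quasi-isomorphisms, fibrations the surjections), and on $\M_{\HR}$ the model structure induced from one of the symmetric monoidal categories of spectra discussed above (say the one from \cite{EKMM} or \cite{HSS}). The key is to produce a Quillen functor between them. The natural source is the Eilenberg--Mac\,Lane functor: there is a lax symmetric monoidal functor $H$ from chain complexes of abelian groups to $\Sp$ (a spectral analogue of the Dold--Kan correspondence), and applying it to a chain complex of $R$-modules produces an $\HR$-module spectrum. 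To get a left Quillen functor one instead considers its left adjoint, or rather one realizes that $H\colon \mathsf{Ch}(\Z) \to \Sp$ itself is the right adjoint of a Quillen adjunction whose left adjoint I will call $L$; then one checks that $L$ and $H$ lift to the module categories over $R$ and $\HR$ respectively.

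The main structural steps are then: (1) verify that $(L, H)$ is a Quillen adjunction at the level of $\M_{\HR} \rightleftarrows \mathsf{Ch}(R)$, i.e.\ that $H$ preserves fibrations and acyclic fibrations --- this is essentially formal once $H$ is built from levelwise constructions, since fibrations and weak equivalences are detected on underlying objects; (2) show the derived unit and counit are weak equivalences. For (2) the efficient route is to reduce to the generators. Both model categories are cofibrantly generated and in fact compactly generated by a single object: $R$ itself (in degree $0$) generates $\mathcal{D}(R)$, and the free $\HR$-module $\HR$ generates $\mathcal{D}(\HR)$. One shows that $L$ sends $R[0]$ to something weakly equivalent to $\HR$ and that the induced map on mapping spectra / on $\pi_*$ is an isomorphism --- concretely, $\pi_*$ of the derived mapping spectrum $F_{\HR}(\HR, \HR[n]) \cong \pi_{-n}(\HR) = R$ concentrated in degree $0$, matching $\mathrm{Ext}^*_{\mathsf{Ch}(R)}(R, R[n])$. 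Because both sides are stable and the adjunction respects the triangulated (suspension) structure, a derived functor that is an equivalence on a compact generator and preserves coproducts is an equivalence, and then the Quillen adjunction whose derived functors form an equivalence of homotopy categories is automatically a Quillen equivalence.

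The hard part will be the construction and bookkeeping of the spectral Eilenberg--Mac\,Lane functor $H$ in a way that is genuinely lax symmetric monoidal and compatible enough with the smash product to pass to $\HR$-modules, and keeping track of the cofibrancy/flatness hypotheses needed so that the point-set level functors compute the derived functors --- e.g.\ one must either replace $\HR$ by a flat model or restrict to cofibrant objects, and one must know that $L$ applied to a cofibrant chain complex lands in a cofibrant (or at least well-behaved) $\HR$-module so that no extra derived correction is needed when comparing smash products. This is exactly the technical heart of \cite{schwede-shipleydgmod}: the rest (detecting fibrations on underlying objects, reducing to a compact generator, invoking stability) is standard model-category formalism. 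A clean way to organize it is to first treat the case $R = \Z$ (comparing $\mathsf{Ch}(\Z)$ with $\HZ$-modules) and then bootstrap to general $R$ by observing that $\HR$-modules are the same as modules over the monoid $\HR$ in $\HZ$-modules, while $\mathsf{Ch}(R)$-complexes are modules over $R$ in $\mathsf{Ch}(\Z)$, and module categories transport along Quillen equivalences of the base under mild hypotheses.
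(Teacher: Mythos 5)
Your outline is plausible, but it takes a genuinely different route from the cited proof. Theorem 5.1.6 of Schwede--Shipley (\emph{Stable model categories are categories of modules}) is an \emph{application} of their abstract Morita/tilting theory: their main theorem says that a stable, cofibrantly generated model category with a compact generator $G$ is Quillen equivalent to the category of modules over the endomorphism ring spectrum $\mathrm{End}(G)$; they apply this with $\C = \mathsf{Ch}(R)$ and $G = R[0]$, and then identify $\mathrm{End}(R[0])$ with $\HR$ by observing that its homotopy is $R$ concentrated in degree $0$, so that their comparison criterion (a ring spectrum with a $\pi_*$-isomorphism to the endomorphism ring spectrum induces a Quillen equivalence of module categories) finishes the argument. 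Your plan instead constructs an explicit Eilenberg--Mac\,Lane functor between the two module categories and verifies the Quillen equivalence directly on a compact generator; that is closer in spirit to Schwede's $\Gamma$-space paper and especially to Shipley's later \emph{$\HZ$-algebra spectra are differential graded algebras}, which is cited separately in the survey for the multiplicative comparison. Both strategies are legitimate; the cited one trades an explicit functor for a ready-made abstract theorem, while yours buys a more concrete zig-zag at the cost of serious bookkeeping.

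Two points in your sketch need repair. First, there is no single-step Quillen adjunction $\M_{\HR} \rightleftarrows \mathsf{Ch}(R)$ with the lax monoidal Eilenberg--Mac\,Lane functor $H$ as right adjoint in one leap: Dold--Kan and the loop/suspension bookkeeping force one through intermediate categories (simplicial $R$-modules, non-negatively graded complexes, symmetric spectra in simplicial abelian groups, and so on), and the actual comparison in Shipley's paper is a \emph{chain} of Quillen equivalences, not one. Second, you introduce $L$ as the left adjoint of $H\colon \mathsf{Ch}(\Z)\to\Sp$, so $L$ goes from spectra to chains, but then write that one shows ``$L$ sends $R[0]$ to something weakly equivalent to $\HR$''; the domains and codomains are swapped there, and for the generator computation you in fact want to check that the functor going from chains to spectra sends $R[0]$ to $\HR$ (equivalently, that the derived endomorphism spectrum of $R[0]$ in either model has homotopy $R$ in degree $0$ and nothing else). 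With those corrections the strategy is coherent, but the ``hard part'' you flag is where essentially all the work lives.
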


Stefan Schwede uses the setting of $\Gamma$-spaces \cite{schwede-gamma} to
embed simplicial rings and modules into the stable world: He constructs a lax
symmetric monoidal Eilenberg-Mac\,Lane functor $H$ from simplicial abelian
groups to $\Gamma$-spaces together with a linearization functor $L$ in the
opposite direction and proves the following comparison result:
\begin{thm} \, \cite[Theorems 4.4, 4.5]{schwede-gamma} \,
If $R$ is a simplicial ring, then the adjoint functors $H$ and $L$
constitute a Quillen equivalence between the categories of simplicial
$R$-modules and $\HR$-module spectra. If $R$ is in addition commutative, then 
$H$ and $L$ induce a Quillen equivalence between the categories of simplicial
$R$-algebras and $\HR$-algebra spectra.
\end{thm}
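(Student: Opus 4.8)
The plan is to establish the module-level statement first and then bootstrap to associative algebras by a monadic argument. Write $L$ for the left adjoint of $H$, so that $H\colon s\M_R\to\M_{\HR}$ is a right adjoint and $L\colon\M_{\HR}\to s\M_R$ is its linearization. By construction $\pi_n(HA)\cong\pi_n(A)$ for every simplicial abelian group $A$, so $H$ preserves \emph{and reflects} all weak equivalences; together with the (routine) verification that $(L\dashv H)$ is a Quillen adjunction with $L$ left Quillen, this puts us in position to apply the standard recognition criterion for Quillen equivalences. Since moreover every simplicial $R$-module is fibrant, so that $HLN$ is fibrant for cofibrant $N$, it then suffices to show that the derived unit $\eta_N\colon N\to HLN$ is a weak equivalence for every cofibrant $\HR$-module $N$.

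I would prove this by cell induction. The class of $\HR$-modules $N$ with $\eta_N$ a weak equivalence is closed under weak equivalences, retracts, homotopy pushouts along cofibrations, sequential homotopy colimits and coproducts, because $L$ is left Quillen and $H$ --- being, after passing to spectra, the right adjoint of a stable adjunction which is moreover exact and coproduct-preserving --- takes these homotopy colimits of $\HR$-modules to the corresponding homotopy colimits of simplicial $R$-modules. Hence it suffices to check $\eta_N$ for $N$ ranging over the domains and codomains of generating cofibrations of $\M_{\HR}$, which are built from the shifted free modules $\Sigma^n\HR$; since $L$ commutes with suspension up to weak equivalence, it suffices to treat $N=\HR$. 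As $\HR=H(R)$, the unit $\HR\to HL(\HR)$ is obtained by applying $H$ to the counit $\epsilon_R\colon LHR\to R$ of the adjunction on simplicial abelian groups, and because $H$ reflects weak equivalences, $\eta_{\HR}$ is a weak equivalence iff $\epsilon_R$ is. Thus everything reduces to the sub-claim that $\epsilon_A\colon LHA\to A$ is a weak equivalence for every simplicial abelian group $A$.

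This sub-claim is the genuinely homotopy-theoretic input, and I expect it to be the main obstacle; everything else is formal propagation. One uses Dold--Kan: up to weak equivalence every simplicial abelian group is built from the Eilenberg--MacLane objects $\mathbb{Z}[k]$ (the group $\mathbb{Z}$ placed in simplicial degree $k$) by wedges, suspensions and Postnikov-type extensions, and both $L$ and $H$ respect this structure up to weak equivalence --- in particular $H(\mathbb{Z}[k])\simeq\Sigma^k\HZ$, and $L$ preserves the relevant homotopy colimits and suspensions. This reduces the sub-claim to $A=\mathbb{Z}$, i.e. to $LH\mathbb{Z}\simeq\mathbb{Z}$. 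Here $H\mathbb{Z}$ is by construction the $\Gamma$-space $n_+\mapsto\widetilde{\mathbb{Z}}[n_+]$, and $LH\mathbb{Z}$ is read off from the point-set definition of $L$; the content is the group-completion/Barratt--Priddy--Quillen-type fact that linearising the Eilenberg--MacLane $\Gamma$-space of $\mathbb{Z}$ returns $\mathbb{Z}$ --- equivalently, that $\HZ$-module spectra are generated by the free module on $S^0$, whose endomorphism ring matches $\mathbb{Z}$. Pinning down $L$ on free $\Gamma$-spaces and carrying out this one computation is where essentially all the real work lies; granting it, the module statement is proved.

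For the algebra statement, assume $R$ commutative. Then $s\M_R$ and $\M_{\HR}$ are symmetric monoidal (under $\otimes_R$ and $\wedge_{\HR}$), and the module Quillen equivalence is monoidal: the lax structure maps $HA\wedge_{\HR}HB\to H(A\otimes_R B)$ are weak equivalences on cofibrant inputs, which one checks on generators --- $\HR\wedge_{\HR}\HR=\HR=H(R\otimes_R R)$ --- and propagates by the same cell induction, so that $L$ is derived oplax monoidal. Consequently $H$ carries the associative tensor-algebra monad $T_{\HR}=\bigvee_{n\ge0}(-)^{\wedge_{\HR}n}$ to $T_R=\bigvee_{n\ge0}(-)^{\otimes_R n}$ up to weak equivalence, the adjunction $(L\dashv H)$ lifts to the categories of associative algebras compatibly with the forgetful functors, and one right-induces model structures on both algebra categories from the module model structures --- on the spectrum side using a positive-type model structure on $\M_{\HR}$ for which right-induction to associative algebras is valid, exactly as in Section \ref{sec:modelcats}, and on the simplicial side the usual model structure on simplicial $R$-algebras. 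On algebras, $H$ is again right Quillen and still reflects all weak equivalences, since weak equivalences of algebras are created on underlying modules; and the derived unit of a cofibrant $\HR$-algebra, computed on underlying modules after resolving by free algebras, is the module-level derived unit applied to $T_{\HR}$ of a cofibrant module, hence a weak equivalence by the module case together with the derived monoidality of $L$. Therefore the algebra-level derived unit is a weak equivalence, $(L\dashv H)$ is a Quillen equivalence on associative algebras, and the proof is complete.
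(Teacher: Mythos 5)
The first thing to say is that the survey contains no proof of this statement: it is quoted verbatim from Schwede's $\Gamma$-space paper \cite[Theorems 4.4, 4.5]{schwede-gamma}, so your argument can only be judged against that source and on its own merits. Your overall route --- prove the module statement by checking the derived unit on a generator and propagating by cell induction using that both homotopy categories are detected by homotopy groups, then lift to associative algebras via a monoidal comparison of the free-algebra monads --- is a sensible modern reconstruction; it is essentially the strategy later axiomatized by Schwede--Shipley \cite{schwede-shipleydk}, and the module half is sound in outline.

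Two corrections of emphasis, one of which conceals the only step I would call a genuine risk. On the module side you have mislocated the work: no Barratt--Priddy--Quillen-type input is needed for $LH\mathbb{Z}\simeq \mathbb{Z}$, because $L$ is left inverse to $H$ on the nose \cite[Lemma 2.1]{schwede-gamma} (this is quoted in the survey immediately after the theorem), so the unit at the cofibrant generator $\HR$ is an isomorphism by the triangle identity. Consequently your sub-claim that $\epsilon_A\colon LHA\to A$ is a weak equivalence for \emph{all} simplicial abelian groups $A$ is both unnecessary (only $A=R$, where $HR$ is cofibrant as an $\HR$-module, enters the argument) and, as a strategy, shaky: $HA$ is not cofibrant for general $A$, so ``$L$ preserves the relevant homotopy colimits'' cannot legitimately be applied to $H$ of a Dold--Kan cell decomposition of $A$. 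On the algebra side, where the real content of the theorem sits, your sketch compresses the delicate point into a clause: since $H$ is only lax monoidal, its left adjoint $L$ is merely oplax (comonoidal), so $L$ itself does \emph{not} carry $\HR$-algebras to simplicial $R$-algebras; the algebra-level left adjoint is a different functor, constructed from free presentations, and your assertion that its derived unit on a cofibrant $\HR$-algebra ``is the module-level derived unit applied to $T_{\HR}$ of a cofibrant module'' is precisely the technical heart --- it needs a cell filtration of cofibrant algebras by pushouts along free maps and a comparison of the two free-algebra functors, which is exactly what the weak monoidal Quillen pair machinery of \cite{schwede-shipleydk} (or Schwede's original bespoke argument) supplies. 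Finally, the appeal to a ``positive-type model structure'' is misplaced: positivity is an issue for \emph{commutative} monoids, whereas the theorem concerns associative $\HR$-algebras, for which the monoid axiom for $\Gamma$-spaces \cite{lydakis,schwede-gamma} already yields the right-induced model structure.
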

Here, the functor $L$ is actually left inverse to $H$ and induces an 
isomorphism of $\Gamma$-spaces 
$$ \Hom(HA, HB) \cong H(\Hom_{\mathsf{sAb}}(A, B))$$ 
\cite[Lemma 2.1]{schwede-gamma}, thus $H$ embeds algebra into brave new 
algebra.

Brooke Shipley extends this equivalence to corresponding categories
of monoids in the differential graded setting:
\begin{thm} \, \cite[Theorem 1.1]{shipley} \,
For any commutative ring $R$, the model categories of
unbounded differential graded $R$-algebras and $\HR$-algebra spectra are
Quillen equivalent.
\end{thm}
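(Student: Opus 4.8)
The plan is to build the Quillen equivalence in two stages, reducing the differential graded statement to the simplicial statement that is already available via Schwede's $\Gamma$-space machinery, and then feeding in Shipley's own comparison between simplicial and differential graded algebras. First I would recall the chain of Quillen equivalences
\[
\mathsf{dgAlg}_R \ \simeq\ \mathsf{sAlg}_{R} \ \simeq\ \HR\text{-alg},
\]
where the second equivalence is exactly the commutative case of the preceding theorem (\cite[Theorems 4.4, 4.5]{schwede-gamma}), once one identifies $\Gamma$-ring models of $\HR$-algebras with the relevant symmetric or orthogonal $\HR$-algebra spectra via the comparison results of \cite{MMSS,schwede-comp} quoted earlier. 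So the real content is the first equivalence, between unbounded differential graded $R$-algebras and simplicial $R$-algebras, for a general (not necessarily bounded) differential graded algebra.

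The second step is to set up that first equivalence. On the module level it is classical: the normalization functor $N$ and the Dold-Kan denormalization functor $\Gamma$ give a Quillen equivalence between simplicial $R$-modules and \emph{connective} chain complexes, and there is a stabilization/spectrification procedure that extends this to unbounded chain complexes. The monoidal upgrade requires the Eilenberg-Zilber and Alexander-Whitney maps, which make $N$ lax symmetric monoidal and $\Gamma$ lax monoidal; these are not mutually inverse, so one does not get a strict equivalence of categories of monoids but only a Quillen equivalence after passing through a chain of intermediate model categories (truncations, connective covers, and a left Bousfield localization to invert the stabilization maps). I would organize this as: (i) handle the connective case by transporting the monoid model structures along $N \dashv \Gamma$, using that $N$ is lax symmetric monoidal and weakly monoidal in the appropriate sense; (ii) pass from connective to unbounded differential graded $R$-algebras by a colimit/stabilization argument, checking that this is compatible with the analogous stabilization on the spectral side; and (iii) assemble the composite and verify it is a Quillen equivalence by the usual criterion (the derived unit and counit are weak equivalences, which one can check on underlying modules since the model structures on algebras are right-induced).

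The main obstacle is the monoidal bookkeeping in the unbounded, non-commutative setting: the Dold-Kan correspondence is only lax (not strong) monoidal, so one cannot simply drag monoid structures across, and one must instead work with a zig-zag of Quillen equivalences and show each step preserves and reflects the relevant homotopical data on algebras. Concretely, the delicate point is ensuring that the right-induced model structures on monoids exist at every stage (for which the monoid axiom and suitable cofibrant generation are needed, as discussed for commutative monoids earlier in this excerpt) and that the stabilization step genuinely induces an equivalence on algebras rather than merely on modules. Once those compatibilities are in place, the statement follows by concatenating the module-level Dold-Kan equivalence, its monoidal refinement for algebras, the stabilization to unbounded complexes, and Schwede's $\Gamma$-space equivalence $\mathsf{sAlg}_R \simeq \HR$-alg, with the EKMM/symmetric/orthogonal comparison theorems translating between the various models of $\HR$-algebra spectra.
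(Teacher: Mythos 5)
Your plan $\mathsf{dgAlg}_R \simeq \mathsf{sAlg}_R \simeq \HR\text{-alg}$ has a structural problem: the middle term is inherently connective, so it cannot serve as a bridge to \emph{unbounded} dg $R$-algebras. Dold--Kan identifies simplicial $R$-modules with \emph{non-negatively graded} chain complexes only, and Schwede's $\Gamma$-space result identifies simplicial $R$-algebras with $\HR$-algebra spectra \emph{in the $\Gamma$-space model}, which again only sees connective objects. Meanwhile, $\HR$-module spectra in symmetric spectra (or $S$-modules) are genuinely unbounded: the derived category $\mathcal{D}(\HR)$ contains objects with homotopy in arbitrarily negative degrees. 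So the two equivalences you want to concatenate live entirely in the connective world, and no amount of ``stabilization on both sides'' will recover the unbounded statement, because the simplicial-algebra side has nothing to stabilize against --- there is no simplicial analogue of a non-connective dg algebra. The ``stabilization step'' you gesture at is not a compatibility check; it is the entire content of the theorem, and your framework does not provide a place to carry it out.

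Shipley's actual proof avoids this by never reducing to simplicial algebras as the pivot. Instead she works with \emph{symmetric spectrum objects} in several ground categories --- simplicial $R$-modules, non-negatively graded chain complexes, and unbounded chain complexes --- and produces a zig-zag of weak monoidal Quillen equivalences between these spectrum categories, invoking the Schwede--Shipley machinery from \cite{schwede-shipleydk} to lift each one to a Quillen equivalence on categories of monoids. The crucial feature is that the comparison between symmetric spectra in \emph{unbounded} chain complexes and plain unbounded chain complexes is an equivalence (because the suspension functor is already invertible there), which is how non-connective information survives. Your proposal also waves at verifying the derived unit/counit ``on underlying modules since the model structures on algebras are right-induced,'' but this shortcut is not available: the Dold--Kan functors are only lax monoidal in opposite directions (via Eilenberg--Zilber and Alexander--Whitney), so the module-level equivalence does not automatically descend to monoids --- precisely the obstruction that the weak-monoidal-Quillen-equivalence formalism is designed to handle and that your write-up defers to ``bookkeeping.''
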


Dugger and Shipley show in \cite{DS} that there are examples of
$\HR$-algebras that are weakly
equivalent as $S$-algebras, but that are not quasi-isomorphic.  A concrete
example is the differential graded ring $A_*$ which is generated by an
element in degree $1$, $e_1$, and has $d(e_1) = 2$ and satisfies $e_1^4=0$.
The corresponding $\HZ$-algebra spectrum is equivalent as a ring spectrum
to the one on the exterior algebra 
$B_* = \Lambda_{\F_2}(x_2)$ (with $|x_2|=2$) but $A_*$ and
$B_*$ are \emph{not}
quasi-isomorphic. You find more examples and proofs in \cite[\S\S 4,5]{DS}.

We cannot expect that commutative $\HR$-algebra spectra correspond to
commutative differential graded $R$-algebras unless $R$ is of characteristic
zero, because of cohomology operations, but we get the following result:

\begin{thm} \, \cite[Corollary 8.3]{rs} \,
Let $R$ be a commutative ring. Then there is a chain of Quillen
equivalences between the model category of commutative 
$\HR$-algebra spectra and $E_\infty$-monoids in the category of
unbounded $R$-chain complexes. 
\end{thm}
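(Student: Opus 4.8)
The plan is to bootstrap from Shipley's associative-case equivalence (Theorem~\cite[Theorem 1.1]{shipley}, already quoted above) by upgrading it to a multiplicative comparison that respects $E_\infty$-structures, and then transporting $E_\infty$-monoids along it. The key technical input is that the Dold--Kan-type functors relating unbounded $R$-chain complexes $\mathrm{Ch}(R)$ and $\HR$-module spectra constitute a \emph{weak symmetric monoidal} Quillen equivalence: one has a lax symmetric monoidal functor $H$ in one direction (extending Schwede's Eilenberg--Mac\,Lane $\Gamma$-space functor to the unbounded dg setting) and, while its left adjoint $L$ is only oplax symmetric monoidal, the relevant natural maps become weak equivalences on cofibrant objects. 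Granting this, the general machinery for transporting algebras over an operad along weak monoidal Quillen equivalences (in the spirit of \cite[\S 3]{schwede-shipleydgmod} applied to operads, or the Schwede--Shipley equivalence of \cite{shipley} itself) produces a Quillen equivalence between algebras over any fixed $\Sigma$-cofibrant $E_\infty$-operad $\mathcal{O}$ in $\mathrm{Ch}(R)$ and $\mathcal{O}$-algebras in $\HR$-modules.

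First I would fix a convenient $E_\infty$-operad, say the one underlying the Barratt--Eccles operad, realized in the relevant symmetric monoidal categories, and observe that $\mathcal{O}$-algebras in $\HR$-module spectra are Quillen equivalent to commutative $\HR$-algebra spectra: this is the spectrum-level rectification result (the analogue of \cite[Proposition II.4.5]{EKMM} / \cite[Remark 0.14]{MMSS} relative to the base $\HR$), which holds precisely because we are working in one of the good symmetric monoidal models of spectra where $E_\infty$ rectifies to strictly commutative. Here one must be a little careful with basepoints/units and with the positive (convenient) model structure so that the free commutative $\HR$-algebra functor is homotopically well behaved; Shipley's convenient model structure \cite{shipleyconvenient} is tailor-made for this. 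Second I would set up the chain of Quillen adjunctions $\mathrm{CAlg}(\mathcal{M}_{\HR}) \simeq \mathcal{O}\text{-}\mathrm{Alg}(\mathcal{M}_{\HR})$ and $\mathcal{O}\text{-}\mathrm{Alg}(\mathcal{M}_{\HR}) \simeq \mathcal{O}\text{-}\mathrm{Alg}(\mathrm{Ch}(R))$, the latter by the transported-equivalence argument above, and then note $\mathcal{O}\text{-}\mathrm{Alg}(\mathrm{Ch}(R))$ is by definition the category of $E_\infty$-monoids in unbounded $R$-chain complexes. Composing gives the asserted chain of Quillen equivalences.

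The main obstacle is the middle step: transporting $\mathcal{O}$-algebras along the chain-level/spectrum-level equivalence when the adjunction is only \emph{weakly} symmetric monoidal. Because $L$ fails to be lax symmetric monoidal on the nose, one cannot simply apply a functor levelwise; instead one must check that (i) the induced model structures on $\mathcal{O}$-algebras exist on both sides (using that $\mathcal{O}$ is $\Sigma$-cofibrant and that the underlying monoidal model categories satisfy the monoid axiom and have suitable cofibrant generation — all available for $\mathrm{Ch}(R)$ and for the $R$-model structure on symmetric spectra of \cite{shipleyconvenient}), and (ii) the derived unit and counit are weak equivalences on $\mathcal{O}$-algebras whose underlying objects are cofibrant, which one deduces from the already-known underlying equivalence of \cite{schwede-shipleydgmod} together with a cell-induction argument over the cells of a cofibrant $\mathcal{O}$-algebra, using that pushouts of free $\mathcal{O}$-algebra maps are built from the extended powers $\mathcal{O}(n)_+ \wedge_{\Sigma_n}(-)^{\wedge n}$ and that these are homotopically compatible with $L$ and $H$ on cofibrant inputs. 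A secondary subtlety is keeping track of characteristic: for general $R$ one genuinely needs the \emph{$E_\infty$} (not strictly commutative) side in $\mathrm{Ch}(R)$, exactly because Dyer--Lashof-type operations obstruct rectification of $E_\infty$-dg-algebras to commutative dg-algebras when $R$ is not a $\Q$-algebra — so the statement is sharp and no shortcut through $\mathrm{CAlg}(\mathrm{Ch}(R))$ is available in positive characteristic.
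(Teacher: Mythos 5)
Your proposal takes a genuinely different route from \cite{rs}, and the route has a real gap in the middle step.

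You propose: rectify $E_\infty$ to strictly commutative inside $\HR$-modules (fine), then transport $E_\infty$- (equivalently $\mathcal{O}$-)algebras across Shipley's zig-zag of weak monoidal Quillen equivalences between $\HR$-modules and $\mathrm{Ch}(R)$. The transport half is where the argument breaks. The chain underlying \cite[Theorem~1.1]{shipley} contains a Dold--Kan step comparing simplicial $R$-modules with non-negatively graded chain complexes, and this is a weak monoidal Quillen pair whose comparison maps (shuffle and Alexander--Whitney) are \emph{not} compatible with the symmetry isomorphism. The Schwede--Shipley framework \cite{schwede-shipleydk} (that is the reference you want, not \cite{schwede-shipleydgmod}, which is about tilting) transports \emph{associative} monoids and modules precisely because associativity never interrogates the braiding; it does not transport commutative monoids or $E_\infty$-algebras. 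Your cell induction over extended powers $\mathcal{O}(n)_+\wedge_{\Sigma_n}(-)^{\wedge n}$ would require a $\Sigma_n$-equivariant interchange between tensor powers on the two sides, and this is exactly what the non-symmetric lax structure fails to give. The claim that one has a single lax symmetric monoidal functor $H$ from $\mathrm{Ch}(R)$ to $\HR$-modules inducing the equivalence is also not available: Shipley's comparison is a genuine zig-zag, not a single adjunction with a symmetric monoidal leg.

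The argument in \cite{rs} sidesteps this by never attempting to move $E_\infty$-algebras across the Dold--Kan step. One carries a \emph{strictly commutative} monoid through a chain of the shape
$$\mathcal{C}(\mathcal{M}_{\HR}) \simeq_Q \mathcal{C}\bigl(\Sp^\Sigma(s\mathrm{Mod}_R)\bigr) \simeq_Q \mathcal{C}\bigl(\Sp^\Sigma(\mathrm{ch}^{\geq 0}_R)\bigr) \simeq_Q \mathcal{C}\bigl(\Sp^\Sigma(\mathrm{Ch}_R)\bigr) \simeq_Q E_\infty\text{-mon}(\mathrm{Ch}_R),$$
i.e.\ commutative monoids in symmetric spectra built over the intermediate base categories, using positive model structures throughout. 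The $\Sigma_n$-actions live in the symmetric-sequence direction of the spectrum rather than in the internal tensor power of the base category, so the non-symmetry of Dold--Kan is rendered harmless. Only at the last stage, inside $\mathrm{Ch}_R$, does one translate commutative monoids in $\Sp^\Sigma(\mathrm{Ch}_R)$ into $E_\infty$-monoids in $\mathrm{Ch}_R$, via a general comparison theorem the authors establish for this purpose. Your closing observation about positive characteristic is correct and is exactly why the statement must be phrased with $E_\infty$- rather than commutative dg-algebras; but the transport mechanism you propose to prove it does not run.
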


Haldun \"Ozg\"ur Bay{\i}nd{\i}r shows \cite{bayindir} that one
can find $E_\infty$-differential graded algebras that are not
quasi-isomorphic, but whose corresponding commutative $\HR$-algebra spectra are
equivalent as commutative ring spectra.

\subsection{Cochain algebras}
A prominent class of examples of commutative $\HR$-algebra spectra
consists of function spectra $F(X_+, \HR)$. Here, $X$ is an
arbitrary space and $R$ is a commutative ring. The diagonal 
$\Delta \colon X \ra X \times X$ and the
multiplication on $\HR$, $\mu_{\HR}$, induce a multiplication
$$ \xymatrix{
F(X_+, \HR) \wedge F(X_+, \HR) \ar[r]  & F(X_+ \wedge X_+, \HR \wedge
\HR) \cong F((X \times X)_+, \HR \wedge \HR) \ar[d]^{\Delta^*, \mu_{\HR}} \\
 &  F(X_+, \HR)
} $$
that turns $F(X_+, \HR)$ into a $\HR$-algebra spectrum.
As the diagonal is cocommutative and as $\mu_{\HR}$ is commutative, the
resulting multiplication is commutative.

These function spectra are models for the singular cochains of a space
$X$ with coefficients in $R$:
$$ \pi_*(F(X_+, \HR)) \cong H^{-*}(X; R).$$
Beware that the homotopy groups of $F(X_+, \HR)$ are concentrated in
non-positive degrees -- \ie, $F(X_+, \HR)$ is coconnective.

Studying the singular cochains of a space $S^*(X; R)$ as a
differential graded
$R$-module is not enough in order to recover the homotopy
type of $X$. If we work over the rational numbers, then Quillen showed
that rational homotopy theory is algebraic in the sense that one can
use rational differential graded Lie algebras or coalgebras as models
for rational homotopy theory \cite{Q-qhtp}. Sullivan
\cite{sullivan} constructed a functor, assigning a rational
differential graded commutative algebra to a space, that is closely
related to the singular cochain functor with rational
coefficients. He used this to classify rational homotopy types.

For a general commutative ring $R$, the singular cochains are an
$E_\infty$-algebra. Mike Mandell proves \cite[Main Theorem]{mandell-cochains}
that the singular cochain
functor with coefficients in an algebraic closure  of $\F_p$,
$\bar{\F}_p$, induces
an equivalence between the homotopy category of connected $p$-complete
nilpotent spaces of finite $p$-type and a full subcategory of the
homotopy category of $E_\infty$-$\bar{\F}_p$-algebras. He also
characterizes those  $E_\infty$-$\bar{\F}_p$-algebras that arise as
cochain algebras of $1$-connected $p$-complete spaces of finity
$p$-type explicitly \cite[Characterization
Theorem]{mandell-cochains}. There is also an integral version of this
result, stating that finite type nilpotent spaces are weakly equivalent if and
only if their $E_\infty$-algebras of integral cochains are
quasi-isomorphic \cite[Main Theorem]{mandell-integral}.

\section{Units of ring spectra and Thom spectra} \label{sec:thom}
One construction that can give rise to highly structured
multiplications on a spectrum is the Thom spectrum construction:
For instance, complex bordism, $MU$, obtains a commutative ring structure this
way. Early on Mahowald emphasized \cite{mahowald} that multiplicatve
properties   
of the structure maps for Thom spectra translate to multiplicative structures 
on the resulting Thom spectra. Their properties and the corresponding 
orientation theory is systematically studied in
\cite{may-einfty}. There is the following general result by Lewis: 
\begin{thm} \,  \cite[Theorem IX.7.1 and Remark IX.7.2]{lms} \,
Assume that $f$ is a map of spaces from $X$ to the classifying space for stable
spherical fibrations, $BG$, that is a $\mathcal{C}$-map for some
operad $\mathcal{C}$ over the linear isometries operad. 
Then the Thom spectrum $M(f)$ associated to $f$ carries
a $\mathcal{C}$-structure.
In particular, infinite loops maps from $X$ to $BG$ give
rise to $E_\infty$-ring spectra.
\end{thm}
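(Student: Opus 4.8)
The plan is to realize the Thom spectrum construction $M$ as a functor that carries operad actions to operad actions, the relevant operad being the linear isometries operad $\call$. Concretely, I would fix a rigid model for the Thom spectrum functor in the framework of \cite{lms}: a map $f\colon X\to BG$ corresponds to a stable spherical fibration over $X$, and $M(f)$ is its Thom spectrum, built via the twisted half-smash product. The key structural fact to extract is that this functor is \emph{lax monoidal over $\call$}: writing $\theta_n\colon \call(n)\times BG^{n}\to BG$ for the operadic Whitney-sum pairing (discussed below), there is for each $n$ and each family $f_i\colon X_i\to BG$ a natural map
\[
\call(n)_+ \wedge_{\Sigma_n}\bigl(M(f_1)\wedge\cdots\wedge M(f_n)\bigr)\;\lra\; M\bigl(\theta_n\circ(\id_{\call(n)}\times f_1\times\cdots\times f_n)\bigr),
\]
compatible with operad composition, with units, and with the $\Sigma_n$-actions. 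Intuitively, smashing Thom spectra is the Thom spectrum of the external fibrewise smash product of the spherical fibrations, and the $\call(n)$ coordinate is dragged along as an untwisted free factor; that the displayed arrow is in fact a homeomorphism is one of the main payoffs of the twisted half-smash product formalism of \cite[Chs.\ VI, IX]{lms}.

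Next I would recall the $\call$-algebra structure on $BG$. Whitney sum of stable spherical fibrations is homotopy-associative and homotopy-commutative; to rigidify it to a genuine operad action one uses that direct-sum decompositions $U\cong U^{n}$ of a fixed universe $U$ are parametrized exactly by the spaces $\call(n)=\mathcal{L}(U^{n},U)$ of linear isometric embeddings. This is precisely why the \emph{linear} isometries operad --- rather than some other $E_\infty$-operad --- is the natural one here, and it yields structure maps $\theta_n$ making $BG$ an $\call$-algebra, strictly compatible with the lax monoidal structure on $M$ above.

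Finally I would assemble the pieces. Because $\mathcal{C}$ is an operad over $\call$, the map $\mathcal{C}\to\call$ makes $BG$ a $\mathcal{C}$-algebra, and by hypothesis $f\colon X\to BG$ is a map of $\mathcal{C}$-algebras; that is, the $\mathcal{C}$-structure maps $\mathcal{C}(n)\times X^{n}\to X$ lie over $\mathcal{C}(n)\times BG^{n}\to\call(n)\times BG^{n}\to BG$, the second map being $\theta_n$. Applying $M$ over $BG$ to these maps and combining with the lax monoidal structure (pulled back along $\mathcal{C}(n)\to\call(n)$) produces structure maps
\[
\xi^{n}\colon \mathcal{C}(n)_+\wedge_{\Sigma_n}M(f)^{\wedge n}\;\lra\; M(f).
\]
The unit, associativity, and equivariance axioms for a $\mathcal{C}$-spectrum then follow from functoriality of $M$, the operad axioms for $\mathcal{C}$, compatibility of the lax monoidal structure with operadic composition, and the $\mathcal{C}$-algebra axioms satisfied by $f$. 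Since $\call$ is itself an $E_\infty$-operad and infinite loop spaces and maps are modelled by $\call$-algebras and $\call$-maps (via the recognition principle, cf.\ \cite{may-einfty}), the last assertion is the special case $\mathcal{C}=\call$.

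The hard part is entirely point-set-theoretic. One must produce a single coherent package --- a model of $BG$, of the universal spherical fibration $\gamma$, and of the Thom spectrum functor --- for which $\call$ acts on $BG$ on the nose, $M$ is strictly lax monoidal over $\call$, and, crucially, the \emph{diagonal} identification relating the iterated Whitney sum $f^{*}\gamma^{\boxplus n}$ (pulled back along $\mathcal{C}(n)\times X^{n}\to BG$) to the external smash power of $f^{*}\gamma$ holds with all coherences. This is exactly what the twisted half-smash product machinery is designed to supply; once it is in place, the operadic bookkeeping layered on top is routine, and that is the division of labour I would follow.
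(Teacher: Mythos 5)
The paper does not actually prove this theorem; it is stated purely as a citation to Lewis's Theorem IX.7.1 (and Remark IX.7.2) in Lewis--May--Steinberger \cite{lms}. Your sketch is, in outline, exactly Lewis's argument: the Thom spectrum functor is compatible with (external) smash products, $BF$ is an $\call$-algebra with structure maps $\theta_n$ parametrized by linear isometries $U^n\to U$, and a $\C$-map $f\colon X\to BF$ (for an operad $\C$ over $\call$) gives rise to the $\C$-algebra structure on $M(f)$ by applying $M$ to the $\C$-structure maps on $X$ and internalizing with the twisted half-smash product. So the approach is the right one.

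One notational imprecision worth flagging: in the display you write the structure map as
\[
\call(n)_+ \wedge_{\Sigma_n}\bigl(M(f_1)\wedge\cdots\wedge M(f_n)\bigr),
\]
but in the LMS framework spectra live over indexing universes, and the $n$-fold (external) smash $M(f_1)\wedge\cdots\wedge M(f_n)$ is indexed on $U^n$. The operator you need is the \emph{twisted half-smash product} $\call(n)\ltimes_{\Sigma_n}(-)$, which re-indexes from $U^n$ to $U$ using the linear isometries parametrized by $\call(n)$; the plain $\wedge_{\Sigma_n}$ with a pointed space would leave you on the wrong universe and would not give a $\C$-spectrum in the LMS sense. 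You clearly have this in mind (you invoke the twisted half-smash product in the surrounding prose), so this is a notational slip rather than a gap, but it is precisely where the linear-isometries-parametrized structure does its work and is the reason $\C$ must be an operad \emph{over} $\call$. Similarly, ``the displayed arrow is in fact a homeomorphism'' should be read for the pre-$\Sigma_n$-quotient external statement, identifying the twisted half-smash $\call(n)\ltimes\bigl(M(f_1)\wedge\cdots\wedge M(f_n)\bigr)$ with the Thom spectrum of the evident composite to $BF$; the $\Sigma_n$-coinvariants come only when all the $f_i$ are equal and one assembles the operad action. With those clarifications, your division of labour --- twisted half-smash machinery does the heavy lifting, operadic bookkeeping on top --- is exactly the one in \cite[IX.7]{lms}.
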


Note that $BG$ is the classifying space of the units of the sphere
spectrum, $GL_1(S)$. A naive definition of $GL_1(S)$ is given by the
pullback of the diagram
\begin{equation} \label{eq:GL1S}
\xymatrix{
{GL_1S} \ar@{.>}[r] \ar@{.>}[d] & {\Omega^\infty S} \ar[d] \\
{\pi_0(S)^\times = \{ \pm 1\}} \ar[r] & {\pi_0(S) = \mathbb{Z},}
}
\end{equation}
so by the components of $QS^0$ corresponding to $\pm 1 \in \Z$.

In the following we give a short overview of Thom spectra that arise
in a more general context, where the target of the map is the space of
units, $GL_1(R)$,  for a commutative ring spectrum $R$. The first idea is
to define the space $GL_1(R)$ as the space that represents the functor that
sends a space $X$ to the units in $R^0(X)$. Copying the definition from
\eqref{eq:GL1S} above with $S$ replaced by $R$ gives a valid first definition
of $GL_1(R)$ and it was shown \cite{may-einfty} that for commutative $R$
this model is an $E_\infty$-space.

In the approaches \cite{abghr} and \cite{BSS}, the idea is to replace the
naive model of $GL_1(R)$ with its $E_\infty$-structure with a strictly
commutative model. As spaces with an $E_\infty$-structure are not equivalent
to strictly commutative spaces (that's the problem again that then $QS^0$
would be a
product of Eilenberg-Mac\,Lane spaces \cite{moore}), one has to find a different
category with the property that there is a Quillen equivalence between
commutative monoids in that category and $E_\infty$-monoids in spaces and such
that there are models of $\Omega^\infty(R)$ and $GL_1(R)$ in this category.

In \cite{abghr} the authors work with $\ast$-modules 
and in \cite{BSS} the authors use Schlichtkrull's model of $GL_1(R)$ in
commutative $I$-spaces where $I$ is the skeleton of the category of
finite sets and injections.

The idea is to construct a spectrum version of the assembly map for discrete
rings: If $R$ is a discrete ring and if $R^\times$ is its group of units,
then there is a canonical map
\begin{equation} \label{eq:assembly} \Z[R^\times] \ra R \end{equation}
from the group ring $\Z[R^\times]$ to $R$ that takes an element
$\sum_{i=1}^n a_i r_i$ of $\Z[R^\times]$ (with $a_i \in \Z$ and
$r_i \in R^\times$) to the same sum, but now we use the ring structure of $R$
to convert the formal sum into an actual sum $\sum_{i=1}^n a_i r_i \in R$.
Note that $R^\times$ is an abelian group if $R$ is a commutative ring.

We will sketch both constructions of Thom spectra and briefly discuss the
application in \cite{abghr} to the question when a Thom spectrum allows for
an $E_\infty$-map to some other $E_\infty$-ring spectrum, for instance,
whether one 
can realize an $E_\infty$-version of the string orientation $MO\langle
8\rangle \ra \tmf$ \cite{ahr} or an $E_\infty$-version of a complex orientation
\cite{hopkins-lawson}.  

The focus in \cite{BSS} is on multiplicative properties of the Thom spectrum
functor and on applications to topological Hochschild homology. We present
the results about multiplicative structures and discuss their results on
$\THH$ of Thom spectra in Section \ref{sec:thhetc}.
We'll also describe how the concept of $I$-spaces can be generalized to a
setting in which the units can be adapted to non-connective ring spectra.

\subsection{Thom spectra via $\mathbb{L}$-spaces and orientations}
Fix a countably infinite-dimensional real vector space $U$ and consider
$$ \mathbb{L} = \call(1) = \call(U, U), $$
the space of linear isometries from $U$ to itself. The notation $\call(1)$ is
due to the fact that $\call(1)$ is the $1$-ary part of the famous linear
isometries operad \cite[\S 1]{bv} with arity $n$ term 
\begin{equation} \label{eq:call}
\call(n) = \call(U^n, U).
\end{equation}
See \cite{bv} or \cite{abghr} for details. Note that $\mathbb{L}$ is a 
monoid with respect to composition. 

\begin{defn}
The \emph{category of $\mathbb{L}$-spaces, $\T[\mathbb{L}]$}, is the
category of spaces with a left action of the monoid $\mathbb{L}$.
\end{defn}

Using the $2$-ary part of the linear isometries operad, one can manufacture
a product on $\T[\mathbb{L}]$:
For objects $X, Y$ of $\T[\mathbb{L}]$ their product $X \times_\mathbb{L} Y$ is
the coequalizer
$$\xymatrix@1{{\call(2) \times (\call(1) \times \call(1)) \times X \times Y}
\ar@<0.5ex>[r] \ar@<-0.5ex>[r] &
{\, \call(2) \times X \times Y} \ar@{.>}[r] & {X \times_\mathbb{L} Y.} }$$
Here, one map uses the $\call(1)$-action on the spaces $X$ and $Y$ and
the other map uses the operad product $\call(2) \times \call(1) \times
\call(1) \ra \call(2)$.

As $\call(2) = \call(U^2, U)$ has a left $\call(1)$-action, $X 
\times_\mathbb{L} Y$ is an $\call(1)$-space. The product is associative 
and has a symmetry, but it is only weakly unital. See \cite[\S 4]{bcs} 
for a careful discussion. 

By \cite[Proposition 4.7]{bcs} there is an isomorphism of categories between
commutative monoids with respect to $\times_\mathbb{L}$ and $E_\infty$-spaces
whose $E_\infty$-structure is parametrized by the linear isometries operad.

For strict unitality, one restricts to the full subcategory $\M_*$ of objects
of $\T[\mathbb{L}]$ for which the unit map is a homeomorphism. Such
objects are called \emph{$\ast$-modules}. The commutative
monoids in $\M_*$ again model $E_\infty$-spaces \cite[Proposition 4.11]{bcs}.

For an associative ring spectrum $R$, there is a strictly associative
model in $\M_*$ of the space of units $GL_1(R)$ and the functor $GL_1$
is right adjoint to the inclusion of grouplike objects. One can form a bar
construction, $B_{\times_\mathbb{L}}$, 
of a cofibrant replacement of $GL_1(R)$, $GL_1(R)^c$,
with respect to the monoidal product $\times_\mathbb{L}$, where
$B_{\times_\mathbb{L}}(GL_1(R)^c)$ is the geometric realization of the
simplicial $\M_*$ object 
$$ [n] \mapsto * \times_\mathbb{L}
\underbrace{GL_1^c(R) \times_\mathbb{L} \ldots \times_\mathbb{L} GL_1^c(R)}_n
\times_\mathbb{L} *.$$
Similarly, $E_{\times_\mathbb{L}}(GL_1(R)^c)$ is constructed out of the
simplicial object
$$ [n] \mapsto * \times_\mathbb{L}
\underbrace{GL_1^c(R) \times_\mathbb{L} \ldots \times_\mathbb{L}
GL_1^c(R)}_{n+1}.$$
Adapted to the situation there are suspension spectrum and underlying infinite
loop space functors \cite[Lemma 7.5]{lind}
\begin{equation} \label{eq:loopsusp}
\xymatrix@1{
{\M_*} \ar@<0.5ex>[rrr]^{(\Sigma^\infty_\mathbb{L})_+} & & & {\M_S}
\ar@<0.5ex>[lll]^{\Omega^\infty_S}
}
\end{equation}
that are a Quillen adjoint pair of functors. Here, the suspension functor is
strong symmetric monoidal and the underlying loop space functor is lax
symmetric monoidal.

The spectrum version of the assembly map from \eqref{eq:assembly} is
$$ (\Sigma^\infty_\mathbb{L})_+ (GL_1^c(R)) \ra
(\Sigma^\infty_\mathbb{L})_+ (GL_1(R)) \ra R$$
where the first map comes from the cofibrant replacement of the units and
the second one is the counit of an adjunction \cite[(3.1)]{abghr}.

\begin{defn} \, \cite[Definition 3.12]{abghr} \, \label{def:thomass}
The \emph{Thom spectrum for a map $f \colon X \ra
B_{\times_\mathbb{L}}(GL_1^c(R))$ in
$\M_*$} is the $R$-module spectrum (in the world of \cite{EKMM})
\begin{equation} \label{eq:thomass}
M(f) = (\Sigma^\infty_{\mathbb{L}})_+ P^c
\wedge_{(\Sigma^\infty_{\mathbb{L}})_+ GL_1^c(R)} R.
\end{equation}
\end{defn}
Here, $P^c$ is a cofibrant replacement as a right $GL_1^c(R)$-module
of the pullback
$$ \xymatrix{
{P} \ar[r] \ar[d] & {E_{\times_\mathbb{L}}(GL_1^c(R))} \ar[d] \\
{X} \ar[r] & {B_{\times_\mathbb{L}}(GL_1^c(R))}
}$$
\begin{rem}
Because of the cofibrancy of $P^c$, the smash product in
\eqref{eq:thomass} is actually a derived
smash product. See \cite[\S 3]{abghr} for the necessary background on the
model structures involved.
\end{rem}

In the commutative case, \cite[\S 4, \S 5]{abghr} is set in the classical
setting of $E_\infty$ ring spectra and $E_\infty$-spaces as in
\cite{may-einfty}. If $R$ is a commutative ring spectrum or an $E_\infty$ ring
spectrum then the naive space of units, $GL_1(R)$, is a group-like
$E_\infty$-space and hence is an infinite loop space that has an associated
connective spectrum, $gl_1(R)$,  with $\Omega^\infty gl_1(R) = GL_1(R)$.

The crucial ingredient in this case is the pair of functors
$(\Sigma^\infty_+\Omega^\infty, gl_1)$ that is an adjunction between the
homotopy category of connective spectra and the homotopy category of
$E_\infty$-ring spectra in the sense of Lewis-May-Steinberger.

In particular, one gets a version of the assembly map from
\eqref{eq:assembly}
$$ \Sigma^\infty_+\Omega^\infty(gl_1(R)) \ra R$$
for every $E_\infty$-ring spectrum. By \cite{EKMM} one can replace
$E_\infty$-ring spectra with commutative $S$-algebras, \ie, with commutative
ring spectra. This simplifies the discussion of pushouts and allows us to
replace $\Sigma^\infty_+\Omega^\infty$ by
$(\Sigma^\infty_\mathbb{L})_+ \Omega_S^\infty$ from \eqref{eq:loopsusp} to get
$$ (\Sigma^\infty_\mathbb{L})_+ \Omega_S^\infty(gl_1(R)) \ra R$$
Note, that a map of infinite loop spaces
$f\colon B \ra BGL_1(R)$ encodes the same data as a map of spectra
$f \colon b \ra bgl_1(R)$ where the lower case letters denote the associated
connective spectra. As before we consider the pullback $p$
$$ \xymatrix{
{p} \ar[r] \ar[d] & {egl_1(R)} \ar[d] \\
{b} \ar[r]^(0.4)f & {bgl_1(R)}
}$$
and form the corresponding derived smash product:
\begin{defn}
Let $f \colon b \ra bgl_1(R)$ be a map of connective spectra. Then the
\emph{Thom spectrum associated to $f$, $M(f)$}, is the homotopy pushout in the
category of commutative $S$-algebras
$$
M(f) = (R \wedge (\Sigma^\infty_\mathbb{L})_+ \Omega_S^\infty p)
\wedge^L_{R \wedge (\Sigma^\infty_\mathbb{L})_+ \Omega_S^\infty gl_1(R)} R
$$
\end{defn}
As the (homotopy) pushout is the (derived) smash product, this
resembles the construction from \eqref{eq:thomass}

In the commutative ring spectrum setting the question about orientations is
the following problem: Assume that there is a map of commutative ring spectra
$\alpha \colon R \ra A$, then $A$ is a commutative $R$-algebra spectrum.
For a map of spectra $f \colon b \ra bgl_1(R)$ as above we can ask whether
there is a morphism of commutative $R$-algebra spectra from $M(f)$ to $A$.
As $M(f)$ is defined as a (homotopy) pushout, we see that we get a condition
that says that we get maps from the ingredients of the derived smash product.
As we start with a map $\alpha$ from $R$ to $A$, we get an induced map
$$ gl_1(\alpha) \colon gl_1(R) \ra gl_1(A).$$
So what is missing is a map
$$(\Sigma^\infty_\mathbb{L})_+ \Omega_S^\infty p \ra A$$
that is compatible with the map
$(\Sigma^\infty_\mathbb{L})_+ \Omega_S^\infty gl_1(R) \ra A$.
With the help of the adjunction this means that we need a  map
$$ p \ra gl_1(A)$$
such that precomposing it with the map $gl_1(R) \ra p$ gives $gl_1(\alpha)$.
This argument can be turned into a proof for the following result:
\begin{thm} \, \cite[Theorem 4.6]{abghr} \, 
The derived mapping space of commutative $R$-algebras from $M(f)$ to $A$,
$\text{Map}_{\C_R}(M(f), A)$, is
weakly equivalent to the fiber in the map between derived mapping spaces
$$ \text{Map}_{\M_S}(p, gl_1(A)) \ra \text{Map}_{\M_S}(gl_1(R), gl_1(A))$$
at the basepoint $gl_1(\alpha)$ of $\text{Map}_{\M_S}(gl_1(R), gl_1(A))$.
\end{thm}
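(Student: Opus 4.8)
The plan is to identify $M(f)$ as a homotopy pushout and then apply the universal property of pushouts in the model category $\C_R$ of commutative $R$-algebras. Recall from the definition just above that
$$ M(f) = (R \wedge (\Sigma^\infty_\mathbb{L})_+ \Omega_S^\infty p)
\wedge^L_{R \wedge (\Sigma^\infty_\mathbb{L})_+ \Omega_S^\infty gl_1(R)} R, $$
and that in a symmetric monoidal category the coproduct of commutative monoids is their tensor product, so this derived smash product \emph{is} the homotopy pushout in $\C_R$ of the span
$$ R \longleftarrow R \wedge (\Sigma^\infty_\mathbb{L})_+ \Omega_S^\infty gl_1(R) \longrightarrow R \wedge (\Sigma^\infty_\mathbb{L})_+ \Omega_S^\infty p. $$
Applying $\text{Map}_{\C_R}(-,A)$ sends this homotopy pushout to a homotopy pullback of mapping spaces, so $\text{Map}_{\C_R}(M(f),A)$ is the homotopy fiber product of $\text{Map}_{\C_R}(R,A)$ and $\text{Map}_{\C_R}(R \wedge (\Sigma^\infty_\mathbb{L})_+ \Omega_S^\infty p, A)$ over $\text{Map}_{\C_R}(R \wedge (\Sigma^\infty_\mathbb{L})_+ \Omega_S^\infty gl_1(R), A)$.

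Next I would simplify each of the three corner terms. Since $R$ is initial in $\C_R$, the space $\text{Map}_{\C_R}(R,A)$ is contractible, so the homotopy pullback collapses to the homotopy fiber of the remaining map over the point determined by the map $R \to A$, i.e. by $\alpha$. For the other two corners I would use the adjunctions recorded in the excerpt: for any $R$-algebra of the form $R \wedge (\Sigma^\infty_\mathbb{L})_+ X$ with $X \in \M_*$ a group-like $\ast$-module, the free commutative $R$-algebra adjunction together with the adjunction $\big((\Sigma^\infty_\mathbb{L})_+, \Omega^\infty_S\big)$ of \eqref{eq:loopsusp} and the fact that $GL_1$ is right adjoint to the inclusion of grouplike objects give
$$ \text{Map}_{\C_R}\big(R \wedge (\Sigma^\infty_\mathbb{L})_+ X, A\big) \simeq \text{Map}_{\M_*}\big(X, GL_1(A)\big) \simeq \text{Map}_{\M_S}\big(x, gl_1(A)\big), $$
where the last step passes to the associated connective spectra as in the discussion preceding the statement (a map of infinite loop spaces $B \to BGL_1(A)$ is the same data as a map of spectra $b \to bgl_1(A)$, and similarly at the level of $gl_1$). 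Applying this with $X = p$ and with $X = gl_1(R)$ turns the fiber sequence into
$$ \text{Map}_{\C_R}(M(f), A) \simeq \mathrm{hofib}\Big( \text{Map}_{\M_S}(p, gl_1(A)) \to \text{Map}_{\M_S}(gl_1(R), gl_1(A)) \Big), $$
with the fiber taken at the image of the class of $\alpha$, which is exactly $gl_1(\alpha)$ under the identification $\text{Map}_{\M_S}(gl_1(R), gl_1(A)) \simeq \text{Map}_{\C_R}(R \wedge (\Sigma^\infty_\mathbb{L})_+\Omega^\infty_S gl_1(R), A)$; this is the claimed statement.

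The main obstacle is bookkeeping the homotopy-coherence and cofibrancy needed to make "$\text{Map}_{\C_R}(-,A)$ turns homotopy pushouts into homotopy pullbacks" a legitimate assertion about \emph{derived} mapping spaces, and to make the chain of adjunction equivalences above an equivalence of \emph{derived} mapping spaces rather than just a bijection on homotopy classes. Concretely one must know that the span defining $M(f)$ is (or can be replaced by) a cofibration span of cofibrant objects in $\C_R$ — this is why $P^c$, $GL_1^c(R)$ and the cofibrant replacements appeared in the constructions above — and that the adjunctions $\big((\Sigma^\infty_\mathbb{L})_+, \Omega^\infty_S\big)$ and free commutative algebra $\dashv$ forgetful are Quillen adjunctions so they induce equivalences on mapping spaces between cofibrant and fibrant objects; all of this is available from the model-categorical setup of \cite{abghr} and \cite{EKMM} cited above. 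Once that foundational care is in place, the rest is the formal manipulation sketched, together with the verification that the various basepoints match up, i.e. that tracing $\alpha$ through the identifications really does produce $gl_1(\alpha)$ — the same compatibility that the informal argument in the paragraph before the theorem already isolates as the one piece of data one needs to supply.
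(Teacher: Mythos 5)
The proposal is correct and takes essentially the same approach as the paper: in fact the survey sketches exactly this argument (view $M(f)$ as a homotopy pushout, note that $\text{Map}_{\C_R}(-,A)$ sends it to a homotopy pullback, observe that the corner over $R$ collapses because $R$ is initial in $\C_R$, and use the adjunctions converting commutative $R$-algebra maps out of $R \wedge (\Sigma^\infty_\mathbb{L})_+\Omega^\infty_S(-)$ into spectrum maps into $gl_1(A)$) in the paragraph immediately preceding the theorem, concluding "This argument can be turned into a proof for the following result." Your proposal just fleshes out that sketch with the needed cofibrancy/derived-mapping-space bookkeeping, which is precisely the point the paper defers to \cite{abghr}.
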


An important example is the question of the string orientation of 
the spectrum of topological modular forms, $\tmf$. For background on $\tmf$ 
and its variants see \cite{tmf}. In particular, \cite[Chapter 
10]{tmf} contains Andr\'e Henriques' notes of Mike Hopkins' lecture on the
string orientation.   
Let $BO\langle 8\rangle$ be the $7$-connected cover of $BO$ and
let $bo\langle 8\rangle$ be the associated spectrum with the canonical
map $f \colon bo\langle 8\rangle \ra bgl_1(S)$. So we are in the situation
where $R=S$ and we take $A = \tmf$. Ando, Hopkins and Rezk \cite{ahr}
establish the existence of an $E_\infty$-map
$$ M\text{String}=MO\langle 8\rangle \ra \tmf$$
by showing a fiber property as above.

An approach to orientations of the form $MU \ra E$ is
described in \cite{hopkins-lawson}: You start with an $E_\infty$ ring
spectrum $E$ and an ordinary complex
orientation of $E$ \cite[\S 6.1]{rezk}  and want to know whether you
can refine this to an $E_\infty$-map $MU \ra E$. Hopkins and Lawson
establish a filtration of $MU$ by $E_\infty$-Thom spectra 
$$ S \ra MX_1 \ra MX_2 \ra \ldots \ra MU $$ 
and for a given $E_\infty$-map $MX_n \ra E$ they identify the space of
extensions to an $E_\infty$-map $MX_{n+1} \ra E$ \cite[Theorem
1]{hopkins-lawson}.

\begin{rem}
In \cite{abghr} the authors present a different approach to Thom spectra and
questions about orientations that uses $\infty$-categorical
techniques. In certain cases it is unrealistic to hope for
$E_\infty$-maps out of Thom spectra, for instance if one doesn't know
that the target spectrum carries an $E_\infty$ structure. 
The space of $E_n$-maps out of Thom spectra is described  in
\cite[Theorem 4.2]{cm}, \cite[Corollary 3.18]{acb}. 
\end{rem}

\subsection{Thom spectra via $I$-spaces}
Let $I$ be the skeleton of the category of finite sets and injective maps. As
objects we choose the sets $\bfn = \{1, \ldots, n\}$ for $n \geq 0$ with the
convention that $\bf0$ denotes the empty set. A morphism
$f \in I(\bfn, \bfm)$ is an injective function from $\bfn$ to
$\bfm$. Hence $\bf0$ is an initial object of $I$ and the permutation group
$\Sigma_n$ is the group of automorphisms of $\bfn$ in $I$. The category $I$
is symmetric monoidal with respect to the disjoint union: $\bfn \sqcup \bfm
= \mathbf{n+m}$ with unit $\bf0$ and non-trivial symmetry $\mathbf{n+m}
\ra \mathbf{m+n}$ given by the shuffle permutation that moves the first $n$
elements to the positions $m+1, \ldots, m+n$.

The functor category of $I$-spaces, $\T^I$, \ie, the category of functors
$X \colon I
\ra \T$ together with natural transformations as morphisms,
inherits a symmetric monoidal structure from $I$ and $\T$ via the
Day convolution product. Explicitly, one gets:
\begin{defn}
Let $X, Y$ be two $I$-spaces. Their product $X \boxtimes Y$ is the $I$-space
given by
$$ (X \boxtimes Y)(\bfn) = \colim_{\mathbf{p} \sqcup \mathbf{q} \ra \bfn}
X(\mathbf{p}) \times Y(\mathbf{q}).$$
The unit $1_I$ is the discrete $I$-space $\bfn \mapsto 1_I(\bf0, \bfn)$.
\end{defn}
As $\bf0$ is initial, the unit $1_I$ is the terminal object in $\T^I$. 
Commutative monoids in $\T^I$ are called \emph{commutative $I$-space monoids}
in \cite{BSS} and their category is denoted by $C(\T^I)$. A general fact
about Day convolution products is that
commutative monoids correspond to lax symmetric monoidal functors.

For an $I$-space $X$ let's denote by $X_{hI}$ the Bousfield-Kan homotopy
colimit of $X$.

\begin{defn} \, \cite[Definition 2.2]{BSS} \, A map of
$I$-spaces $f\colon X \ra Y$ is \emph{an $I$-equivalence}, if the
induced map on homotopy colimits $f_{hI} \colon X_{hI} \ra Y_{hI}$ is a
weak homotopy equivalence in $\T$.
\end{defn}
With the corresponding $I$-model structure the category of $I$-spaces
is actually Quillen equivalent to the category of spaces \cite[Theorem
3.3]{sagave-sch}, but there is a \emph{positive
flat model structure} on $I$-spaces (see \cite[\S 2]{BSS}) that lifts to
a right-induced model structure on $C(\T^I)$ that makes it Quillen
equivalent to $E_\infty$-spaces.

Let $\Sp^\Sigma$ denote the category of symmetric spectra. There is a
canonical Quillen adjoint functor pair 
\begin{equation}  \label{eq:adjI}
\xymatrix@1{ {\T^I}  \ar@<0.5ex>[rr]^{\mathbb{S}^I} & & {\Sp^\Sigma}
\ar@<0.5ex>[ll]^{\Omega^I} }
\end{equation}
modelling the suspension spectrum functor and the underlying infinite loop
space functor with
$$ \mathbb{S}^IX(n) = \mathbb{S}^n \wedge X(\bfn), \quad \Omega^I(E)(\bfn)
= \Omega^nE_n$$
Where $\mathbb{S}^n$ is the $n$-fold smash product of the $1$-sphere with
$\Sigma_n$-action given by permutation of the smash factors.

Stable equivalences in symmetric spectra do in general not agree with stable 
homotopy equivalences, but there is a notion of  \emph{semistable} symmetric 
spectra, that has the feature that a map $f \colon E \ra F$ between 
two semistable symmetric spectra is a stable equivalence if and only if it is 
a stable homotopy equivalence. 
See \cite[\S 5.6]{HSS} for details and other characterizations. 

\begin{defn} For a 
commutative semistable symmetric ring spectrum $R$ the commutative 
$I$-space monoid of units, $GL_1^I(R)$, has as $GL_1^I(R)(\bfn)$ those 
components of the commutative $I$-space monoid $\Omega^I(R)(\bfn) = 
\Omega^nR_n$ that represent units in $\pi_0(R)$. 
\end{defn} 
The adjunction from \eqref{eq:adjI} gives a version of the assembly map from 
\eqref{eq:assembly} as $$ \mathbb{S}^I(GL_1^I(R)) \ra 
\mathbb{S}^I\Omega^I(R) \ra R.$$

For technical reasons one has to work with a cofibrant replacement of
$GL_1^I(R)$, $G \ra GL_1^I(R)$ in the positive flat model structure on
$C(\T^I)$. The construction of a Thom spectrum associated to a map $f \colon
X \ra BG$ is now similar to the approach in \cite{abghr}: One defines $BG$
and $EG$ via two sided-bar constructions and takes a suitable pushout:

\begin{defn} \cite[Definitions 2.10, 2.12, 3.6]{BSS}
\begin{itemize}
\item
Let $BG = B_\boxtimes(1_I, G, 1_I)$ and let $EG$ be defined via a
functorial factorization
$$\xymatrix@1{{B_\boxtimes(1_I, G, G) \phantom{b1}} \ar@{>->}[r]^(0.6)\sim & {EG}
\ar@{->>}[r] & {BG}}.$$
\item
For any $I$-space $X$ over $BG$ define $U(X)$ as the $I$-space with $G$-action
given by the pullback
$$\xymatrix{
{U(X)} \ar@{.>}[r] \ar@{.>}[d] & {X} \ar[d] \\
{EG} \ar[r] & {BG.}
}$$
Here, $X$ and $BG$ are considered as $I$-spaces with trivial $G$-action.
\item
Let $R$ be a semistable commutative symmetric ring spectrum that is
$S$-cofibrant.  
 
The \emph{Thom spectrum associated with a map of $I$-spaces $f\colon X
  \ra BG$} is  
\begin{equation} \label{eq:thomI}
M^I(f) = B_\boxtimes(\mathbb{S}^I(UX), \mathbb{S}^IG, R).
\end{equation}
\end{itemize}
\end{defn}
You should think of this two-sided bar construction as
$$ \mathbb{S}^I(UX) \boxtimes^L_{\mathbb{S}^IG} R$$
and then you have to admit that this looks very similar to
\eqref{eq:thomass}.
This Thom spectrum functor is homotopically meaningful (see
\cite[Proposition 3.8]{BSS}). Concerning multiplicative structures one gets
the following result.
\begin{prop} \, \cite[Proposition 3.10, Corollary 3.11]{BSS} \, 
The functor $M^I(-)$ is lax symmetric monoidal and if $\mathcal{D}$
is an operad in spaces, then it sends $\mathcal{D}$-algebras in $\T^I$
over $BG$ to $\mathcal{D}$-algebras in $R$-modules in symmetric spectra
over $M^IGL_1(R) = B_\boxtimes(\mathbb{S}^I(EG), \mathbb{S}^I(G), R)$.
\end{prop}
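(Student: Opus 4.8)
The plan is to realize $M^I(-)$ as a composite of a few functors, each carrying a lax or strong symmetric monoidal structure, so that the proposition becomes a formal consequence of the fact that lax symmetric monoidal functors preserve algebras over operads. Since $G$ is a commutative $I$-space monoid, so is $BG = B_\boxtimes(1_I, G, 1_I)$, and hence the slice category $(\T^I \downarrow BG)$ inherits a symmetric monoidal structure from $(\T^I, \boxtimes)$: the product of $f\colon X \to BG$ and $g\colon Y \to BG$ is $X \boxtimes Y \to BG \boxtimes BG \to BG$, composed with the multiplication of $BG$, and the unit is the unit map $1_I \to BG$. By a $\mathcal{D}$-algebra in $\T^I$ over $BG$ I mean a $\mathcal{D}$-algebra in this symmetric monoidal category, i.e.\ a $\mathcal{D}$-algebra $X$ together with a $\mathcal{D}$-algebra map to $BG$.

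First I would treat $U(-) = EG \times_{BG}(-)$. Because $G$ is commutative, the two-sided bar construction $B_\boxtimes(1_I, G, G)$, and hence its functorial replacement $EG$, can be taken to be a commutative $I$-space monoid with $EG \to BG$ a map of commutative monoids; this supplies a natural multiplication $EG \boxtimes EG \to EG$ lying over $BG \boxtimes BG \to BG$. Composing the canonical comparison map $(EG \times_{BG} X)\boxtimes(EG \times_{BG} Y) \to (EG \boxtimes EG)\times_{BG \boxtimes BG}(X \boxtimes Y)$ with the map induced by the multiplication of $EG$ yields a natural transformation $U(X) \boxtimes U(Y) \to U(X \boxtimes Y)$ compatible with the $G$-actions; checking its associativity, unitality and symmetry is a diagram chase with pullbacks. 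Next, the functor $\mathbb{S}^I$ of \eqref{eq:adjI} is strong symmetric monoidal, so it converts $G$ into a commutative symmetric ring spectrum $\mathbb{S}^I G$, converts $EG$ into a commutative $\mathbb{S}^I G$-algebra, and converts relative $\boxtimes_G$-products into relative $\wedge_{\mathbb{S}^I G}$-products; thus $\mathbb{S}^I \circ U$ is lax symmetric monoidal with values in right $\mathbb{S}^I G$-modules.

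Finally, the assembly map $\mathbb{S}^I G \to \mathbb{S}^I GL_1^I(R) \to \mathbb{S}^I \Omega^I(R) \to R$ is a morphism of commutative symmetric ring spectra: $GL_1^I(R) \to \Omega^I(R)$ is a map of commutative $I$-space monoids, and $\mathbb{S}^I \dashv \Omega^I$ is a monoidal adjunction, so the counit $\mathbb{S}^I\Omega^I(R) \to R$ is multiplicative. Hence $R$ is a commutative $\mathbb{S}^I G$-algebra, and the relative smash product $B_\boxtimes(-, \mathbb{S}^I G, R) \simeq (-)\wedge^L_{\mathbb{S}^I G} R$ is a lax symmetric monoidal functor from right $\mathbb{S}^I G$-modules to $R$-modules, with structure map
$$ (M \wedge^L_{\mathbb{S}^I G} R)\wedge(N \wedge^L_{\mathbb{S}^I G} R) \cong (M \wedge N)\wedge^L_{\mathbb{S}^I G \wedge \mathbb{S}^I G}(R \wedge R) \lra (M \wedge N)\wedge^L_{\mathbb{S}^I G} R $$
built from the multiplications of $\mathbb{S}^I G$ and of $R$; on the simplicial bar objects this is realized by the Eilenberg--Zilber shuffle map together with these multiplications, and coherence is inherited from the strong monoidality of geometric realization. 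Composing the three functors shows that $M^I(-)$ is lax symmetric monoidal, whence it carries $\mathcal{D}$-algebras to $\mathcal{D}$-algebras. For the target statement, the projection $U(X) \to EG = U(\id_{BG})$ is natural and compatible with all the structure maps, so it induces a map of $\mathcal{D}$-algebras $M^I(f) \to B_\boxtimes(\mathbb{S}^I(EG), \mathbb{S}^I G, R) = M^I GL_1(R)$; since $\id_{BG}$ is the terminal object of $(\T^I \downarrow BG)$, hence canonically a commutative monoid, its image $M^I GL_1(R)$ is a commutative monoid, and $M^I(f)$ is thereby a $\mathcal{D}$-algebra over $M^I GL_1(R)$.

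I expect the main obstacle to be homotopical rather than formal. One has to know that every bar construction in sight computes the intended derived functor and that the comparison maps are weak equivalences (respectively well defined) where needed; this is where the hypotheses that $G$ is cofibrant in the positive flat model structure, that $R$ is semistable and $S$-cofibrant, and that $\mathbb{S}^I G$ is flat enter, and it is essentially the content of \cite[Proposition 3.8]{BSS}. The other genuinely laborious point is verifying, on the point-set level, that the transformations assembled above --- between a pullback over $BG$ and a $\boxtimes$-product, between a relative bar construction and a relative smash product, and the Eilenberg--Zilber shuffles --- satisfy the pentagon and hexagon coherence axioms of a symmetric monoidal functor; this is routine but lengthy, and it is where the rigidity afforded by the positive flat model structure does its work.
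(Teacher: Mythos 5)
The survey does not give an inline proof of this proposition; it simply cites \cite[Proposition 3.10, Corollary 3.11]{BSS}, so your argument must be judged on its own terms. Your overall strategy --- expressing $M^I$ as the composite of the pullback functor $U$, the strong symmetric monoidal functor $\mathbb{S}^I$, and the two-sided bar construction $B_\boxtimes(-,\mathbb{S}^I G, R)$, each equipped with a lax symmetric monoidal structure, then invoking the fact that lax symmetric monoidal functors preserve operad algebras --- is sensible and captures the right formal skeleton, and your identification of $M^I GL_1(R)$ as the image of the terminal object $\id_{BG}$ of the slice category is the correct way to produce the structure map to the base.

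There is, however, a genuine gap in the step that supplies the lax monoidal structure on $U$. You need a multiplication $EG \boxtimes EG \ra EG$ lying over $BG \boxtimes BG \ra BG$, and you assert that $EG$ ``can be taken to be a commutative $I$-space monoid'' with $EG \ra BG$ a monoid map. But $EG$ is defined in the paper via a functorial factorization
$\xymatrix@1{{B_\boxtimes(1_I, G, G) \phantom{b1}} \ar@{>->}[r]^(0.6)\sim & {EG} \ar@{->>}[r] & {BG}}$
taken in the category of $I$-spaces, and such a factorization does not, in general, carry a commutative monoid structure on its middle object: although $B_\boxtimes(1_I,G,G)$ and $BG$ are commutative $I$-space monoids and the map between them is multiplicative, a functorial factorization in the underlying category need not be compatible with the monoidal structure. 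To make your argument go through you would either have to replace the paper's $EG$ by a factorization taken in the right-induced model structure on commutative $I$-space monoids (and then check that this still yields a positive flat fibration of $I$-spaces, which it does, since fibrations there are created in $\T^I$), or else use the unreplaced object $B_\boxtimes(1_I,G,G)$ directly and argue separately that the resulting bar construction still computes the right homotopy type. Either fix is plausible, but as written your proof quietly changes the construction.

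Two smaller points worth making explicit: (i) since $\mathcal{D}$ is an operad in \emph{spaces} rather than in $\T^I$ or in symmetric spectra, you need the lax symmetric monoidal functors in your composite to be compatible with the tensoring of each category over spaces before ``lax monoidal preserves algebras'' applies --- this is true for all of $U$, $\mathbb{S}^I$ and the bar construction, but it is an additional structure you are tacitly using; and (ii) your comparison map $(EG\times_{BG}X)\boxtimes(EG\times_{BG}Y)\ra(EG\boxtimes EG)\times_{BG\boxtimes BG}(X\boxtimes Y)$ does exist by the universal property of the pullback applied termwise in the Day convolution colimit, but it goes from a colimit of pullbacks to a pullback of colimits and is in general far from an equivalence; you correctly only need it as a structure map, not an isomorphism, but this is worth saying, since it is the reason the functor is merely lax and not strong monoidal.
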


If you don't like diagram categories for some reason, then there is also
an \emph{$I$-spacification functor} \cite[\S 4.1]{BSS} that transforms a
map of topological spaces 
\begin{equation} \label{eq:bghi}
f \colon X \ra BG_{hI}
\end{equation} 
to a map of $I$-spaces
over $BG$, so you can associate a Thom spectrum to such a map as well. By
abuse of notation, we will still denote this Thom spectrum by $M^I(f)$.
This construction respects actions of operads augmented over the
Barratt-Eccles operad and hence it also provides an $E_{\infty}$
Thom spectrum functor.

An important question is: Given a ring spectrum $A$, can it be realized as a
Thom spectrum with respect to a loop map, \ie, in the setting of \cite{BSS}
is $A$ equivalent to $M^I(f)$ with
$f$ a loop map to $BG_{hI}$? A striking result is that one can identify
certain quotients as such Thom spectra!
\begin{thm} \label{thm:thomquotients} \, \cite[Theorem 5.6]{BSS} \,
Let $R$ be a commutative ring spectrum whose homotopy groups are concentrated
in even degrees and let $u_i \in \pi_{2i}(R)$ be arbitrary elements with
$1 \leq i \leq n-1$. Then the
iterative cofiber of the multiplication maps by the $u_i$'s, $R/(u_1,\ldots,
u_{n-1})$ can be realized as the Thom spectrum of a loop map from $SU(n)$ to
$BG_{hI}$. In particular,  $R/(u_1,\ldots,
u_{n-1})$ can be realized as an associative ring spectrum.
\end{thm}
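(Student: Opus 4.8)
The plan is to exhibit an explicit loop map $f\colon SU(n) \to BG_{hI}$ whose associated Thom spectrum is $R/(u_1,\dots,u_{n-1})$, using the filtration of $SU(n)$ by the subgroups $SU(2) \subset SU(3) \subset \dots \subset SU(n)$, or equivalently the cell structure $SU(n) = S^3 \cup e^5 \cup \dots$ coming from the fibrations $SU(k) \to SU(k+1) \to S^{2k+1}$. First I would recall that a loop map $X \to BG_{hI}$ is the same data as a map of connective spectra, or, after applying $\Omega$, a map $\Omega X \to G_{hI} \simeq GL_1(R)$ of grouplike objects; since $GL_1(R)$ detects only units in $\pi_0(R)$ and $\pi_0(R)$ sits in even degrees, the target sees all of $\pi_*(R)$ through the free loop/suspension adjunction. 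The element $u_i \in \pi_{2i}(R)$ should be encoded by a map $S^{2i} \to \Omega^\infty R$ which, after the assembly-map machinery $\mathbb{S}^I\Omega^I(R) \to R$ and the identification of extended powers, realizes multiplication by $u_i$ on the level of Thom spectra. The key point is that attaching the cell that builds $SU(i+1)$ from $SU(i)$ corresponds, under the Thom spectrum functor, precisely to forming the cofiber of $u_i$: this is the brave-new-algebra analogue of the classical fact that the Thom spectrum of a map detecting $u \in \pi_*R$ along an $S^{2i+1}$-bundle is the $R$-module cofiber of multiplication by $u$.

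The steps, in order, would be: (1) Build a loop map $g_i \colon SU(i+1) \to BG_{hI}$ extending $g_{i-1}$ on $SU(i)$, inductively, by using that $SU(i+1)$ is obtained from $SU(i)$ by a principal $S^{2i+1}$-cofibration and that the obstruction to extending lands in a group controlled by $\pi_{2i}(R)$, into which we insert the chosen class $u_i$. (2) Identify $M^I(g_i)$ with $M^I(g_{i-1})/(u_i)$ as $R$-modules: apply the Thom spectrum functor to the cofiber sequence (really a pushout of $I$-spaces over $BG$) presenting $SU(i+1)$, and use that $M^I(-)$ sends such pushouts to pushouts of $R$-modules in symmetric spectra — this is where the two-sided bar construction description $\mathbb{S}^I(UX)\boxtimes^L_{\mathbb{S}^IG}R$ and its homotopy invariance (Proposition~3.8 of \cite{BSS}) do the work. (3) Start the induction: $SU(2) = S^3$, the loop map $S^3 \to BG_{hI}$ detecting $u_1 \in \pi_2(R)$ has Thom spectrum $R/(u_1)$ since $S^3 = \Sigma\mathbb{CP}^1$ and the Thom spectrum of the corresponding line-bundle-type map is exactly the cofiber of $u_1 \colon \Sigma^2 R \to R$. (4) Conclude $M^I(g_{n-1}) \simeq R/(u_1,\dots,u_{n-1})$ from $SU(n) = SU(n-1+1)$, and note that since $M^I(f)$ of any map is an honest symmetric ring spectrum (the Thom spectrum functor lands in $R$-algebras when $f$ is suitably multiplicative, or at least in associative ring spectra when $f$ is a loop map, by the $A_\infty$-part of the multiplicativity statement), the iterated cofiber inherits an associative multiplication.

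The main obstacle I expect is step~(1): constructing the loop map $SU(n) \to BG_{hI}$ so that it genuinely realizes the \emph{specific} elements $u_i$ and not merely some elements, and so that each stage is compatibly a map of the relevant $I$-spaces (or topological spaces, via the $I$-spacification functor) respecting enough structure that the Thom spectrum functor applies. One must check that the obstruction groups to extending over the successive cells of $SU(n)$ are exactly $\pi_{2i}(R)$ (using that $R$ has homotopy in even degrees, so odd-dimensional obstructions vanish and the relevant $\lim^1$-terms are controlled) and that the resulting map is a loop map — the latter because $SU(n)$ is itself a loop space, $SU(n) = \Omega BSU(n)$, so one would prefer to build the classifying map $BSU(n) \to B^2 GL_1(R) \simeq BBG_{hI}$ cell by cell instead; arranging the inductive extension at the \emph{delooped} level is the genuinely delicate part. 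A secondary, more technical nuisance is keeping everything cofibrant in the positive flat model structure on $C(\T^I)$ so that the bar constructions compute derived smash products, but this is bookkeeping of the kind already handled in \cite{BSS} rather than a real difficulty.
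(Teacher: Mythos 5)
The overall strategy you propose — filter $SU(n)$ by the subgroups $SU(2)\subset SU(3)\subset\cdots\subset SU(n)$, inductively build a loop map, and identify the Thom spectrum stage by stage as an iterated cofiber — is the right shape, and steps (3) and (4) (the base case $SU(2)=S^3$ and the conclusion about associativity) are fine. But the two middle steps, as you describe them, are not right, and they are exactly where the content of the theorem lives.

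First, your description of the inductive step is confused. You write that ``$SU(i+1)$ is obtained from $SU(i)$ by a principal $S^{2i+1}$-cofibration.'' This mixes up the fibration $SU(i)\to SU(i+1)\to S^{2i+1}$ with a cofiber sequence. The inclusion $SU(i)\hookrightarrow SU(i+1)$ is a cofibration of CW-complexes, but it is very far from a single-cell attachment: $SU(i+1)$ has as its cells all products of cells of $SU(i)$ with the cells of $S^{2i+1}$, so it has (roughly) twice as many cells as $SU(i)$, not one more. Consequently the pushout of $I$-spaces over $BG$ to which you want to apply $M^I(-)$ is not the cofiber of a single map $S^{2i}\to SU(i)$, and the single-obstruction-group argument for extending $g_{i-1}$ to $g_{i}$ does not apply as stated. (You are also conflating the obstruction group — which should be an \emph{odd} homotopy group of the target, vanishing because $R$ is even — with the group $\pi_{2i}(R)$ that parametrizes \emph{choices} of extension; an obstruction is something that must vanish, not something you ``insert $u_i$ into.'')

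Second, and more seriously, even granting that $M^I(g_i)$ sits in a cofiber sequence with $M^I(g_{i-1})$ and a shift of it, you never identify the connecting map. The whole content of the step ``$M^I(g_i)\simeq M^I(g_{i-1})/u_i$'' is that the connecting map $\Sigma^{2i}M^I(g_{i-1})\to M^I(g_{i-1})$ is multiplication by $u_i$, and ``the two-sided bar construction and its homotopy invariance do the work'' is not an argument for that. The paper's source \cite{BSS} gets this from an analysis of the $SU(i)$-bundle $SU(i+1)\to S^{2i+1}$: one trivializes the bundle over the two hemispheres, expresses the Thom spectrum as a pushout glued along the equator $S^{2i}$, and identifies the resulting attaching map with the image of the clutching function under $g$, which is precisely where the class $u_i\in\pi_{2i}(R)\cong\pi_{2i+1}(BG_{hI})$ enters; evenness of $R$ is used to guarantee that the odd obstructions governing the existence of the loop map vanish, not to supply the $u_i$ directly. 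So the gap in your write-up is not a minor bookkeeping nuisance: it is precisely the identification of the attaching maps and the clutching-function argument, and without it the proof does not go through.
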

An example of such a quotient is $R=ku \ra ku/u = H\Z$.
Note that there is no assumption on the regularity of the elements $u_i$
in the above statement. For periodic ring spectra the assumptions on the 
degree of the elements can be relaxed and the two-periodic version of 
Morava K-theory can be constructed as a Thom spectrum relative to $R =
E_n$, the $n$-th Morava $E$-theory or Lubin-Tate spectrum
\cite[Corollary 5.7]{BSS}. A related but 
different construction of quotients of 
Lubin-Tate spectra modelling versions of Morava K-theory is carried
out in \cite[\S 3]{hopkins-l}.

\subsection{Graded units}
There is one problem with the constructions of spaces and spectra of units
as above. As they are constructed from the underlying infinite loop space of
a spectrum and just take into account the units in $\pi_0$,  they
ignore 
graded units coming from periodicity elements in the homotopy groups of a
spectrum. So for instance, the Bott class $u \in \pi_2(\KU)$ is not
represented in $GL_1(\KU)$ or $GL_1^I(\KU)$.

There is a construction of \emph{graded units}. We'll sketch the construction
and mention two of its applications: graded Thom spectra and logarithmic ring
spectra.

\begin{defn} \, \cite[Definition 4.2]{sagave-sch} \, The category $J$ has
as objects pairs of objects of $I$. A morphism in $J((\bfn_1,\bfn_2),
(\bfm_1, \bfm_2))$ is a triple $(\alpha, \beta, \sigma)$ where $\alpha \in
I(\bfn_1, \bfm_1)$, $\beta \in I(\bfn_2, \bfm_2)$ and $\sigma$ is a
bijection
$$\sigma \colon \bfm_1 \setminus \alpha(\bfn_1) \ra
\bfm_2 \setminus \beta(\bfn_2).$$
For another morphism $(\gamma, \delta, \xi) \in
J((\bfm_1, \bfm_2), (\mathbf{l}_1, \mathbf{l}_2))$ the composition is the
morphism $(\gamma \circ \alpha, \delta \circ \beta, \tau(\xi, \sigma))$ where
$\tau(\xi, \sigma)$ is the permutation
$$ \tau(\xi, \sigma)(s) =
\begin{cases} \xi(s), & \text{ if } s \in
\mathbf{l}_1 \setminus \gamma(\bfm_1), \\
\delta(\sigma(t)), & \text{ if } s = \gamma(t) \in \gamma(\bfm_1
\setminus \alpha(\bfn_1)).
\end{cases} $$
\end{defn}
Note that $\mathbf{l}_1 \setminus \gamma(\alpha(\bfn_1))$ is the disjoint
union of $\mathbf{l}_1 \setminus \gamma(\bfm_1)$ and $\gamma(\bfm_1
\setminus \alpha(\bfn_1))$.

With these definitions $J$ is actually a category and it inherits a
symmetric monoidal structure
from $I$ via componentwise disjoint union \cite[Proposition 4.3]{sagave-sch}.
In particular, the category of $J$-spaces, $\T^J$, is symmetric monoidal
with the Day convolution product. Note, however, that the unit for the
monoidal structure $\boxtimes_J$ is $J((\bf0, \bf0), (-,-))$ and this is not
a constant functor but $J((\bf0, \bf0), (\bfn, \bfn))$ can be identified 
with the symmetric group $\Sigma_n$!

\begin{prop} \, \cite[4.4, 4.5]{sagave-sch} \, 
For every $J$-space $X$ the homotopy colimit, $X_{hJ}$, is a space over $QS^0$.
\end{prop}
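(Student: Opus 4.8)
The plan is to produce, naturally in the $J$-space $X$, a map $X_{hJ} \to QS^0$; exhibiting such a map is exactly what it means for $X_{hJ}$ to be a space over $QS^0$. Since $\T$ has a terminal object, so does the functor category $\T^J$, namely the constant $J$-space $\ast$ with value a single point, and every $J$-space $X$ admits a unique map $X \to \ast$. Because the Bousfield--Kan homotopy colimit $(-)_{hJ}$ is functorial on $\T^J$, this induces a natural map $X_{hJ} \to \ast_{hJ}$. Now $\ast_{hJ}$ is the homotopy colimit over $J$ of a constant point-valued diagram, which is just the classifying space $BJ = |NJ|$ of the category $J$. Fixing once and for all an equivalence $BJ \simeq QS^0$, the structure map of $X_{hJ}$ is the composite $X_{hJ} \to BJ \xrightarrow{\simeq} QS^0$, which is manifestly natural in $X$.

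So everything rests on the identification $BJ \simeq QS^0 = \Omega^\infty\Sigma^\infty S^0$. A few structural facts about $J$ point the way: the automorphism group of $(\bfn_1,\bfn_2)$ is $\Sigma_{n_1}\times\Sigma_{n_2}$; every morphism of $J$ preserves the ``virtual cardinality'' $n_1-n_2$, and any two objects with the same virtual cardinality become connected after translating both up by a common object, so $\pi_0(BJ) \cong \Z$, generated by the class of $(\mathbf{1},\mathbf{0})$ with inverse the class of $(\mathbf{0},\mathbf{1})$; and the Day convolution product $\boxtimes_J$ makes $BJ$ a grouplike $E_\infty$-space. The decisive point is that $J$ is exactly Quillen's localization construction $\Sigma^{-1}\Sigma$ of the permutative category $\Sigma = \bigsqcup_n\Sigma_n$ of finite sets and bijections: unwinding the definition of the morphisms of $J$ recovers those of $\Sigma^{-1}\Sigma$, and the functor $\Sigma \to J$, $\bfn \mapsto (\bfn,\mathbf{0})$, realizes on classifying spaces the group completion $\bigsqcup_n B\Sigma_n \to BJ$. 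Combining the group-completion theorem with the Barratt--Priddy--Quillen theorem, which identifies the group completion of $\bigsqcup_n B\Sigma_n$ with $\Z \times B\Sigma_\infty^+ \simeq QS^0$, yields the required equivalence.

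The formal reduction in the first paragraph is essentially free, so the real obstacle is this last identification $BJ \simeq QS^0$. Concretely one has to verify that the translation functors $(\mathbf{1},\mathbf{0}) \boxtimes_J (-)$ exhibit $J$ as the localization $\Sigma^{-1}\Sigma$ -- equivalently, that $H_*(BJ)$ is obtained from $H_*(\bigsqcup_n B\Sigma_n)$ by inverting the class that shifts path components, which is precisely the hypothesis needed to apply the group-completion theorem -- and then invoke Barratt--Priddy--Quillen. A more hands-on route, bypassing any mention of $\Sigma^{-1}\Sigma$, would analyse the homotopy type of $BJ$ directly, for instance via Quillen's Theorem~A, and identify the basepoint component of $BJ$ with $B\Sigma_\infty^+$. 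Either way, once $BJ \simeq QS^0$ is in place the proposition follows at once, and the naturality in $X$ is formal.
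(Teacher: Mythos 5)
Your proposal is correct and follows essentially the same route as the paper: reduce to the terminal $J$-space so that $X_{hJ} \to \ast_{hJ} = BJ$, then identify $J$ with Quillen's $\Sigma^{-1}\Sigma$ and invoke Barratt--Priddy--Quillen to get $BJ \simeq QS^0$. The additional discussion of virtual cardinality, translation functors, and the group-completion theorem is a welcome elaboration of why $J \cong \Sigma^{-1}\Sigma$ gives the desired identification, but the structure of the argument is the same as in the paper.
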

\begin{proof}
It is not hard to see that $J$ is isomorphic to Quillen's category
$\Sigma^{-1}\Sigma$ \cite[Proposition 4.4]{sagave-sch} and its classifying
space is $QS^0$ by the Barratt-Priddy-Quillen result. Therefore $BJ$ is
$QS^0$. Every $J$-space has a map to the terminal $J$-space that is the
constant $J$-diagram on a point and this induces a map
$$ X_{hJ} \ra {\ast}_{hJ} = BJ \simeq QS^0.$$
\end{proof}
For any $I$-space $X$ we also get that $X_{hI}$ is a space over $BI$, but
as $I$ has an initial object this just gives a map to $BI \simeq \ast$, the
terminal object.

Let $C(\T^J)$ denote the category of commutative $J$-space monoids, \ie,
commutative monoids in $\T^J$. The following result is crucial:
\begin{thm} \, \cite[Theorem 4.11]{sagave-sch} \,
There is a model structure on $C(\T^J)$ such
that there is a Quillen
equivalence between $C(\T^J)$ and the category of $E_\infty$-spaces over
$BJ$.
\end{thm}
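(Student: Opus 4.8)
The plan is to obtain the model structure on $C(\T^J)$ by right induction along the free-forgetful adjunction between $\T^J$ and $C(\T^J)$, and then to establish the asserted Quillen equivalence with $E_\infty$-spaces over $BJ$ by combining the analogous result for $C(\T^I)$ (Quillen equivalence with $E_\infty$-spaces, via the positive flat model structure) with a comparison between $J$-spaces and $I$-spaces that records the extra data of a map to $BJ \simeq QS^0$. Concretely, I would first recall that $\T^J$ carries a \emph{positive flat model structure} analogous to the one on $\T^I$ used in \cite{BSS}; the generating (acyclic) cofibrations are built from representable $J$-spaces $J((\bfn_1,\bfn_2),-)$ with $\bfn_1,\bfn_2$ positive, smashed with generating (acyclic) cofibrations of spaces, and the weak equivalences are the $J$-equivalences, i.e.\ those maps $f$ with $f_{hJ}$ a weak homotopy equivalence. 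One checks this is cofibrantly generated, proper, and monoidal, and — crucially — that it satisfies the analogue of the \emph{positive flatness} condition guaranteeing that for a cofibrant object $X$ and any $n$ the $\Sigma_n$-action on $X^{\boxtimes_J n}$ is sufficiently free. This last point is exactly what makes the free commutative monoid functor $P_J(X) = \bigvee_{n\ge 0} X^{\boxtimes_J n}/\Sigma_n$ homotopically well-behaved.

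Next I would verify the hypotheses of the standard transfer/lifting criterion (as in \cite{schwede-shipleyconvenient}-style arguments, or the general machinery for right-induced model structures on algebras over an operad): one needs that $C(\T^J)$ is bicomplete, that the forgetful functor $U\colon C(\T^J)\to\T^J$ preserves filtered colimits and creates reflexive coequalizers, and that $P_J$ takes generating acyclic cofibrations to weak equivalences after transfinite composition and pushout — the "no bad Dyer–Lashof phenomenon" that fails for $\F_p$-chain complexes but holds here precisely because $BJ\simeq QS^0$-level combinatorics of $J$ make the symmetric powers of positively-cofibrant objects have free homotopy orbits. Granting this, $C(\T^J)$ acquires a right-induced model structure with fibrations and weak equivalences detected in $\T^J$.

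For the Quillen equivalence, the cleanest route is: (1) use the isomorphism $J\cong\Sigma^{-1}\Sigma$ and $BJ\simeq QS^0$ from the preceding proposition; (2) observe there is a forgetful functor $C(\T^J)\to C(\T^I)$ (restrict along a suitable monoidal functor $I\to J$, or rather note $J$ fibers over $I\times I$) and, on the other side, an "over $BJ$" structure map; (3) assemble a chain $C(\T^J)\rightleftarrows \bigl(\text{$E_\infty$-spaces}\bigr)/BJ$ by factoring through $C(\T^I)\simeq E_\infty\text{-spaces}$ and the slice construction. The slice category of $E_\infty$-spaces over the $E_\infty$-space $BJ$ inherits a model structure, and the claim is that the homotopy colimit functor $X\mapsto X_{hJ}$, together with its structure map to $BJ$, is the left (or right) member of a Quillen pair whose derived unit and counit are equivalences. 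One checks this on generators: a representable $J((\bfn_1,\bfn_2),-)$ has homotopy colimit weakly equivalent to a point-over-a-cell of $BJ$ corresponding to the class of $(\bfn_1,\bfn_2)$, and the free $E_\infty$-space functor is matched up accordingly.

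The main obstacle I anticipate is step two of the transfer argument — showing that pushouts along free maps $P_J(i)$ for a generating acyclic cofibration $i$ stay weak equivalences, equivalently that the filtration on $P_J$ applied to a pushout has associated graded pieces of the form $(\text{something contractible})_{h\Sigma_n}$. This is where the specific combinatorics of $J$ (the bijection data $\sigma$ on complements, which is what distinguishes $J$ from $I\times I$ and supplies the "extra $\Sigma_n$" identified in the remark after the definition of $\boxtimes_J$) must be used to see that $E\Sigma_n$-freeness holds in the positive flat structure; without the positivity restriction it would fail, exactly as the $\F_p$-chain-complex example in Section~\ref{sec:modelcats} warns. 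Everything else — bicompleteness, the monoid axiom, properness, and the generators-level verification of the Quillen equivalence — is routine once the flat model structure on $\T^J$ is in place and the comparison $BJ\simeq QS^0$ is invoked.
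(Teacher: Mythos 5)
The survey cites this as \cite[Theorem 4.11]{sagave-sch} without giving an argument, so the comparison is really against Sagave--Schlichtkrull's own proof. Your first two steps are sound in outline and match their strategy: equip $\T^J$ with a positive flat $J$-model structure, right-induce along the free commutative monoid adjunction $P_J \dashv U$, and observe that the genuine technical content is the pushout lemma ensuring that $P_J$ applied to a generating acyclic cofibration stays a weak equivalence. This works because positivity forces the $\Sigma_n$-orbits of $\boxtimes_J$-powers of cofibrant $J$-spaces to be homotopically correct, which is exactly how the characteristic-$p$ pathology you quote from Section~\ref{sec:modelcats} is avoided. Invoking $BJ\simeq QS^0$ is also a genuine ingredient of the argument.

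Where the proposal breaks down is the route to the Quillen equivalence. There is no restriction functor $C(\T^J)\to C(\T^I)$ that can serve as part of the asserted factorization: restricting a $J$-space to an $I$-space, whether along $\bfn\mapsto(\bf0,\bfn)$ or via the forgetful functor to $I\times I$-spaces, discards exactly the bijection data $\sigma$ on complements, and that is what produces the augmentation over $BJ$; a plain $I$-space only has a homotopy colimit lying over the \emph{contractible} space $BI$, and $C(\T^I)$ models $E_\infty$-spaces with no structure map to $QS^0$, so it is not Quillen equivalent to $C(\T^J)$ and cannot sit in the middle of such a chain. The actual comparison in Sagave--Schlichtkrull is direct: for a positively cofibrant commutative $J$-space monoid $M$, the Bousfield--Kan homotopy colimit $M_{hJ}$ carries a Barratt--Eccles $E_\infty$-structure together with an $E_\infty$-map to $\ast_{hJ}=BJ$, and the Quillen equivalence is assembled from a chain combining $(-)_{hJ}$ with a rectification step between Barratt--Eccles algebras in $\T^J$ and strictly commutative $J$-space monoids. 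Replacing your steps (2)--(3) by that direct comparison would bring the sketch in line with the cited proof.
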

Here, the $E_\infty$-structure is parametrized by the Barratt-Eccles operad.

For a (commutative) $J$-space monoid, one can associate units:
\begin{defn} \, \cite[\S 4]{sagave-sch} \,
Let $A$ be a $J$-space monoid. Then let $A^\times$ be the $J$-space monoid
with $A^\times(\bfn_1, \bfn_2)$ being the union of those components of
$A(\bfn_1, \bfn_2)$ that represent units in $\pi_0(A_{hJ})$.
\end{defn}

So now one has to construct a functor from spectra to $J$-spaces that sees
all the homotopy groups, not just the ones in non-negative degrees:

\begin{defn} \, \cite[(4.5)]{sagave-sch} \,
\begin{itemize}
\item
Let $\Omega^J$ be the functor from symmetric spectra to $J$-spaces that takes
a symmetric spectrum $E$ and sends it to the $J$-space with
$$ \Omega^J(\bfn_1, \bfn_2) = \Omega^{n_2}E_{n_1}.$$
\item
If $R$ is a symmetric ring spectrum, then its \emph{$J$-space of units} is
$$ GL_1^J(R) = (\Omega^J(R))^\times.$$
\end{itemize}
\end{defn}
Sagave and Schlichtkrull show that this is homotopically meaningful and that
for a commutative symmetric ring spectrum $R$, the units $GL_1^J(R)$ are
actually in $C(\T^J)$ \cite[\S 4]{sagave-sch}. Most  importantly, the
inclusion $GL_1^J(R) \hookrightarrow \Omega^J(R)$ realizes the inclusion of
graded units $\pi_*(R)^\times$ into $\pi_*(R)$ for positively fibrant $R$.

Hence, for instance $GL_1^I(\KU)$ (and any other model of the 'usual' units)
only detects the units $\pm 1$ in $\pi_0(\KU)$ whereas $GL_1^J(\KU)$ also
detects the Bott class.

\begin{rem}
\begin{enumerate}
\item[]
\item
John Rognes developed the concept of logarithmic ring spectra and in
\cite{sagave} and \cite{rss} this concept is fully explored with the
help of graded units. The idea is that you want a spectrum that sits between
a commutative ring spectrum like $ku$ and its localization $\KU$, so you
remember the Bott class as the extra datum of a logarithmic structure. This
concept has its origin in algebraic geometry and is useful in stable
homotopy theory for instance for obtaining localization sequences in
topological Hochschild homology \cite{rss}.
\item
In \cite{ssthom} Sagave and Schlichtkrull use graded units adapted to the
setting of orthogonal spectra, $GL_1^W$,  to construct
\emph{graded Thom spectra}
associated to virtual  vector  bundles, \ie, associated to a map
$f \colon X \ra \Z \times BO$ in such a way that uses the
$E_\infty$-structure on $\Z \times BO$. They use this for orientation
theory and relate $GL_1^W$-orientations to logarithmic
structures. They provide an $E_{\infty}$-Thom isomorphism 
that allows to compute the homology of spectra appearing in
connection with logarithmic ring spectra \cite[\S \S \, 7,8]{ssthom}.  
\end{enumerate}
\end{rem}

\section{Constructing commutative ring spectra from bipermutative categories}
\label{sec:segal}

In section \ref{sec:thom} we saw that Thom spectra give rise to
commutative ring spectra. Algebraic K-theory is another
machine that takes a commutative ring (spectrum) $R$ and produces a
commutative ring spectrum $K(R)$. In this section we focus on a classical
construction that takes a
small bipermutative category $\R$ and turns it into a commutative ring
spectrum. This construction goes back to Segal \cite{catsandco}; its
multiplicative
properties were investigated by May
\cite{may-einfty,may-perm,may-mult,may-biperm}, Shimada-Shimakawa
\cite{shishi}, Woolfson \cite{woolfson} and 
Elmendorf-Mandell \cite{e-mandell}.

We sketch a simplified version of the construction, present some
important examples and
refer to  \cite{e-mandell} for a discussion of the multiplicative
properties.

\begin{defn}
A \emph{permutative category} $(\C, \oplus, 0, \tau)$ is a category
$\C$ together with an object $0$ of $\C$, a functor $\oplus \colon \C
\times \C \ra \C$  and a natural isomorphism $\tau_{C_1,C_2} \colon
C_1 \oplus C_2 \ra C_2 \oplus C_1$ for all objects $C_1, C_2$ of $\C$
such that
\begin{itemize}
\item
$\oplus$ is strictly associative, \ie, for all objects $C_1, C_2, C_3$
of $\C$
$$ C_1 \oplus (C_2 \oplus C_3) = (C_1 \oplus C_2) \oplus C_3.$$
\item
$0$ is a strict unit, \ie, for all objects $C$ of $\C$: $C \oplus 0 =
C = 0 \oplus C$.
\item
$\tau^2$ is the identity, \ie, for all objects $C_1, C_2$ of $\C$
the composite
$$\xymatrix@1{{C_1 \oplus C_2} \ar[r]^{\tau_{C_1,C_2}} &
  {C_2 \oplus C_1} \ar[r]^{\tau_{C_2,C_1}} & {C_1 \oplus C_2}}$$
is the identity on $C_1 \oplus C_2$.
\item
The diagrams
$$ \xymatrix{
{C \oplus 0} \ar^{\tau_{C,0}}[rr] \ar@{=}[dr] & &  {0 \oplus C} \ar@{=}[dl] \\
& {C, } &
}
\quad
\xymatrix{
{C_1 \oplus C_2 \oplus C_3} \ar[dr]_{\id_{C_1} \oplus \tau_{C_2, C_3}}
\ar[rr]^{\tau_{C_1 \oplus C_2, C_3}} & &
{C_3 \oplus C_1 \oplus C_2} \\
& {C_1 \oplus C_3 \oplus C_2} \ar[ru]_{\phantom{bla} \tau_{C_1, C_3} \oplus \id_{C_2}}&
}$$
commute for all objects $C, C_1, C_2, C_3$
of $\C$.
\end{itemize}
\end{defn}
We work with small permutative categories, \ie, we require that the
objects of $\C$ for a set (and not a proper class). We recall Segal's 
construction from \cite[\S 2]{catsandco}:
\begin{defn}
Let $\C$ be a small permutative category and let $X$ be a finite set
with basepoint $+ \in X$. Let $\C(X)$ be the category whose objects
are families $(C_S, \varrho_{S,T})$ where
\begin{itemize}
\item
$S \subset X$, $+ \notin S$,
\item
$S$ and $T$ are pairs of such subsets that are disjoint,
\item
the $C_S$ are objects of $\C$ and $\varrho_{S,T}$ is an isomorphism in
$\C$
$$ \varrho_{S,T}\colon C_S \oplus C_T \ra C_{S \cup T}.$$
\item
For $S = \varnothing$: $C_\varnothing = 0$ and $\varrho_{\varnothing,
  T} = \id_{C_T}$ for all $T$ and
\item
for pairwise disjoint $S, T, U$ that don't contain $+$ the following
diagrams commute:
$$ \xymatrix{
{C_S \oplus C_T} \ar[r]^{\varrho_{S,T}} \ar[d]_\tau & {C_{S \cup T}} \ar@{=}[d] \\
{C_T \oplus C_S} \ar[r]^{\varrho_{T,S}} & {C_{T \cup S},}}
\qquad
\xymatrix{
{C_S \oplus C_T \oplus C_U} \ar[rr]^{\varrho_{S,T} \oplus \id_{C_U}}
\ar[d]_{\id_{C_S} \oplus \varrho_{T,U}}  & & {C_{S \cup T} \oplus C_U}
\ar[d]^{\varrho_{S \cup T, U}} \\
{C_S \oplus C_{T \cup U}} \ar[rr]^{\varrho_{S, T \cup U}} & & {C_{S
    \cup T \cup U.}}
}$$
\end{itemize}
Morphisms $\alpha  \colon (C_S, \varrho_{S,T}) \ra (C'_S,
\varrho'_{S,T})$ consist of a family of morphisms $\alpha_S \in
\C(C_S, C'_S)$ for all $S \subset X$ with $+ \notin S$ such that
$\alpha_\varnothing = \id_0$ and for all $S, T \in X$ with $+ \notin
S, T$ and $S \cap T = \varnothing$ the diagram
$$\xymatrix{
{C_S \oplus C_T} \ar[r]^{\varrho_{S,T}} \ar[d]_{\alpha_S \oplus
  \alpha_T}& {C_{S \cup T}} \ar[d]^{\alpha_{S \cup T}} \\
{C'_S \oplus C'_T} \ar[r]^{\varrho'_{S,T}} & {C'_{S \cup T}}
}$$
commutes.
\end{defn}
Thus up to isomorphism, every object $C_S$ for $S=\{x_1,\ldots,x_n\}$
can be decomposed as
$$ C_S \cong C_{\{x_1\}} \oplus \ldots \oplus C_{\{x_n\}}$$
by an iterated application of the isomorphisms $\varrho$, but these
isomorphisms are part of the data. Segal shows \cite[Corollary
2.2]{catsandco} that this construction gives rise to a so-called
$\Gamma$-space (see \cite[Definition 1.2]{catsandco} for a
definition) that sends a finite pointed set $X$ to the 
classifying space of $\C(X)$. Every
$\Gamma$-space gives rise to a spectrum, and we
denote the spectrum associated to $\C$ by $H\C$.

\begin{rem}
Segal's construction actually works for symmetric monoidal categories and
it produces a spectrum whose associated infinite loop space is the group
completion of the
classifying space of the category $\C$, $B\C$, and the latter is
the geometric realization of the nerve of $\C$.
\end{rem}
\begin{defn} A \emph{bipermutative category}
$\R$ is a category with two permutative
category structures:
$(\R, \oplus, 0_\R, \tau_\oplus)$ and $(\R, \otimes, 1_\R, \tau_\otimes)$
that are compatible in the following sense:
\begin{enumerate}
\item
$$ 0_\R \otimes C = 0_\R = C \otimes 0_\R$$
for all objects $C$ of $\R$.
\item
For all objects $A, B, C$ we have an equality between $(A \oplus B) \otimes C$
and $A \otimes C \oplus B \otimes C$ and the diagram
$$ \xymatrix{
{(A \oplus B) \otimes C} \ar@{=}[r] \ar[d]_{\tau_\oplus \otimes \id} &
{A \otimes C \oplus B \otimes C} \ar[d]^{\tau_\oplus}\\
{(B \oplus A) \otimes C} \ar@{=}[r] & {B \otimes C \oplus A \otimes C}
}$$
commutes.
\item
We define the distributivity isomorphism
$d_\ell \colon A \otimes (B \oplus C) \ra A \otimes B \oplus A \otimes C$ for
all $A, B, C$ in $\R$ via
the following diagrem
$$ \xymatrix{
{A \otimes (B \oplus C)} \ar[r]^{\tau_\otimes} \ar@{.>}[d]_{d_\ell} &
{(B \oplus C) \otimes A} \ar@{=}[d] \\
{A \otimes B \oplus A \otimes C} & {B \otimes A \oplus C \otimes A}
\ar[l]^{\tau_\otimes \oplus \tau_\otimes}
}$$
then the diagram
$$\xymatrix{
{(A \oplus B) \otimes (C \oplus D)} \ar@{=}[dd] \ar[dr]^{d_\ell}& {} \\
{} & {(A \oplus B) \otimes C \oplus (A \oplus B) \otimes D} \ar@{=}[dd]\\
{A \otimes (C \oplus D) \oplus B \otimes (C \oplus D)}
\ar[dd]_{d_\ell \oplus d_\ell}& {} \\
{} & {A \otimes C \oplus B \otimes C \oplus A \otimes D \oplus B \otimes D} \\
{A \otimes C \oplus A \otimes D \oplus B \otimes C \oplus B \otimes D}
\ar[ur]^{\id \oplus \tau_\oplus \oplus \id}& {}
}$$
commutes.
\end{enumerate}
\end{defn}
This definition is taken from \cite[Definition VI.3.3, p.~154]{may-einfty}.
The definition in \cite{e-mandell} is less strict, but bipermutative
categories in the above sense are also bipermutative in the sense of
\cite[Definition 3.6]{e-mandell}.
We refer to Elmendorf and Mandell for a proof
that for a bipermutative category $\R$, one
actually obtains a commutative ring spectrum $H\R$:
\begin{thm} \, \cite[Corollary 3.9]{e-mandell} \, 
If $\R$ is a bipermutative category, then $H\R$ is equivalent to a strictly
commutative symmetric ring spectrum.
\end{thm}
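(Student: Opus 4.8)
The plan is to realize Segal's assignment $\C \mapsto H\C$ as a \emph{multifunctor} that carries commutative monoids to commutative monoids, and to recognize the data of a bipermutative category as precisely a commutative monoid in its source. Concretely, I would work with the multicategory $\mathsf{Perm}$ whose objects are small permutative categories and whose $k$-ary morphisms $(\C_1,\ldots,\C_k) \to \mathcal{D}$ are functors $\C_1 \times \cdots \times \C_k \to \mathcal{D}$ equipped with coherent distributivity isomorphisms making them additive (taking $\oplus$ in each variable separately to $\oplus$ in $\mathcal{D}$), the $0$-ary morphisms out of the empty tuple being the objects of $\mathcal{D}$ and composition being ``substitute and distribute''. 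The target would be the category $\Sp^\Sigma$ of symmetric spectra, regarded as a multicategory via its smash product.

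First I would verify that a bipermutative structure on $\R$ amounts to a commutative monoid structure on $\R$ in $\mathsf{Perm}$: axiom (a) says $0_\R$ is absorbing; axioms (b) and (c) say that $\otimes \colon \R \times \R \to \R$ is a $2$-linear morphism, distributivity together with its coherence; the unit object $1_\R$ supplies the $0$-ary morphism; and the monoid associativity and commutativity are exactly the associativity and symmetry of $\otimes$ together with the coherences packaged into the second permutative structure $(\R,\otimes,1_\R,\tau_\otimes)$. This is the strict model of May; the looser Elmendorf-Mandell notion yields the same kind of commutative-monoid data up to coherent isomorphism.

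The technical heart --- and the step I expect to be the main obstacle --- is to upgrade Segal's construction to a multifunctor $K \colon \mathsf{Perm} \to \Sp^\Sigma$ agreeing with $\C \mapsto H\C$ on objects. From a $k$-linear functor $\C_1 \times \cdots \times \C_k \to \mathcal{D}$ one must produce a map of symmetric spectra $H\C_1 \wedge \cdots \wedge H\C_k \to H\mathcal{D}$, naturally in the inputs and strictly compatibly with multicategorical composition and the symmetric-group actions. The mechanism is that evaluating the $\Gamma$-space $\mathcal{D}(-)$ on a smash of finite pointed sets $X_1 \wedge \cdots \wedge X_k$ receives a canonical functor out of $\C_1(X_1) \times \cdots \times \C_k(X_k)$ built from the distributivity data, after which one takes classifying spaces, assembles the resulting multisimplicial object, and uses that the smash product on $\Sp^\Sigma$ is a Day convolution, so that multilinear input is exactly what is wanted. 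The delicate part is checking that this really is a multifunctor on the nose: that the induced smash-product maps are coherently associative, unital --- the $0$-ary datum being the unit map $S \to H\mathcal{D}$ selecting $1_\mathcal{D}$ --- and $\Sigma_k$-equivariant at the point-set level, with the coherence isomorphisms of $\mathsf{Perm}$ matched against those of $\Sp^\Sigma$. This bookkeeping is exactly what forces one to work in symmetric (or orthogonal) spectra, whose smash product is genuinely associative and commutative, rather than rigidifying an $E_\infty$-structure after the fact.

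Finally, multifunctors preserve commutative monoids, and commutative monoids in $\Sp^\Sigma$ viewed as a multicategory are precisely commutative symmetric ring spectra; hence $K\R$ is a strictly commutative symmetric ring spectrum whose underlying symmetric spectrum is equivalent to $H\R$, which is the assertion. A fallback that sidesteps the construction of the multifunctor is to feed $\R$ into the classical multiplicative infinite-loop-space machine of May and Shimada-Shimakawa to obtain an $E_\infty$-ring spectrum and then invoke the rigidification results quoted earlier, for instance \cite[Proposition II.4.5]{EKMM} or \cite[Remark 0.14]{MMSS}, to replace it by an equivalent commutative ring spectrum; but the multifunctor route of \cite{e-mandell} produces the strict symmetric-spectrum model directly.
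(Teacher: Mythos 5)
The paper itself does not contain a proof --- it defers entirely to Elmendorf--Mandell, whose strategy is precisely the multifunctor route you outline. But your central reduction has a gap. A bipermutative category $\R$ is \emph{not} a strict commutative monoid in the multicategory $\mathsf{Perm}$: commutativity of a monoid object in a symmetric multicategory requires the multiplication to be strictly $\Sigma_2$-invariant, so here it would require $A \otimes B = B \otimes A$ on the nose, with $\tau_\otimes$ the identity. In May's axioms the \emph{right} distributivity really is an equality (so you are right that $\otimes$ is a legitimate $2$-ary morphism of $\mathsf{Perm}$), but $\tau_\otimes$ is only a coherent natural isomorphism; in the bipermutative category $\mathcal{E}$ of finite sets, for instance, $\tau_\otimes$ on $\mathbf{n} \otimes \mathbf{m} = \mathbf{nm}$ is a nontrivial shuffle permutation. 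Consequently ``multifunctors preserve commutative monoids'' does not apply, and your conclusion that $K\R$ is a \emph{strictly} commutative symmetric ring spectrum does not follow: tracing through the construction, the two bilinear maps $K\R \wedge K\R \to K\R$ coming from $\otimes$ and from $\otimes$ precomposed with the swap differ by the natural transformation $\tau_\otimes$, hence are homotopic but not equal.

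What Elmendorf--Mandell actually show is that bipermutative categories are algebras over a specific categorical $E_\infty$-operad living inside $\mathsf{Perm}$ (this is what their looser Definition 3.6 is tailored to, and into which May's strict bipermutative categories embed); the enriched multifunctor $K$ carries such algebras to $E_\infty$-ring symmetric spectra; and the ``equivalent to a strictly commutative symmetric ring spectrum'' in the statement is then supplied by the comparison/rigidification result, e.g.\ \cite[Remark 0.14]{MMSS}. So your closing ``fallback'' paragraph is, up to which machine produces the $E_\infty$-spectrum, the actual argument rather than an alternative to it, and the earlier claim of on-the-nose commutativity should be withdrawn.
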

Segal's construction enables us to find small and explicit models for
certain connective commutative ring spectra. Famous examples of bipermutative
categories and their associated commutative ring spectra
are the following:
\begin{enumerate}
\item
If $R$ is a commutative discrete ring, then the category $\R_R$ which has the
elements of $R$ as
objects and only identity morphisms is a bipermutative category with
the addition in the ring being $\oplus$ and the multiplication being
$\otimes$. The associated spectrum, $H\R_R$ is the Eilenberg-Mac\,Lane
spectrum of the ring $R$, $\HR$.
\item
Let $\mathcal{E}$ denote the bipermutative category of finite sets
whose objects are the finite sets $\mathbf{n} = \{1,\ldots,n\}$ for
$n \in \mathbb{N}_0$. By convention $\mathbf{0}$ is the empty set.
The morphisms in $\mathcal{E}$ are
$$ \mathcal{E}(\mathbf{n},\mathbf{m}) = \left\{
\begin{array}{cc}
\varnothing, & n \neq m, \\
\Sigma_n, & n = m.
\end{array}
\right.$$ For the full structure see \cite[VI, Example
5.1]{may-einfty}. Here, $H\mathcal{E}$ is the sphere spectrum, $S$.
\item
The bipermutative category of complex vector spaces,
$\mathcal{V}_\mathbb{C}$, with objects the natural numbers with zero
and morphisms
$$ \mathcal{V}_\mathbb{C}(n,m) = \left\{\begin{array}{cc}
\varnothing, & n \neq m, \\
U(n), &  n=m
\end{array} \right. $$
is bipermutative. On objects we set $n \oplus m = n+m$ and $n \otimes
m = nm$ and on morphism we use the block sum and the tensor product of
matrices. The associated spectrum is $H\mathcal{V}_\mathbb{C} = ku$,
the connective version of topological complex K-theory. Its real
analog, $\mathcal{V}_\mathbb{R}$, gives a model for connective
topological real K-theory, $ko$. You can also work with the general
linear group instead of the unitary or orthogonal group.
\item
If $R$ is a discrete commutative ring, then we define the category
$F_R$ as the one with objects $\mathbb{N}_0$ again. As morphisms we have
$$ F_R(n,m) = \begin{cases}
\varnothing, & n \neq m,\\
GL_n(R), &  n=m.
\end{cases}$$
This category is often called \emph{the small category of free $R$-modules}.
Its spectrum is the \emph{free algebraic K-theory of $R$}, $K^f(R)$. Its
homotopy groups agree with the algebraic K-groups of $R$ from degree $1$ on.
\end{enumerate}

\section{From topological Hochschild to topological Andr\'e-Quillen homology}
\label{sec:thhetc}
For rings and algebras Hochschild homology contains a lot of information.
For commutative rings and algebras Andr\'e-Quillen homology is the
adequate tool. There are spectrum level versions of these homology theories:
topological Hochschild homology, $\THH$, and topological Andr\'e-Quillen
homology, $\TAQ$.

We can determine classes in the algebraic K-theory of a ring spectrum
using the trace to topological
Hochschild homology or to topological cyclic homology:
\begin{equation} \label{eq:trace} tr \colon K(R) \ra \THH(R). \end{equation}
For instance the trace from $K(\mathbb{Z})$ to $\THH(\mathbb{Z})$
detects important classes. B\"okstedt, Madsen and Rognes
\cite{boemad,rognestrace}
show for instance that
$$\text{tr}\colon K_{2p-1}(\Z) \ra \THH_{2p-1}(\Z) \cong \Z/p\Z  $$
is surjective for all primes $p$.

We give a construction of topological Hochschild homology and more generally,
for commutative ring spectra $R$  we define $X \otimes R$ for $X$ a finite
pointed simplicial set. We give some examples of calculations of such
$X$-homology groups of $R$ and tell you about topological Hochschild
cohomology as a derived center of an algebra spectrum. We define topological
Andr\'e-Quillen homology
and we will see applications to Postnikov towers for commutative ring
spectra later in \ref{sec:obs}.

\subsection{$\THH$ and friends}
Let $X$ be a finite pointed simplicial set and let $R$ be a cofibrant
commutative ring spectrum.
\begin{defn} \label{def:xtensorr}
We denote by $X \otimes R$ the simplicial spectrum with
$$ (X \otimes R)_n = \bigwedge_{x \in X_n} R.$$
By slight abuse of notation we will use the same symbol for the geometric
realization of $X \otimes R$.
\end{defn}
\begin{rem} \label{rem:augmentationTHH}
\begin{itemize}
\item[]
\item
As the smash product is the 
coproduct in $\C_S$, the simplicial structure maps of $X \otimes R$ are
induced from the ones on $X$.
\item
As $X$ is pointed, $X \otimes R$ comes with maps
$$ R \ra X \otimes R \ra R$$
whose composition is the identity.
\item
The commutative multiplication on $R$ induces a commutative multiplication on
$X \otimes R$, hence $X \otimes R$ is an augmented commutative (simplicial)
$R$-algebra spectrum.
\item
One could also work with the fact that the spectra of \cite{EKMM} are
tensored over topological spaces or similarly, that symmetric spectra
\cite{HSS} in
topological spaces are enriched over simplicial sets and over
topological spaces. This gives an equivalent situation. It is for
instance shown in \cite[Corollary VII.3.4]{EKMM} that $|X \otimes A|
\simeq |X| \otimes A$ for simplicial spaces $X$ and commutative
$R$-algebra spectra $A$.
\item
The above definition can be extended to tensoring with an arbitrary pointed
simplicial sets by
expressing such a simplicial set as the colimit of its finite pointed
simplicial subcomplexes.
\end{itemize}
\end{rem}
There are many important special cases of this construction.
\begin{defn}
\begin{enumerate}
\item[]
\item
For the simplicial $1$-sphere $X = \mathbb{S}^1$ the commutative $R$-algebra
spectrum  $\mathbb{S}^1 \otimes R$ is the \emph{topological Hochschild
homology of $R$} and is denoted by $\THH(R)$.
\item
More generally, for an $n$-sphere, we denote by $\THH^{[n]}(R)$ the
spectrum $\mathbb{S}^n \otimes R$ and this is called \emph{topological
Hochschild homology of order $n$}.
\item
If $\mathbb{T}^n$ denotes the torus $(\mathbb{S}^1)^n$, then
$\mathbb{T}^n \otimes R$ is the \emph{$n$-torus homology of $R$}.
\end{enumerate}
\end{defn}
For the small model of the simplicial $1$-sphere with just one non-degenerate
$0$ and $1$ simplex we have $(\mathbb{S}^1)_n = \{0,1,\ldots, n\}$ and
the simplicial spectrum $\mathbb{S}^1 \otimes R$ is
precisely the cyclic bar construction on $R$:
$$ \xymatrix@1{{R \, } \ar[r] & {\, R \wedge R \, }
\ar@<0.5ex>[l] \ar@<-0.5ex>[l]
\ar@<0.5ex>[r] \ar@<-0.5ex>[r] &
{\, R \wedge R \wedge R \, } \ar@<-1ex>[l] \ar[l] \ar@<1ex>[l]
\ar@<1ex>[r] \ar[r] \ar@<-1ex>[r]
& { \, \ldots}  \ar@<1.5ex>[l] \ar@<0.5ex>[l] \ar@<-0.5ex>[l] \ar@<-1.5ex>[l] } $$
where the degeneracy map $s_i \colon R^{n+1} \ra R^{n+2}$ inserts the unit
map $\eta \colon S \ra R$ after the $i$th factor of $R$  and
the face maps $d_i \colon  R^{n+1} \ra R^n$ for $0 \leq i < n$ are given by
the multiplication in $R$ of the $i$th and $(i+1)$st smash factor. The last
face map $d_n$ cyclically permutes the smash factors to bring the last one to
the front and then it multiplies the former factors with number $n$ and $0$.

As for Hochschild homology you should think about this as a genuine cyclic
object:

\begin{center}
\setlength{\unitlength}{1cm}
\begin{picture}(5,3)
\put(0,2){$R$}
\put(1,2.3){$R$}
\put(0,1){$R$}
\put(0.5,2.2){$\wedge$}
\put(1.5,2.3){$\wedge$}
\put(0,1.5){$\wedge$}
\put(0.5,0.7){$\wedge$}
\put(1,0.6){$\cdot$}
\put(1.2,0.55){$\cdot$}
\put(1.4,0.6){$\cdot$}
\put(2,2.1){$\cdot$}
\put(2.1,1.9){$\cdot$}
\put(2.1,1.7){$\cdot$}
\end{picture}
\end{center}

The original definition of $\THH$ is due to Marcel B\"okstedt
\cite{boe-thh}.  
McClure, Schw\"anzl and Vogt \cite{MSV} show that for an
$E_\infty$-ring spectrum $R$, $\THH(R)$, is equivalent to tensoring $R$
with the topological $1$-sphere. Kuhn systematically studies constructions
like $X \otimes R$ in a reduced setting \cite{kuhn} for pointed spaces $X$.
So the above definition is an unreduced variant of this that uses
simplicial sets instead of topological spaces.

\begin{lem} \label{lem:prod}
Let $X$ and $Y$ be finite simplicial pointed sets. Then
$$ (X \times Y) \otimes R \simeq X \otimes (Y \otimes R).$$
\end{lem}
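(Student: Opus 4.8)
The plan is to reduce the statement to the defining formula for $X \otimes R$ from Definition~\ref{def:xtensorr} together with the fact that smashing over $S$ is the coproduct in $\C_S$, and then to a purely combinatorial identity of pointed simplicial sets. First I would fix the levelwise description: for each $n$ we have
$$ \bigl((X \times Y) \otimes R\bigr)_n = \bigwedge_{(x,y) \in (X \times Y)_n} R = \bigwedge_{(x,y) \in X_n \times Y_n} R, $$
while on the right-hand side, applying the definition twice,
$$ \bigl(X \otimes (Y \otimes R)\bigr)_n = \bigwedge_{x \in X_n} (Y \otimes R) = \bigwedge_{x \in X_n} \Bigl(\bigwedge_{y \in Y_n} R\Bigr). $$
The key point is that $(X \times Y)_n = X_n \times Y_n$ since products of simplicial sets are computed levelwise, and that for an indexed family the smash product (being the coproduct in $\C_S$) is associative in the obvious way, so $\bigwedge_{x \in X_n}\bigwedge_{y \in Y_n} R \cong \bigwedge_{(x,y) \in X_n \times Y_n} R$. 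These isomorphisms are natural in $[n]$ because the simplicial structure maps on a tensor $Z \otimes R$ are, by the first bullet of Remark~\ref{rem:augmentationTHH}, induced entirely from those on $Z$, and the bijection $(X\times Y)_n \cong X_n \times Y_n$ is compatible with the simplicial operators on $X$, $Y$ and $X \times Y$. Hence one obtains an isomorphism of simplicial spectra $(X \times Y)\otimes R \cong X \otimes (Y \otimes R)$, and passing to geometric realization gives the asserted equivalence.

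There is one genuine subtlety that I expect to be the main obstacle, and it is a homotopical rather than a combinatorial one: the inner object $Y \otimes R$ need not be cofibrant as a commutative ring spectrum, whereas Definition~\ref{def:xtensorr} and the good homotopical properties of $X \otimes (-)$ are stated for \emph{cofibrant} commutative ring spectra. So strictly speaking $X \otimes (Y \otimes R)$ on the nose is the tensor with a possibly non-cofibrant algebra, and one must check that this still computes the right homotopy type. The way I would handle this is to observe that $Y \otimes R$ is built as a (homotopy) colimit of copies of $R$ smashed over $S$, hence is already homotopically well-behaved when $R$ is cofibrant — more precisely, one takes a cofibrant replacement $(Y \otimes R)^c \xrightarrow{\sim} Y \otimes R$ and uses that $X \otimes (-)$ preserves weak equivalences between suitably cofibrant commutative $R$-algebras (this is exactly the kind of statement underlying the invariance of $\THH$ and of $\THH^{[n]}$, and it is the reason one works in a model structure in which the relevant smash products are derived). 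Alternatively, and perhaps more cleanly, one can phrase everything $\infty$-categorically: $X \otimes R$ is the colimit of the constant diagram at $R$ over the $\infty$-groupoid (space) $X$ in commutative ring spectra, and then the statement is simply the Fubini theorem for colimits, $\colim_{X \times Y} R \simeq \colim_X \colim_Y R$, together with $X \times Y$ modelling the product of the underlying spaces.

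To keep the argument self-contained within the stated framework, I would present it in two layers. The combinatorial layer: prove the isomorphism of simplicial spectra as above, using only $(X\times Y)_n = X_n \times Y_n$, naturality, and associativity of the iterated smash. The homotopical layer: invoke that for cofibrant commutative $R$ the functor $Z \mapsto Z \otimes R$ sends levelwise equivalences of finite pointed simplicial sets to equivalences and that $Y \otimes R$ is cofibrant enough (or replace it by a cofibrant model and note the replacement does not change the homotopy type of $X \otimes (-)$ applied to it), citing the same model-categorical input that legitimizes Definition~\ref{def:xtensorr} and Remark~\ref{rem:augmentationTHH}, in particular the compatibility $|X \otimes A| \simeq |X|\otimes A$ from \cite[Corollary VII.3.4]{EKMM}. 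The realization step is then harmless since geometric realization commutes with the iterated smash of simplicial spectra up to equivalence. I do not expect to need anything beyond these ingredients; the only place where care is required is precisely the cofibrancy bookkeeping for the inner tensor, and that is resolved by the standard observation that tensoring a cofibrant commutative ring spectrum with a finite simplicial set again yields an object with the correct homotopy type.
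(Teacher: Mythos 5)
Your ``combinatorial layer'' claims an isomorphism of simplicial spectra
$(X\times Y)\otimes R \cong X\otimes(Y\otimes R)$, and this is where the
argument breaks. Once $Y\otimes R$ has been realized (which is what
Definition~\ref{def:xtensorr} requires in order to even apply $X\otimes(-)$
to it — the input must be a commutative ring spectrum, not a simplicial one),
level $n$ of $X\otimes(Y\otimes R)$ is the smash
$\bigwedge_{x\in X_n}\lvert Y\otimes R\rvert$, not
$\bigwedge_{x\in X_n}\bigwedge_{y\in Y_n}R$. The identity you write down,
$\bigwedge_{(x,y)\in X_n\times Y_n}R \cong \bigwedge_{x\in X_n}\bigwedge_{y\in Y_n}R$,
is correct, but it exhibits $(X\times Y)\otimes R$ as the \emph{diagonal}
of the bisimplicial spectrum
$([m],[\ell])\mapsto\bigwedge_{x\in X_m}\bigwedge_{y\in Y_\ell}R$,
whereas $X\otimes(Y\otimes R)$ is the \emph{iterated} realization of that
same bisimplicial spectrum. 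These two simplicial spectra are not isomorphic;
they only become equivalent after realization, by the standard theorem that
the realization of the diagonal of a bisimplicial object agrees (up to weak
equivalence) with the double realization. That theorem is the actual content
of the lemma, and your proof never invokes it — you present the level-$n$
identification as if it directly gave a levelwise isomorphism, which it does
not. This is precisely the step the paper's proof highlights when it says
the displayed smash ``is the diagonal of the bisimplicial spectrum
$([m],[\ell])\mapsto(X\otimes((Y\otimes R)_\ell))_m$.''

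Your second, $\infty$-categorical sketch (tensoring as a colimit over the
space $\lvert X\rvert$ and invoking Fubini for colimits,
$\colim_{X\times Y}R\simeq\colim_X\colim_Y R$) is a legitimate alternative
route, and in that framework the diagonal issue disappears because one is
working with homotopy colimits from the start. It is, however, a genuinely
different argument from the one the paper gives, and as stated it is only
a sketch; if you go that way you should say explicitly that
$\lvert X\times Y\rvert\simeq\lvert X\rvert\times\lvert Y\rvert$ and that
the relevant colimit is taken in the $\infty$-category of commutative ring
spectra. The cofibrancy bookkeeping you raise is a reasonable side concern
but is not the obstruction here — the obstruction is that your claimed
simplicial isomorphism does not exist, and the bisimplicial diagonal
theorem is what bridges the gap.
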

\begin{proof}
Observe that
$$ ((X \times Y) \otimes R)_n = \bigwedge_{(x,y) \in X_n \times Y_n} R \cong
\bigwedge_{x \in X_n}(\bigwedge_{y \in Y_n} R)$$
and this is the diagonal of the bisimplicial spectrum
$$ ([m],[\ell]) \mapsto (X \otimes ((Y \otimes R)_\ell))_m$$
in degree $n$.
\end{proof}
One of the important features of $\THH(R)$ is that it receives a trace map from
algebraic K-theory (see \eqref{eq:trace}) that we can now write as
$$ tr \colon K(R) \ra \mathbb{S}^1 \otimes R. $$
Taking higher dimensional tori gives targets
for iterated trace maps. Algebraic K-theory of a commutative ring
spectrum is again a commutative ring spectrum and the trace map
is a map of commutative ring spectra, hence one can iterate the
process of forming K-theory and traces. If we denote by $K^n(R)$ the
$n$-fold iteration, 
then -- as we have the product formula from Lemma \ref{lem:prod} --
we get an iterated trace to $\mathbb{T}^n \otimes R$. Explicitly, for $n=2$
this is
$$ K(K(R))  \ra \mathbb{S}^1 \otimes (\mathbb{S}^1 \otimes R) \simeq
(\mathbb{S}^1 \times \mathbb{S}^1) \otimes R = \mathbb{T}^2 \otimes R.$$

There are variants of Definition \ref{def:xtensorr}: As we work with
pointed simplicial sets, we can glue an $R$-module to the base point and use
the $R$-module structure for the face maps. A second variant is to
work relative to some commutative ring spectrum $R$: 
In \ref{def:xtensorr} the smash products were
over the sphere spectrum, but if we work with a commutative
$R$-algebra spectrum $A$,
then we can take smash products over $R$ instead of $S$. Recall that $\wedge_R$
is the coproduct in the category of commutative $R$-algebra spectra, $\C_R$.

\begin{defn}
Let $R$ be a cofibrant commutative ring spectrum, $A$ a cofibrant commutative
$R$-algebra spectrum, $M$ an $A$-module spectrum over $R$ and let $X$ be a
finite pointed simplicial set. We denote by $\call^R_X(A;M)$ the
simplicial spectrum with 
$$ \call^R_X(A;M)_n = M \wedge_R {\bigwedge_{x\in X_n \setminus
    *}}\raisebox{-5pt}{}_{\!\!\!\!\!\!R} A. $$ 
We call $\call^R_X(A;M)$ the \emph{Loday construction of $A$ over $R$ with
coefficients in $M$}.
\end{defn}
As $M$ is just an $A$-module spectrum, the resulting simplicial spectrum and
also its realization carries an $A$-module structure over $R$, but no
multiplicative structure in general. However, if we place a commutative
$A$-algebra $C$ at the basepoint, then the resulting spectrum is an augmented
commutative $C$-algebra spectrum.

We will see later in Section \ref{sec:etale} that for instance
$$ \THH^R(A) := \call^R_{\mathbb{S}^1}(A)$$
measures properties of $A$ as a commutative $R$-algebra spectrum. The
case of $X = \mathbb{S}^0$ gives 
$$  \call^R_{\mathbb{S}^0}(A) = A \wedge_R A$$
so there is a K\"unneth spectral sequence \cite[IV.4.1]{EKMM} for calculating
its homotopy groups.

An important example of a Loday construction is Pirashvili's
construction of \emph{higher order Hochschild homology}.
He works with discrete
commutative $k$-algebras $A$ and  $A$-modules $M$  and defines
$\HH_X^k(A; M)$ \cite[\S 5.1]{pira}. For $X = \mathbb{S}^n$ this is his
notion of higher order Hochschild homology (in his notation $H^{[n]}(A; M)$).
In our setting this corresponds to $\call^{H\!k}_X(\HA; H\!M)$
if $A$ is flat over $k$.

\subsection{Examples}
\begin{enumerate}
\item
A classical example of a $\THH$-calculation is the one of $\HZ$ and $H\F_p$
by Marcel B\"okstedt (\cite{Boe}, see \cite[Chapter 13]{loday} and the
references for published accounts of these results):
\begin{prop}
$$ \THH_*(H\F_p) \cong \F_p[\mu], \quad |\mu| = 2.$$
$$ \THH_i(\HZ) \cong \begin{cases}\Z,  & i=0, \\
\Z/j\Z, & i = 2j-1, \\
0, & \text{ otherwise. } \end{cases}$$
\end{prop}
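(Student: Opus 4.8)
\emph{Proof strategy.} The plan is to compute the mod $p$ homology of $\THH(H\F_p)$ and of $\THH(\HZ)$ for every prime $p$ by means of the B\"okstedt spectral sequence, and then to assemble the integral homotopy groups of $\THH(\HZ)$. First note that $\THH(\HZ)$ is rationally equivalent to $H\Q$ (rationalizing the cyclic bar construction gives $H\Q\wedge_{H\Q\wedge H\Q}H\Q\simeq H\Q$ since $\pi_*(\HZ\wedge\HZ)\otimes\Q=\Q$), while $\THH_n(\HZ)$ is a finitely generated abelian group; hence $\THH_{>0}(\HZ)$ is finite torsion and it suffices to pin down its $p$-primary parts together with the obvious fact that $\THH_0(R)=\pi_0(R)$.

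For $R=\HZ$ or $R=H\F_p$ the B\"okstedt spectral sequence is a strongly convergent multiplicative spectral sequence of $\cala_*$-comodule algebras
$$E^2_{s,t}=\HH_s((H\F_p)_*R)_t\ \Longrightarrow\ (H\F_p)_{s+t}(\THH(R)),$$
coming from the skeletal filtration of $\mathbb{S}^1\otimes R$ together with the K\"unneth isomorphism over $\F_p$. Its input is computed from the Hochschild homology of the dual Steenrod algebra and its relevant subalgebras via the standard formulas $\HH_*(\F_p[x])=\F_p[x]\otimes\Lambda(\sigma x)$ with $\sigma x$ in homological degree $1$ and internal degree $|x|$, and $\HH_*(\Lambda(y))=\Lambda(y)\otimes\Gamma(\sigma y)$ with $\Gamma$ the divided power algebra. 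Taking $R=H\F_p$, with $(H\F_p)_*H\F_p=\cala_*$, one obtains at the prime $2$ that $E^2=\cala_*\otimes\Lambda(\sigma\xi_i:i\ge1)$ with $\sigma\xi_i$ in total degree $2^i$, and at odd primes that $E^2=\cala_*\otimes\Lambda(\sigma\xi_i:i\ge1)\otimes\Gamma(\sigma\tau_j:j\ge0)$ with $\sigma\xi_i$ in total degree $2p^i-1$ and $\sigma\tau_j$ in total degree $2p^j$. Since $\THH(H\F_p)$ is an $H\F_p$-algebra, its mod $p$ homology is a free (extended) $\cala_*$-comodule, so the abutment is $\cala_*\otimes\THH_*(H\F_p)$ and only the comodule-primitive content matters.

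The heart of the argument is the behaviour of the differentials and extensions, which is essentially B\"okstedt's theorem. At $p=2$ a Poincar\'e series count gives $\prod_{i\ge1}(1+t^{2^i})=(1-t^2)^{-1}$, so $E^2$ already has the size of $\F_2[\mu]$ with $|\mu|=2$; the spectral sequence collapses, one identifies $\mu=\sigma\xi_1$, and there is a multiplicative extension making $\mu^{2^{i-1}}$ detected by $\sigma\xi_i$, which yields $\THH_*(H\F_2)\cong\F_2[\mu]$. At odd $p$ the $\cala_*$-comodule structure together with Milnor's coproduct forces a family of differentials of the shape $d(\gamma_p(\sigma\tau_j))\doteq\sigma\xi_{j+1}$ and their consequences, which annihilate the exterior factors and truncate the divided powers down to $\F_p[\sigma\tau_0]$, so again $\THH_*(H\F_p)\cong\F_p[\mu]$ with $\mu=\sigma\tau_0$ in degree $2$. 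This differential bookkeeping, and the verification that nothing survives beyond a single polynomial generator, is the main obstacle.

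For $R=\HZ$ I would run the same spectral sequence with input $(H\F_2)_*\HZ=\F_2[\xi_1^2,\xi_2,\xi_3,\dots]$ at $p=2$ and $(H\F_p)_*\HZ=\F_p[\xi_i:i\ge1]\otimes\Lambda(\tau_j:j\ge1)$ at odd $p$, or equivalently deduce $(H\F_p)_*\THH(\HZ)$ from the $H\F_p$-computation using the map $\HZ\to H\F_p$ and the cofiber sequence $\HZ\to\HZ\to H\F_p$ given by multiplication by $p$. Either way one gets the mod $p$ homology of $\THH(\HZ)$ in every degree; reading off the $p$-adic valuations shows that the $p$-torsion is concentrated in odd degrees and that in degree $2j-1$ its order is $p^{v_p(j)}$, with no torsion in positive even degrees. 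Assembling these local computations with the rational vanishing, and running a Bockstein argument to see that the $p$-torsion in degree $2j-1$ is cyclic of order $p^{v_p(j)}$ rather than a nontrivial extension, gives $\THH_0(\HZ)\cong\Z$, $\THH_{2j-1}(\HZ)\cong\Z/j\Z$ and $\THH_{2j}(\HZ)=0$ for $j\ge1$. The secondary obstacle, after the differentials, is precisely this passage from mod $p$ homology to integral homotopy: controlling the Bocksteins and patching the $p$-local answers into the stated formula.
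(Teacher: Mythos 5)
The paper does not actually prove this Proposition: it cites B\"okstedt's unpublished preprint and Chapter~13 of Loday for the statement, and then immediately names the B\"okstedt spectral sequence as the crucial ingredient, which is precisely the tool your proposal deploys. Your outline is the standard, correct route: your degree bookkeeping for $\sigma\xi_i$ and $\sigma\tau_j$, the identity $\prod_{i\ge1}(1+t^{2^i})=(1-t^2)^{-1}$ at $p=2$, the $d^{p-1}$-differentials $d(\gamma_p(\sigma\tau_j))\doteq\sigma\xi_{j+1}$ at odd $p$ followed by the multiplicative extensions $(\sigma\tau_j)^p\doteq\sigma\tau_{j+1}$, the analogous computation with $(H\F_p)_*\HZ$ as input, and the final assembly from rational vanishing plus a Bockstein argument all match the argument the paper is pointing to.
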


A crucial ingredient for these and for many other calculations of $\THH$ is 
B\"okstedt's spectral sequence: If $R$ is a commutative ring spectrum and if 
$E_*$ is a homotopy commutative ring spectrum such that $E_*(R)$ is flat over 
$E_*$ then there is a multiplicative spectral 
sequence 
$$ E^2_{p,q} = \HH^{E_*}_{p,q}(E_*(R)) \Rightarrow E_{p+q}\THH(R). $$ 
Here $\HH_{p,q}$ denotes Hochschild homology in homological degree $p$ and 
internal degree $q$ (\cite{Boe}, \cite[Theorem IV.1.9]{EKMM}). 

\item
If we apply $\THH$ to Eilenberg-Mac\,Lane spectra of number rings,
Lindenstrauss and Madsen show that $\THH$ detects arithmetic properties:
\begin{prop} \, \cite[Theorem 1.1]{lm} \, 
Let $K$ be a number field and let $\mathcal{O}_K$ be its ring of integers.
Then
$$ \THH_n(H\!\mathcal{O}_K) = \begin{cases}
\mathcal{O}_K, & n=0, \\
\mathcal{D}_{\mathcal{O}_K}^{-1}/\ell \mathcal{O}_K, & n = 2\ell-1, \\
0, & \text{otherwise.}
\end{cases} $$
\end{prop}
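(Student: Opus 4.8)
The plan is to reduce the computation of $\THH_n(H\mathcal{O}_K)$ to a
Hochschild homology computation in commutative algebra via B\"okstedt's
spectral sequence, using $H\Z$ as the coefficient ring spectrum, and then
to identify the $E_2$-term with classical Hochschild homology
$\HH^\Z_*(\mathcal{O}_K)$, which detects the different ideal. First I would
record the base change formula $\THH(H\mathcal{O}_K) \simeq H\mathcal{O}_K
\wedge_{H\Z} \THH^{H\Z}(H\mathcal{O}_K)$ coming from the fact that
$\mathcal{O}_K$ is flat over $\Z$ together with the identification of
$\THH^{H\Z}(H\mathcal{O}_K)$ with the topological avatar of Hochschild
homology over $\Z$; since $\mathcal{O}_K$ is a finitely generated projective
$\Z$-module, the Quillen equivalence between $H\Z$-algebra spectra and
differential graded $\Z$-algebras (Shipley's theorem, cited above) lets me
replace $\THH^{H\Z}(H\mathcal{O}_K)$ with the ordinary Hochschild complex
$\HH^\Z(\mathcal{O}_K)$, whose homology is the Hochschild homology of the
number ring.

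The second step is the algebraic input: for a number ring $\mathcal{O}_K$
one has $\HH^\Z_0(\mathcal{O}_K) = \mathcal{O}_K$, $\HH^\Z_1(\mathcal{O}_K) =
\Omega^1_{\mathcal{O}_K/\Z}$, and the latter module is the torsion module
$\mathcal{D}_{\mathcal{O}_K}^{-1}/\mathcal{O}_K$ annihilated by the
different $\mathcal{D}_{\mathcal{O}_K}$; higher $\HH^\Z_n$ vanish because
$\mathcal{O}_K$ is a (relative) complete intersection of relative dimension
zero away from the ramified primes, so that $\Omega^1$ is the only nonzero
K\"ahler differential and there is no room for higher Hochschild homology.
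This already shows the $E_2$-page of B\"okstedt's spectral sequence is
concentrated on the two rows $q=0,1$ in its internal grading, so the
spectral sequence must degenerate, and one is left only with resolving the
multiplicative extensions.

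The heart of the argument is then a divided-power / polynomial structure
computation exactly parallel to B\"okstedt's original computation of
$\THH_*(\HZ)$ recalled in the excerpt. Writing $x \in \THH_1$ for the class
detecting $\Omega^1_{\mathcal{O}_K/\Z}$, one needs to show that the powers
$x^j$ are nonzero and that they generate a copy of
$\mathcal{D}_{\mathcal{O}_K}^{-1}/\ell\mathcal{O}_K$ in degree $2\ell - 1$,
while the even-degree classes vanish. This is where the $\mathcal{O}_K$-module
structure enters: the multiplication-by-$\ell$ behaviour of the divided power
$\gamma_\ell(x)$, together with the known structure of $\THH_*(\HZ)$ as a base
case ($\mathcal{O}_K = \Z$, $\mathcal{D} = (1)$ recovers $\Z/j\Z$ in degree
$2j-1$), forces the stated answer by a comparison argument over the arithmetic
square, localizing at each prime $p$ and using that ramification at $p$ is
measured precisely by the $p$-part of the different.

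\medskip
\emph{Main obstacle.} The genuine difficulty is the last step: resolving the
multiplicative/divided-power extensions in B\"okstedt's spectral sequence so as
to pin down the exact torsion module $\mathcal{D}_{\mathcal{O}_K}^{-1}/\ell
\mathcal{O}_K$ in degree $2\ell - 1$ rather than merely an associated graded.
For $\mathcal{O}_K = \Z$ this is B\"okstedt's theorem and is already
nontrivial; the general case requires carrying the $\mathcal{O}_K$-linear and
$\mathrm{Gal}(K/\Q)$-equivariant refinement of that argument through, and
controlling how the different ideal interacts with the $p$-typical divided
powers prime by prime. I expect the proof of Lindenstrauss and Madsen to run
a careful localization-and-comparison scheme of exactly this kind, reducing to
the unramified case (where $\mathcal{O}_K$ is \'etale over $\Z$ locally and
$\THH$ base-changes to $\THH(\HZ)$ tensored up) and separately handling the
ramified primes via the known structure of $\Omega^1$.
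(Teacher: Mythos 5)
The two key claims on which your outline rests are both false, and they fail already in the simplest nontrivial examples.

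First, the ``base change formula'' $\THH(H\mathcal{O}_K) \simeq H\mathcal{O}_K \wedge_{H\Z} \THH^{H\Z}(H\mathcal{O}_K)$ is not a theorem. Taking $\mathcal{O}_K = \Z$, the right-hand side is $H\Z \wedge_{H\Z} \THH^{H\Z}(H\Z) \simeq H\Z$, whereas the left-hand side is $\THH(H\Z)$, which by B\"okstedt's theorem (quoted just above in the text) has $\Z/j\Z$ in degree $2j-1$. The relative term $\THH^{H\Z}(H\mathcal{O}_K)$ cannot, by itself, reproduce the absolute $\THH(H\Z)$-torsion; the correct relationships between $\THH(H\mathcal{O}_K)$, $\THH(H\Z)$, and $\THH^{H\Z}(H\mathcal{O}_K)$ are either the \'etale base-change equivalence $H\mathcal{O}_K \wedge_{H\Z} \THH(H\Z) \to \THH(H\mathcal{O}_K)$ (which is an equivalence only away from the ramified primes, precisely the interesting locus) or a genuine spectral sequence with differentials.

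Second, the assertion that $\HH^\Z_n(\mathcal{O}_K)$ vanishes for $n\geq 2$ ``because $\mathcal{O}_K$ is a complete intersection of relative dimension zero'' is wrong: HKR-type vanishing requires smoothness, and $\mathcal{O}_K$ is \emph{not} smooth over $\Z$ at ramified primes. For example, for $\mathcal{O}_K = \Z[i] = \Z[x]/(x^2+1)$ the standard two-periodic free resolution of $\Z[i]$ over $\Z[i]\otimes_\Z\Z[i]$ (alternating between $x-y$ and $x+y$) gives $\HH^\Z_{2\ell-1}(\Z[i]) \cong \Z[i]/(2i) \cong \mathcal{D}^{-1}/\Z[i]$ for \emph{every} $\ell\geq 1$, and $\HH^\Z_{2\ell}=0$ for $\ell\geq 1$. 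In general, the Hochschild homology of a ramified number ring is $2$-periodic, with $\mathcal{D}^{-1}/\mathcal{O}_K$ (independent of $\ell$) in all positive odd degrees. Since the theorem you are trying to prove gives $\mathcal{D}^{-1}/\ell\mathcal{O}_K$ in degree $2\ell-1$ --- a group whose order grows with $\ell$ --- topological Hochschild homology is strictly larger than Hochschild homology, and your outline offers no mechanism that could produce the $\ell$-dependence. This is exactly the hard part of the theorem and it cannot be waved away to ``multiplicative extensions'' in a degenerate spectral sequence.

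For comparison, Lindenstrauss and Madsen do not reduce to a single global $E_2$-collapse. They argue rationally (where $\THH(H\mathcal{O}_K)_\Q \simeq H\mathcal{O}_K\otimes\Q$) and then prime by prime: after $p$-completion, $\mathcal{O}_K\otimes\Z_p$ splits as a product of complete discrete valuation rings, each of which is monogenic over $W(\F_q)$ via an Eisenstein polynomial; the unramified layer is handled by \'etale base change from B\"okstedt's computation of $\THH(H\Z_p)$, and the totally ramified layer requires a separate, genuinely topological computation (which is where the $\ell$-dependent torsion in degree $2\ell-1$ arises and where the difference between $\THH$ and $\HH$ is visible). If you want to salvage the strategy, the right starting point is the map $H\mathcal{O}_K\wedge_{H\Z}\THH(H\Z)\to\THH(H\mathcal{O}_K)$, which is an equivalence away from ramification and whose cofibre is concentrated at the ramified primes; you then need the local computation to understand the cofibre, not a vanishing theorem for higher $\HH$.
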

Here, $\mathcal{D}_{\mathcal{O}_K}^{-1}$ is the inverse different. 
This is the set of all $x \in K$ such that the trace $tr(xy)$ is an 
integer for all $y \in \mathcal{O}_K$. 
The inverse different detects ramified primes and
their ramification type. 

We calculate
higher order $\THH$ of number rings with reduced coefficients in
\cite[Theorem 4.3]{dlr}. 
\item
For a suspension spectrum on a based (Moore) loop space,
$\Sigma^\infty_+\Omega_M X$,
the cyclic bar construction reduces to the suspension spectrum of the
cyclic bar construction on $\Omega_M X$ and  Goodwillie \cite[Proof of
Theorem V.1.1]{goodwillie} identifies the latter with the free loop
space on $X$, $LX$. Hence one obtains
$$ \THH(\Sigma^\infty_+\Omega_M X) \simeq \Sigma^\infty_+ LX.$$
\item
Let $R$ be a ring spectrum and let $\Pi$ be a pointed monoid.
Hesselholt and Madsen show that $\THH(R[\Pi])$ splits as
$$ \THH(R[\Pi]) \simeq \THH(R) \wedge |N^{cy}\Pi|$$
where $|N^{cy}\Pi|$ denotes the cyclic nerve of $\Pi$
\cite[Theorem 7.1]{hesselholt-m}.
\item
As a sample calculation for second order $\THH$ we get \cite[Theorem 2.1]{dlr2}:
\begin{equation} \label{eq:thh2plocals}
\THH^{[2]}_*(H\Z_{(p)}) \cong
\Z_{(p)}[x_1, x_2, \ldots]/(p^nx_n, x_n^p-px_{n+1}, n \geq 1)
\end{equation}
with $|x_1|=2p$.
\item
At an odd prime $\KU_{\!\!(p)}$ splits as
$$ \KU_{\!\!(p)} \simeq \bigvee_{i=0}^{p-2} \Sigma^{2i} L.$$
Here, $L$ is the Adams summand of $\KU_{\!\!(p)}$ 
with $\pi_*(L) \cong \Z_{(p)}[v_1^{\pm 1}]$ and $|v_1|=2p-2$.   
For consistency we set $L = \KU_{\!\!(2)}$ at the prime $2$.  
We denote by $\ku$, $\ell$ and $\ko$ the connective covers of
$\KU$, $L$ and $\KO$. 

McClure and Staffeldt determine the mod $p$-homotopy of $\THH(\ell)$ at
odd primes
\cite{mccs} and they show that $\THH(L)_p \simeq L_p \vee (\Sigma L_p)_\Q$
\cite[Corollary 7.2, Theorem 8.1]{mccs}.

Ausoni \cite{ausoni} determines the mod $p$ and mod $v_1$ homotopy of
$\THH(ku)$ as an input for his work on  $K(ku)$.

Angeltveit, Hill and Lawson show \cite[Theorem 2.6]{ahl} that for all primes, 
as an $\ell_*$-module 
$$ \THH_*(\ell) \cong \ell_* \oplus \Sigma^{2p-1}F \oplus T$$
where $F$ is a torsionfree summand and $T$ is an infinite direct sum of
torsion modules concentrated in even degrees. They describe $F$
explicitly using a rational calculation. Determining the torsion is way more
involved \cite[Theorem 2.8]{ahl}. The calculation of $\THH_*(\ell)$  
uses the method of \emph{duelling Bockstein spectral 
sequences} for the Bockstein spectral sequences associated to
$$ \xymatrix{
{\ell} \ar[d] \ar[r] & {\ell/p} \ar[d] \\
{\ell/v_1 = H\!\Z_{(p)}} \ar[r] & {H\!\F_p=\ell/(p,v_1).}
}$$

They describe the $2$-local homotopy groups of $\THH(ko)$ \cite[\S 7]{ahl} by 
first determining $\THH_*(ko; ku)$ and then using the Bockstein spectral 
sequence associated to the cofiber sequence $\Sigma ko \ra ko \ra ku$. 

Again, things are way easier for the periodic versions
\cite[Proposition 7.13]{ausoni}, \cite[Corollary 7.9]{ahl}:
$$ \THH(\KO) \simeq \KO \vee \Sigma \KO_\Q, \quad \THH(KU) \simeq KU \vee
\Sigma \KU_\Q. $$

\item
John Greenlees uses a generalization of the concept of Gorenstein maps of 
commutative rings to the spectral world in order to determine Gorenstein 
descent properties for cofiber sequences of connective commutative ring spectra 
\cite[Theorem 7.4]{greenlees-go}. 
\end{enumerate}

\subsection{Topological Hochschild homology of Thom spectra}
We start with a general statement about $X \otimes M^I(f)$ if $M^I(f)$ is a Thom
spectrum associated to an $E_\infty$-map to $BG_{hI}$ with $BG_{hI}$ 
as in \eqref{eq:bghi} with $R=S$, hence $G$ is a cofibrant replacement
of $GL_1^I(S)$. 
\begin{thm} \, \cite[Theorem 1.1]{schlichtkrull} \, 
For any pointed simplicial set $X$ and any map of grouplike $E_\infty$-spaces
$f\colon A \ra BG_{hI}$ there is an equivalence of $E_\infty$-ring spectra
$$ X \otimes M^I(f) \simeq  M^I(f) \wedge \Omega^\infty (a \wedge |X|)_+$$
where $a$ is the spectrum associated to $A$ with $\Omega^\infty a = A$.
\end{thm}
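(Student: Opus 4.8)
The plan is to commute $X\otimes(-)$ past the Thom spectrum construction and then straighten the twisted classifying map that results. Since the smash product is the coproduct in $\C_S$, the simplicial spectrum $X\otimes M^I(f)$ has $n$-simplices $M^I(f)^{\wedge X_n}$, so I first rewrite each smash power. By the multiplicativity of the Thom spectrum functor \cite[Proposition 3.10]{BSS} (recall $R=S$ here, so the Thom spectrum of a constant map to $BG_{hI}$ on a space $Z$ is $\Sigma^\infty_+ Z$) one has $M^I(f)^{\wedge X_n}\simeq M^I(f_n)$, where $f_n\colon A^{X_n}\to BG_{hI}$ sends a family $(x_s)_{s\in X_n}$ to the sum $\sum_{s\in X_n}f(x_s)$ formed with the $E_\infty$-group structures on $A$ and on $BG_{hI}$. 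Letting $[n]$ vary these assemble into a simplicial map, and since $M^I(-)$ preserves geometric realizations, passing to realizations gives $X\otimes M^I(f)\simeq M^I(h)$ with $h\colon X\otimes A\to BG_{hI}$ the realization of the $f_n$. Here $X\otimes A$ is the tensoring of the grouplike $E_\infty$-space $A$ over $X$; splitting off the tensor factor indexed by the basepoint simplex of $X$ yields a natural equivalence of grouplike $E_\infty$-spaces $X\otimes A\simeq A\times\Omega^\infty(a\wedge|X|)$ under which $h$ restricts on the first factor to $f$.

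The main step is to straighten $h$. Because $f$ is a homomorphism of grouplike $E_\infty$-spaces, the levelwise shear
$$ \sigma_n\colon A^{X_n}\longrightarrow A^{X_n}, \qquad (x_s)_{s\in X_n}\longmapsto\Bigl(\,x_{*}-\!\!\sum_{s\neq *}x_s\,,\ (x_s)_{s\neq *}\,\Bigr), $$
subtracting from the basepoint coordinate the sum of the remaining ones, is a unipotent self-equivalence of the \emph{simplicial} object $X\otimes A$ — it commutes with the structure maps since these are induced from the pointed simplicial set $X$ — and additivity of $f$ gives $f_n\circ\sigma_n=f\circ\mathrm{pr}_n$, where $\mathrm{pr}_n\colon A^{X_n}\to A$ is projection to the basepoint coordinate. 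Thus $M^I(h)\simeq M^I(f\circ\mathrm{pr})$ for $\mathrm{pr}\colon X\otimes A\simeq A\times\Omega^\infty(a\wedge|X|)\to A$. Finally, in the monoidal category of $I$-spaces over $BG_{hI}$ the composite $A\times C\xrightarrow{\mathrm{pr}}A\xrightarrow{f}BG_{hI}$ is the product of $f$ with the constant map on $C$, so its Thom spectrum is $M^I(f)\wedge\Sigma^\infty_+ C$, again by \cite[Proposition 3.10]{BSS}; taking $C=\Omega^\infty(a\wedge|X|)$ gives
$$ X\otimes M^I(f)\ \simeq\ M^I(f)\wedge\Sigma^\infty_+\Omega^\infty(a\wedge|X|). $$
All the equivalences are compatible with the multiplicative structures — the $f_n$, the shear, and the monoidality maps are maps of commutative $I$-space monoids resp. of $E_\infty$-ring spectra — so this is an equivalence of $E_\infty$-ring spectra.

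I expect the genuine work to sit in the straightening step, in three parts: (i) checking that $h$ really is the realization of the ``sum-of-$f$'' maps \emph{with the correct simplicial structure}, which needs naturality of the monoidality isomorphisms of $M^I(-)$ and their compatibility with the face and degeneracy operators of $X\otimes M^I(f)$; (ii) verifying that $\sigma$ is a self-equivalence of the simplicial object $X\otimes A$ and conjugates $h$ to $f\circ\mathrm{pr}$ \emph{coherently}, not merely up to homotopy, which is where one really uses that $A$ is grouplike and $f$ strict enough after the cofibrant replacement $G\to GL_1^I(S)$; and (iii) identifying $X\otimes A$ with $A\times\Omega^\infty(a\wedge|X|)$ compatibly with all of the above. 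One must also carry along the usual model-categorical bookkeeping (positive flat structures, cofibrant replacements, semistability) behind each ``$\simeq$''. As a check, for $X=\mathbb{S}^1$ one recovers the free-loop-space splitting $\THH(M^I(f))\simeq M^I(f)\wedge\Sigma^\infty_+\Omega^\infty(a\wedge|\mathbb{S}^1|)$; and an essentially equivalent alternative is to apply $X\otimes(-)$ directly to the bar-construction presentation \eqref{eq:thomI} of $M^I(f)$, using that $X\otimes(-)$ preserves two-sided bar constructions and that $\Sigma^\infty_+$ commutes with tensoring over pointed simplicial sets.
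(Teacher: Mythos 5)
Your overall strategy — use the multiplicativity of the $I$-space Thom spectrum functor to write $X\otimes M^I(f)$ as the realization of $[n]\mapsto M^I(f_n)$ with $f_n\colon A^{X_n}\to BG_{hI}$ the ``sum-of-$f$'' map, then untwist the resulting classifying map on $X\otimes A$ using the grouplike structure, and finally read off the smash factor from the $I$-space monoidality — is the right shape of argument and matches the route Schlichtkrull takes. But there is a genuine error in the untwisting step, at exactly the point you yourself flagged in item (ii).

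The levelwise shear
$$\sigma_n\colon A^{X_n}\to A^{X_n}, \qquad (x_s)_s\mapsto\Bigl(x_* - \sum_{s\neq *}x_s,\ (x_s)_{s\neq *}\Bigr)$$
is \emph{not} a map of simplicial objects, and your justification (``it commutes with the structure maps since these are induced from the pointed simplicial set $X$'') is the false step. The face maps of $X\otimes A$ are induced from $\phi\colon X_n\to X_{n-1}$ by \emph{summing over fibers}, $\phi_*(x)_t = \sum_{\phi(s)=t}x_s$, and when $\phi$ collapses non-basepoint simplices onto the basepoint the formula mixes the $*$-coordinate with the others, precisely what $\sigma$ is sensitive to. Concretely, take $X=\mathbb{S}^1$ with $X_1=\{*,1\}$, $X_0=\{*\}$, $d_0(*)=d_0(1)=*$: then $d_0\sigma_1(x_*,x_1)=(x_*-x_1)+x_1=x_*$, but $\sigma_0 d_0(x_*,x_1)=x_*+x_1$, so $\sigma$ fails to commute with $d_0$. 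For the same reason the projection $\mathrm{pr}_n\colon A^{X_n}\to A$ to the basepoint coordinate is not simplicial either, so the identity $f_n\circ\sigma_n=f\circ\mathrm{pr}_n$ that you prove does not realize.

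The fix keeps your idea but replaces the non-simplicial projection by the \emph{total-sum map} $\mu\colon X\otimes A\to A$, $\mu_n(x)=\sum_{s\in X_n}x_s$, which \emph{is} simplicial (to the constant simplicial object $A$), and its simplicial section $s\colon A\to X\otimes A$ inserting at the basepoint coordinate. Additivity of $f$ gives $h_n=f\circ\mu_n$ on the nose, i.e.\ $h=f\circ\mu$. Because $A$ is grouplike, the map $\tau=(\mu,\ \mathrm{id}-s\mu)\colon X\otimes A\to A\times(X\otimes A)$ is a simplicial levelwise equivalence onto $A\times\ker(\mu)$, and under $\tau$ the classifying map $h$ becomes $f\circ\mathrm{pr}_1$. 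Your shear $\sigma$ is essentially the second component of $\tau$ with the basepoint coordinate shifted back by $x_*$, which is what destroys simpliciality; dropping that shift fixes it. After realization, $\ker(\mu)$ is the fiber of $|X|\otimes A\to A$, which for grouplike $A=\Omega^\infty a$ is $\Omega^\infty(a\wedge|X|)$, and then your last step (monoidality applied to $f\circ\mathrm{pr}_1$ on $A\times\Omega^\infty(a\wedge|X|)$) goes through unchanged to give $M^I(f)\wedge\Sigma^\infty_+\Omega^\infty(a\wedge|X|)$. Your items (i) and (iii), and the model-categorical bookkeeping, remain as you list them.
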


This result generalizes \cite{bcs}, where the case of $X=\mathbb{S}^1$ is
covered. In general, for $X=\mathbb{S}^n$ the above result determines the
higher order topological Hochschild homology of $M^I(f)$
\cite[(1.6)]{schlichtkrull} as 
$$ \THH^{[n]}(M^I(f)) \simeq M^I(f) \wedge B^nA_+.$$

As an example consider the canonical map $f\colon BU \ra BG_{hI}$. Then one
obtains
$$ X \otimes MU \simeq MU \wedge \Omega^\infty (bu \wedge |X|)$$
and
$$ \THH^{[n]}(MU) \simeq MU \wedge \Omega^\infty (bu \wedge \mathbb{S}^n)
\simeq MU \wedge B^nBU_+.$$

There is also a statement about $\THH$ of Thom spectra associated to
single loop maps in \cite[Theorem 1]{bcs}.
We state the relative version of this, so $G$ is a cofibrant
replacement of $GL_1^I(R)$. 
\begin{thm} \, \cite[Theorem 6.6]{BSS} \, 
Assume that $R$ is a commutative symmetric ring spectrum that is semistable
and $S$-cofibrant.
Let $M^I(f)$ be a Thom spectrum associated to a map $f\colon M \ra BG_{hI}$
of topological monoids, where $M$ is grouplike and well-pointed. Then
$$ \THH^R(M^I(f)) \simeq M^I(L^\eta(B(f))).$$
\end{thm}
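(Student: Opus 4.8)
The plan is to adapt the proof of the absolute statement \cite[Theorem~1]{bcs} to the relative, diagram-spectrum setting; the point is that the three functors in play --- the Thom spectrum functor $M^I(-)$, the relative topological Hochschild homology $\THH^R(-)$, which for the associative $R$-algebra $M^I(f)$ means the cyclic bar construction $B^{\mathrm{cy}}_R(M^I(f))$, and the cyclic bar construction $B^{\mathrm{cy}}$ of topological monoids --- all interact compatibly. Throughout, let $N = L^\eta(M)$ be the associative monoid in $\T^I$ over $BG$ obtained by applying the $I$-spacification of \cite[\S4]{BSS} to the grouplike well-pointed topological monoid $M$, so that $M^I(f) = M^I(N\to BG)$ is an associative $R$-algebra spectrum by \cite[Proposition~3.10, Corollary~3.11]{BSS}. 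Because $R$ is semistable and $S$-cofibrant and because we work in the positive flat model structure on $C(\T^I)$, this $R$-algebra and the bar constructions below are homotopically meaningful \cite[Proposition~3.8]{BSS}.

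First I would reduce $\THH^R$ to the level of $I$-spaces. By definition $\THH^R(M^I(f))$ is the realization of $[q]\mapsto M^I(f)^{\wedge_R(q+1)}$. The key input is that $M^I(-)$, although only \emph{lax} symmetric monoidal in general, becomes \emph{strong} symmetric monoidal on $I$-spaces over $BG$: for $X,Y$ over $BG$ the canonical map $M^I(X)\wedge_R M^I(Y)\to M^I(X\boxtimes Y)$ --- where $X\boxtimes Y$ is an $I$-space over $BG$ via the multiplication $BG\boxtimes BG\to BG$ of the commutative $I$-space monoid $BG$ --- is a stable equivalence. This follows from the explicit two-sided bar description $M^I(g) = B_\boxtimes(\mathbb{S}^I(UX),\mathbb{S}^IG,R)$ of \eqref{eq:thomI}, since $\mathbb{S}^I$ is strong symmetric monoidal, $U$ carries $\boxtimes$ over $BG$ to $\boxtimes$ over $EG$, and $B_\boxtimes$ commutes with the relevant smash products; it is the $I$-space avatar of the classical multiplicativity of the Thom spectrum construction. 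Iterating it gives $M^I(f)^{\wedge_R(q+1)}\simeq M^I(N^{\boxtimes(q+1)})$, and --- again from the bar formula --- $M^I(-)$ preserves realizations of the good simplicial objects occurring here. Because $BG$ is \emph{commutative}, the twisting maps to $BG$ are compatible with the cyclic last face operator that permutes smash factors, so $[q]\mapsto M^I(f)^{\wedge_R(q+1)}$ is $M^I(-)$ applied, compatibly with all structure maps, to the cyclic bar construction $B^{\mathrm{cy}}_\boxtimes(N)$ of $N$, regarded as an $I$-space over $BG$ via $f$. Hence
\[ \THH^R(M^I(f)) \;\simeq\; M^I\bigl(B^{\mathrm{cy}}_\boxtimes(N)\bigr). \]

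Next I would identify $B^{\mathrm{cy}}_\boxtimes(N)$ as an $I$-space over $BG$. Since the $I$-spacification $L^\eta$ is symmetric monoidal, commutes with realizations, and is homotopically meaningful \cite[\S4]{BSS}, it commutes with cyclic bar constructions, so $B^{\mathrm{cy}}_\boxtimes(N) = B^{\mathrm{cy}}_\boxtimes(L^\eta M)\simeq L^\eta\bigl(B^{\mathrm{cy}}(M)\bigr)$ over $BG$, the structure maps being induced throughout by $f$. Finally, as $M$ is grouplike and well-pointed, the classical identification of the cyclic bar construction of such a monoid with the free loop space applies: $B^{\mathrm{cy}}(M)\simeq LBM$ (compare the discussion of $\THH(\Sigma^\infty_+\Omega_M X)$ in Section~\ref{sec:thhetc}), and under this equivalence the map $B^{\mathrm{cy}}(M)\to BG_{hI}$ induced by $f$ --- which uses the shearing equivalence $B^{\mathrm{cy}}(BG_{hI})\simeq BG_{hI}\times B(BG_{hI})$ for the commutative monoid $BG_{hI}$ followed by the projection --- is precisely the free loop extension $B(f)$ of $f$ appearing in the statement. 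Combining, $B^{\mathrm{cy}}_\boxtimes(N)\simeq L^\eta(B(f))$ over $BG$, and therefore $\THH^R(M^I(f))\simeq M^I(L^\eta(B(f)))$.

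I expect the first step to be the main obstacle: establishing strong monoidality of $M^I(-)$ on $I$-spaces over $BG$ together with its compatibility with realization, and --- the delicate point --- checking that the twisting maps behave correctly with respect to the cyclic last face operator, which is exactly where commutativity of $BG$ (not of $M^I(f)$) enters, all while keeping every object sufficiently cofibrant (the positive flat structure on $C(\T^I)$, the $S$-cofibrancy of $R$) so that the point-set bar constructions compute the derived invariants. By comparison, the second step is bookkeeping plus the well-known free loop space identification, for which the hypotheses that $M$ be grouplike and well-pointed are exactly what is needed.
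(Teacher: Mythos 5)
The paper you are reading is a survey; it does not prove this theorem, it states it with a citation to BSS and then unpacks the meaning of the notation $L^\eta(B(f))$ via the diagram and the discussion of the Hopf map $\eta\colon BBG_{hI}\ra BG_{hI}$. So there is no ``paper's proof'' per se, but comparing your sketch against what the paper tells you about the construction already reveals a serious gap.

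Your architecture is plausible and matches the shape of the argument in \cite{bcs} and \cite{BSS}: reduce $\THH^R(M^I(f))$ to $M^I$ of the cyclic bar construction using (the appropriate derived form of) strong monoidality of $M^I(-)$ over $BG$, then identify $B^{\mathrm{cy}}(M)\simeq LBM$ using that $M$ is grouplike and well-pointed. What is missing is the step that is actually the entire point of the theorem: the identification of the resulting map $B^{\mathrm{cy}}(M)\ra BG_{hI}$ with the $\eta$-twisted map $L^\eta(B(f))$. You describe this map as ``shearing followed by the projection'' and assert that it ``is precisely the free loop extension $B(f)$,'' but that description produces the untwisted map $LBM \to LBBG_{hI} \simeq BG_{hI}\times BBG_{hI} \to BG_{hI}$, i.e.\ projection, not the composite
\[
LBM \xrightarrow{L(B(f))} LBBG_{hI} \simeq BG_{hI}\times BBG_{hI} \xrightarrow{\id\times\eta} BG_{hI}\times BG_{hI} \xrightarrow{\mu} BG_{hI}
\]
that the paper carefully spells out. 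The Hopf map appears because there are \emph{two} equivalences $B^{\mathrm{cy}}(A)\simeq A\times BA$ for a grouplike commutative $A$: the simplicial shearing one (under which the multiplication $B^{\mathrm{cy}}(A)\ra A$ is the projection), and the one coming from the group-completion equivalence $B^{\mathrm{cy}}(A)\simeq LBA$ followed by the $H$-space trivialization of the loop fibration. These do not agree, and their discrepancy is measured by $\eta$. Your proof identifies $B^{\mathrm{cy}}(M)$ with $LBM$ via the second route but computes the map to $BG_{hI}$ via the first, then silently drops $\eta$ into the conclusion. That incompatibility must be confronted; it is where all the hard work in \cite{bcs} and in the proof of \cite[Theorem 6.6]{BSS} lives.

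Two smaller points: first, you use $L^\eta$ both for the $I$-spacification functor (``let $N = L^\eta(M)$ \ldots obtained by applying the $I$-spacification'') and for the $\eta$-twisted free-loops construction in the final display; in the paper and in \cite{BSS} these are different things, and the clash of notation makes it hard to tell whether you noticed the twist at all. Second, \cite[Proposition 3.10, Corollary 3.11]{BSS} give \emph{lax} symmetric monoidality of $M^I(-)$, and the upgrade to an equivalence on suitable $I$-spaces over $BG$ (the ``Thom isomorphism'' ingredient you invoke) is something that has to be proved with care in the positive flat model structure, not merely asserted. Your outline of that verification is reasonable but should be flagged as a substantive lemma rather than folded into a sentence.
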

Here, $M^I(L^\eta(B(f)))$ is the Thom spectrum associated to the map
$$ \xymatrix{
{L(B(M))} \ar[rr]^(0.4){L(B(f))} \ar@{.>}[d]_{L^\eta(B(f))}  & &
{LBBG_{hI} \simeq  BG_{hI} \times BBG_{hI}} \ar[d]^{\id \times \eta} \\
{BG_{hI}} & & { BG_{hI} \times BG_{hI} } \ar[ll]_\mu
}$$
Note that $BBG_{hI}$ is an $H$-group, so we can split the free loop
space $LBBG_{hI}$ into the base space and the based loops
$$ LBBG_{hI} \simeq BBG_{hI} \times \Omega BBG_{hI}$$
and the second factor is equivalent to $BG_{hI}$. As usual, $\eta$
denotes the Hopf map $\eta\colon \mathbb{S}^3 \ra \mathbb{S}^2$ and it
induces a map $\eta\colon BBG_{hI} \ra BG_{hI}$ as above via
$$ BBG_{hI} \simeq \Omega^2B^4G_{hI} \ra \Omega^3B^4G_{hI} \simeq BG_{hI}$$
by reducing the loop coordinates by precomposition.

For quotient spectra, this results gives a new way of calculating
$\THH^R(R/I)$. For related results see \cite{angeltveit} and in the case
where $R/I$ happens to be commutative see \cite[\S 7]{dlr2}.

A second example comes from viewing $H\Z_{(p)}$ as a Thom spectrum
associated as a $2$-fold loop map $\Omega^2(\mathbb{S}^3\langle 3\rangle)
\ra BG_{hI}$ which allows for a determination of $\THH(H\Z_{(p)})$
as $H\Z_{(p)}\wedge \Omega(\mathbb{S}^3\langle 3\rangle)_+$
\cite[Theorem 3.8]{bcs} and an additive equivalence
$$\THH^{[2]}(H\Z_{(p)})
\simeq H\Z_{(p)}\wedge \mathbb{S}^3\langle 3\rangle_+.$$
This gives a geometric interpretation of \eqref{eq:thh2plocals}, but
without an identification of the multiplicative structure. 
See also \cite[\S 4]{klang}, where Klang presents related results, using
the framework of factorization homology.

\subsection{Topological Hochschild cohomology as a derived center}
In the discrete case, \ie, for a commutative ring $k$ and a $k$-algebra $A$
one can describe the center of $A$,
$$Z(A) = \{b \in A, ab =ba \text{ for all } a \in A\}$$
as the set of $A$-bimodule maps from $A$ to $A$. If $f$ is such a map,
$f \colon A \ra A$ with $f(cad) = cf(a)d$ for all $a,c,d \in A$, then $f$ is
determined by $f(1) =: b$ and this $b$ satisfies
$$ ab = af(1) = f(a\cdot 1) = f(a) = f(1\cdot a) = f(1)a = ba$$
so the set of such morphisms gives rise to an element in the center and vice
versa, for any $b \in Z(A)$ we get such an $f$ by setting $f(1) =b$.

Hochschild cohomology of $A$ over $k$ can be described as
$$ \HH^*_k(A) = \Ext^*_{A \otimes_k A^{o}}(A, A)$$
if $A$ is $k$-projective. Hence $\HH^0_k(A) = Z(A)$ and the Hochschild
cohomology of $A$ is the \emph{derived center of $A$}. Hochschild cohomology
has a graded commutative algebra structure via a cup product,
 but the solved Deligne conjecture \cite{mcc-smith} says that the Hochschild
cochain complex is in general not a differential graded commutative
algebra, but that it has an $E_2$-algebra structure.

For ring spectra there is no homotopically meaningful definition of a center:
requiring equality translates to an equalizer diagram and this wouldn't be
homotopy invariant. For a commutative ring spectrum $R$ and an $R$-algebra
spectrum $A$ this equalizer corresponds precisely to taking not
just $R$-module endomorphisms but $A$-bimodule endomorphisms.
So a homotopy invariant version is as follows.
\begin{defn} \label{def:derivedcenter}
For a commutative ring spectrum $R$ and an $R$-algebra spectrum $A$, the
\emph{topological Hochschild cohomology groups of $A$ over $R$} are
$$\THH_R^*(A) = \pi_*\Ext_{A \wedge_R A^{o}}(A, A)$$
and the \emph{derived center of $A$ over $R$} is
$$ \THH_R(A) = \Ext_{A \wedge_R A^{o}}(A, A).$$
Here, $\Ext_{A \wedge_R A^{o}}(A, A)$ denotes the derived endomorphism
spectrum of $A$ as an $A$-bimodule \cite[IV \S 1]{EKMM}.
\end{defn}
McClure and Smith's proof of the Deligne conjecture also provides a  spectrum
version for topological Hochschild cohomology, giving the derived
center an $E_2$-structure:  
\begin{thm} \, \cite{mcc-smith} \, 
If $A$ is an associative $R$-algebra spectrum, then $\THH_R(A)$ is an
$E_2$-ring spectrum.
\end{thm}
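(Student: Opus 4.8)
The plan is to reduce the claim to the original (chain-level) proof of the Deligne conjecture by McClure and Smith, transported to spectra. The key observation is that $\THH_R(A) = \Ext_{A \wedge_R A^{o}}(A,A)$, the derived endomorphism spectrum of $A$ as an $A$-bimodule, is computed by a cosimplicial object: one resolves $A$ as an $A \wedge_R A^{o}$-module by the standard bar resolution, whose $n$-th term is $A \wedge_R A^{\wedge_R n} \wedge_R A$, and then applies $F_{A \wedge_R A^o}(-,A)$, obtaining the cosimplicial spectrum $[n] \mapsto F_R(A^{\wedge_R n}, A)$. Its totalization is a model for $\THH_R(A)$. First I would fix a convenient point-set model for this cosimplicial spectrum (in the category of EKMM $R$-modules, say), making sure the face and degeneracy maps — insertion of the unit $R \ra A$, multiplication in $A$, and evaluation against the $A$-bimodule structure of the target — are strictly functorial, not merely homotopy-coherent. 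This is the "cosimplicial cochain spectrum" $C^\bullet_R(A;A)$ whose associated conormalized object is the topological Hochschild cochain spectrum.

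Next I would equip $C^\bullet_R(A;A)$ with the structure of an algebra over the (spectral) surjection operad, or equivalently over a cofibrant $E_2$-operad. McClure and Smith's insight is precisely that the cochain complex of an operad with multiplication — here the endomorphism operad $\underline{\mathrm{End}}(A) = \{F_R(A^{\wedge_R n}, A)\}_n$ in $R$-modules, which is an operad because $A$ is associative, together with the multiplication element induced by $\mu_A \in F_R(A^{\wedge_R 2}, A)$ and unit $\eta_A \in F_R(A^{\wedge_R 0}, A) = A$ — carries a natural action of the surjection operad $\mathcal{X}$, and $\mathcal{X}$ is an $E_2$-operad (its realization is equivalent to the little $2$-cubes operad $E_2$ in the sense of the excerpt). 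The combinatorics of the surjection operad act on $C^\bullet_R(A;A)$ exactly as in the discrete case: a surjection $s \colon \{1,\dots,r\} \to \{1,\dots,n\}$ specifies how to interleave $n$ cochains using $r$ "inputs" from $A$, and all the requisite maps are built solely from $\mu_A$, $\eta_A$, and the symmetric monoidal structure of $R$-modules, so they make sense verbatim for ring spectra. I would then invoke Tot: totalization is lax monoidal with respect to the relevant products on cosimplicial objects, so the $\mathcal{X}$-action on $C^\bullet_R(A;A)$ passes to an $\mathcal{X}$-action, hence an $E_2$-structure, on $\mathrm{Tot}\, C^\bullet_R(A;A) \simeq \THH_R(A)$.

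I expect the main obstacle to be the homotopical bookkeeping around $\mathrm{Tot}$: one must know that $\mathrm{Tot}$ of the chosen cosimplicial spectrum genuinely computes the derived mapping spectrum $\Ext_{A\wedge_R A^o}(A,A)$ (this requires either a Reedy-fibrancy argument or a cofibrancy hypothesis on $A$ over $R$, mirroring the standing assumption that our ring spectra are cofibrant), and that the operad action is compatible with the cofibrant/fibrant replacements used to make $\mathrm{Tot}$ homotopically meaningful. In the discrete setting all of this is automatic because one works with chain complexes and naive totalization; in spectra one must replace $C^\bullet_R(A;A)$ by a Reedy-fibrant model in a way that preserves the $\mathcal{X}$-action, which is where a certain amount of care — but no new idea beyond McClure--Smith plus standard cosimplicial homotopy theory — is needed. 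An alternative, which I would mention as a remark, is to bypass the point-set totalization entirely and work $\infty$-categorically: $\THH_R(A)$ is the endomorphism object of $A$ in the monoidal $\infty$-category of $A$-bimodules, and the $E_2$-structure is then an instance of the Dunn--Lurie additivity theorem identifying $E_2$-algebras with associative algebras in associative algebras, applied to the fact that $A$-bimodule endomorphisms of $A$ form an $E_1$-algebra in an $E_1$-monoidal setting. Either route yields the theorem; the first has the virtue of literally reproducing the cited proof.
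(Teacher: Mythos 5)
The paper does not give a proof of this theorem; it merely cites McClure--Smith \cite{mcc-smith}. Your proposal is an accurate reconstruction of their approach: model $\THH_R(A)$ as the totalization of the cosimplicial spectrum $[n]\mapsto F_R(A^{\wedge_R n},A)$ arising from the endomorphism operad $\{F_R(A^{\wedge_R n},A)\}_n$ with multiplication given by $\mu_A$ and unit $\eta_A$, invoke the surjection-operad (equivalently, $E_2$) action that McClure--Smith construct on the associated cosimplicial object, and transfer it through totalization. You correctly identify the genuine technical burden — ensuring the point-set totalization computes the derived endomorphism spectrum and that Reedy-fibrant replacement can be done compatibly with the operad action — which is exactly where the real work lies in adapting the chain-level argument to $R$-module spectra. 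Your remark that the Dunn--Lurie additivity theorem gives an alternative, $\infty$-categorical route to the same $E_2$-structure is also correct and is in fact the argument favored in more recent treatments. Since the survey offers no proof to compare against, the only substantive comment is that your reconstruction matches the cited source's method and correctly flags where care is needed.
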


An important example of a calculation of such a derived center is
Angeltveit's calculation of $\THH_{E_n}(K_n)$. Here $E_n$ denotes Morava
$E$-theory with
$$ \pi_*(E_n) \cong W(\F_q)[[u_1,\ldots,u_{n-1}]][u^{\pm 1}]$$
where the $u_i$ are deformation parameters for the height $n$ Honda formal
group law with $|u_i|=0$ and $u$ is a periodicity element with $|u|=2$. The
sequence of elements $(p,u_1,\ldots,u_{n-1})$ is a regular sequence and
$K_n$ is the $2$-periodic version of Morava $K$-theory
$$ K_n = E_n/(p,u_1,\ldots,u_{n-1}), \quad (K_n)_* = \F_q[u^{\pm 1}]. $$

Angeltveit shows that the derived center of $K_n$ over $E_n$ depends on
the chosen $A_\infty$-algebra structure of $K_n$ over $E_n$:
\begin{thm} \, \cite[Theorems 5.21, 5.22]{angeltveit} \,
\begin{enumerate}
\item
For any prime $p$ and any $n \geq 1$ there is an  $A_\infty$-structure on
$K_n$ such that $\THH_{E_n}(K_n) \simeq E_n$. 
\item
For $n=1$ and any $d$ with $1 \leq d < p-1$ and any $a$ with $1 \leq a \leq 
p-1$ there is an $A_\infty$-structure on $K_1$ with 
$$ \THH^*_{E_1}(K_1) \cong \pi_*(E_1)[[q]]/(p+a(uq)^d).$$ 
\end{enumerate}
\end{thm}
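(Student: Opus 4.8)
The plan is to compute the derived endomorphism spectrum $\Ext_{K_n \wedge_{E_n} K_n^o}(K_n, K_n)$ by producing an explicit small resolution of $K_n$ as a $K_n$-bimodule (over $E_n$) and then reading off the homotopy groups, keeping careful track of how the choice of $A_\infty$-structure enters. The starting point is that $K_n = E_n/(p,u_1,\ldots,u_{n-1})$ is obtained from the commutative ring spectrum $E_n$ by killing a regular sequence, so as an $E_n$-module $K_n$ is built from Koszul-type cofiber sequences and one has $\pi_*(K_n \wedge_{E_n} K_n) \cong \Lambda_{(K_n)_*}(\alpha_0,\ldots,\alpha_{n-1})$, an exterior algebra on generators $\alpha_i$ of degree $1$ (dual to the regular sequence elements). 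Thus $K_n \wedge_{E_n} K_n^o$ has homotopy an exterior algebra over $(K_n)_* = \F_q[u^{\pm 1}]$, and $K_n$ is the quotient of this bimodule algebra by the augmentation ideal. For part (a), the symmetric choice of $A_\infty$-structure makes the relevant $A_\infty$ Massey-type products vanish, so a minimal resolution of $(K_n)_*$ over $\Lambda_{(K_n)_*}(\alpha_0,\ldots,\alpha_{n-1})$ lifts to a resolution of $K_n$ by $K_n$-bimodules, and the resulting $\Ext$-spectrum has homotopy the divided power / polynomial dual, which one identifies with $\pi_*(E_n)$ after accounting for the internal gradings; the $E_n$-algebra structure map then exhibits the equivalence $\THH_{E_n}(K_n) \simeq E_n$.

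For part (b) with $n=1$, the object is $K_1 = E_1/p = KU_p/p$, and $K_1 \wedge_{E_1} K_1$ has homotopy $\Lambda_{(K_1)_*}(\alpha)$ with $|\alpha|=1$. Here $K_1$ as an $E_1$-algebra spectrum is a form of $E_1/p$ and its $A_\infty$-structures are classified (this is essentially an obstruction-theory computation over $E_1$, cf. the discussion of obstruction theory in the introduction) by a parameter: the first nontrivial $A_\infty$ Massey product $m_{p}$ or a ``$k$-invariant'' living in a group isomorphic to $(K_1)_*$, and different values $a$ (with $1 \le a \le p-1$) together with a ``depth'' $d$ (with $1 \le d < p-1$) give genuinely different structures. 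The idea is to run the bar-spectral-sequence / obstruction spectral sequence computing $\pi_* \Ext_{K_1 \wedge_{E_1} K_1^o}(K_1,K_1)$: its $E_2$-term is $\Ext$ over the exterior algebra $\Lambda(\alpha)$, which is a divided power algebra $\Gamma_{(K_1)_*}[q]$ on a class $q$ of cohomological degree $1$; the claim is that the only differential or multiplicative extension is the one forced by the $A_\infty$-structure, producing the relation $p + a(uq)^d = 0$. So one identifies $\THH^*_{E_1}(K_1) \cong \pi_*(E_1)[[q]]/(p + a(uq)^d)$, with the power series completion arising because $q$ has positive filtration and the spectral sequence converges conditionally.

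The main obstacle is the bookkeeping of part (b): one must (i) pin down the classification of $A_\infty$-structures on $K_1$ over $E_1$ precisely enough to extract the invariants $a$ and $d$, which requires identifying the relevant obstruction groups $\Ext^{s}_{E_1 \wedge E_1}$ and their action on the associativity obstructions, and (ii) show that the obstruction/bar spectral sequence for the derived endomorphism spectrum degenerates except for the single expected contribution, i.e.\ that no unexpected differentials or hidden extensions occur. Step (ii) is where the homotopy-commutative (rather than strictly commutative) nature of $K_1$ bites: since $\pi_0(E_1)$ does not have characteristic $2$ issues here but the exterior generator $\alpha$ interacts with the $A_\infty$-structure via a Toda-bracket-type formula, one has to verify that the transgression of $q^{pd}$ hits exactly $p + a(uq)^d$ and nothing else. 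I expect this to follow from a direct manipulation of the $A_\infty$ structure maps on the two-sided bar construction $B(K_1, K_1\wedge_{E_1}K_1, K_1)$, but it is the computational heart of the argument and the place where the dependence on the chosen structure must be made visible.
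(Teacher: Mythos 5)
This theorem is a cited result in a survey; the paper does not prove it but refers to Angeltveit's Theorems 5.21 and 5.22, so there is no in-paper proof to compare against. Your overall scaffolding (Koszul duality over $\Lambda_{(K_n)_*}(\alpha_0,\ldots,\alpha_{n-1})$, a bar/obstruction spectral sequence, and dependence on the choice of $A_\infty$-structure) does match the general methodology of Angeltveit's paper, but there is one substantive error in part~(a).

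You claim that the \emph{symmetric} choice of $A_\infty$-structure, i.e.\ the one making the relevant Massey products vanish, is what yields $\THH_{E_n}(K_n) \simeq E_n$, and that the polynomial/divided-power $\Ext$-algebra identifies with $\pi_*(E_n)$ ``after accounting for the internal gradings.'' This is backwards, and the regrading phrase hides the real content. The Koszul dual of $\Lambda_{(K_n)_*}(\alpha_0,\ldots,\alpha_{n-1})$ is a (completed) polynomial algebra $(K_n)_*[[q_0,\ldots,q_{n-1}]] = \F_q[u^{\pm1}][[q_0,\ldots,q_{n-1}]]$, which is \emph{not} isomorphic to $\pi_*(E_n) = W(\F_q)[[u_1,\ldots,u_{n-1}]][u^{\pm1}]$ by any regrading: the former has $p=0$ and the latter is torsion-free. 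The passage from one to the other requires nontrivial multiplicative extensions in the spectral sequence, with $q_0$ detecting $p$ and $q_i$ detecting $u_i$ in higher filtration, and these extensions are created by the twisting of the $K_n$-bimodule structure. If the $A_\infty$-structure were ``symmetric'' and all the relevant Massey products vanished, there would be no twisting, no hidden extensions, and you would land on the power series answer, not $E_n$. Indeed the paper's remark immediately after the theorem states the opposite of your claim: the structure realizing $\THH_{E_n}(K_n)\simeq E_n$ is the \emph{least commutative} one in Angeltveit's classification (his Theorem~5.8), the one with maximal rather than minimal twisting. Your part~(b) sketch is closer in spirit, since there you do let the $A_\infty$-parameters create the relation $p+a(uq)^d=0$, but you should notice that this is the same mechanism that must already be at work in part~(a) (for $n=1$, the case $d=1$ of (b) recovers (a)). The computational heart, which you correctly flag as the hard step, is precisely showing that the hidden extensions hit exactly the prescribed relation and nothing else; but the framing for (a) needs to be inverted before that bookkeeping can even start.
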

Here, the first structure in (a) occurs as the one coming from the
\emph{least commutative $A_\infty$ structure} on $K_n$
(see \cite[Theorem 5.8]{angeltveit} for a precise statement). The case
$n=1, p=2$ of (a) is due to Baker and Lazarev \cite[Proof of Theorem 3.1]{bl} 
who show that at the prime $2$
$$ \THH_{KU_2}(K(1)) \simeq KU_2.$$

\subsection{Topological Andr\'e-Quillen homology}
We will first sketch the definition of ordinary Andr\'e-Quillen
homology. See \cite{quillen} for the original account and
\cite{iyengar} for a very readable modern introduction.

\begin{defn}
Let $k$ be a commutative ring with unit and let $A$ be a commutative
$k$-algebra. The \emph{$A$-module of K\"ahler differentials of $A$
over $k$} is the $A$-module generated by elements $d(a)$ for $a \in A$
subject to the relations that $d$ is $k$-linear and satisfies the
Leibniz rule:
$$ d(ab) = d(a)b + ad(b).$$
This $A$-module is denoted by $\Omega^1_{A|k}$.
\end{defn}
The conditions imply $d(1) = d(1\cdot 1) = 2d(1)$ and hence $d(1)=0$.
For a polynomial algebra $A = k[x_1,\ldots,x_n]$ the $A$-module
$\Omega^1_{k[x_1,\ldots,x_n]|k}$ is freely generated by $dx_1, \ldots,
dx_n$. By induction one can show that $d(x_i^m) = mx_i^{m-1}d(x_i)$
for all $m \geq 2$.

Consider for instance the $\F_p$-algebra $\F_p[x]/(x^p-x)$. Then the module of
K\"ahler differentials is generated by $d(x)$. However, as we are in
characteristic $p$ we get
$$ d(x) = d(x^p) = px^{p-1}d(x) =0$$
and hence $\Omega^1_{\F_p[x]/(x^p-x)|\F_p} =0$.

\begin{rem}
For a commutative $k$-algebra $A$ there is an isomorphism between 
$\Omega^1_{A|k}$ and the first Hochschild homology group of $A$ over $k$: 
Every $a \otimes b$ in Hochschild chain degree one is a cycle and if you send 
$a \otimes b$ to $ad(b)$ then this gives a well-defined map modulo Hochschild 
boundaries and it induces an isomorphism $\HH^k_1(A) \cong \Omega^1_{A|k}$ 
\cite[Proposition 1.1.10]{loday}. 
\end{rem}

\begin{defn}
Let $M$ be an $A$-module. A \emph{$k$-linear derivation from $A$ to $M$}
is a $k$-linear map $\delta\colon A \ra M$ which satisfies the Leibniz
rule.
\end{defn}
The set of all such derivations, $\Der_k(A,M)$,
is an $A$-submodule of the $A$-module
of all $k$-linear maps. The symbol $d$ in the definition of
$\Omega^1_{A|k}$ satisfies the conditions of a derivation, hence
the map
$$ d \colon A \ra \Omega^1_{A|k}, \quad a \mapsto da$$
is a derivation, in fact, it is the \emph{universal derivation}:
\begin{prop} \, \cite{iyengar} \,  
For all $A$-modules $M$ the canonical map
$$ \Hom_A(\Omega^1_{A|k}, M) \ra \Der_k(A,M), \quad f \mapsto f \circ d$$
is an $A$-linear isomorphism.
\end{prop}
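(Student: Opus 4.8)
The plan is to verify the three assertions packed into the statement: that $f \mapsto f \circ d$ is well-defined, that it is $A$-linear, and that it is bijective, with bijectivity established by producing an explicit inverse straight from the presentation of $\Omega^1_{A|k}$. First I would make that presentation explicit: the definition of $\Omega^1_{A|k}$ says it is the quotient of the free $A$-module on symbols $\{[a] : a \in A\}$ by the $A$-submodule $N$ generated by the elements $[a+b]-[a]-[b]$, $[\lambda a]-\lambda[a]$ for $\lambda \in k$, and $[ab]-a[b]-b[a]$, with the universal derivation being $d(a) = [a] + N$. With this in hand, well-definedness is immediate: for $f \in \Hom_A(\Omega^1_{A|k}, M)$ the composite $f \circ d$ is $k$-linear because $d$ is, and it satisfies the Leibniz rule since $f(d(ab)) = f(a\,d(b) + b\,d(a)) = a\,f(d(b)) + b\,f(d(a))$ using $A$-linearity of $f$. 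The assignment $f \mapsto f\circ d$ is itself $A$-linear because $(af)\circ d = a(f\circ d)$ evaluated on each $d(a)$.

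Injectivity is short: if $f \circ d = 0$ then $f$ annihilates every $d(a)$, and since the $d(a)$ generate $\Omega^1_{A|k}$ as an $A$-module, $f = 0$. The one step with any content is surjectivity. Given a $k$-linear derivation $\delta \colon A \ra M$, I would first define an $A$-linear map $\tilde{f}$ from the free $A$-module on $\{[a]\}$ to $M$ by $[a] \mapsto \delta(a)$, and then check that $\tilde{f}$ kills the submodule $N$: the generators $[a+b]-[a]-[b]$ and $[\lambda a]-\lambda[a]$ map to $\delta(a+b)-\delta(a)-\delta(b) = 0$ and $\delta(\lambda a) - \lambda\delta(a) = 0$ by $k$-linearity of $\delta$, and $[ab]-a[b]-b[a]$ maps to $\delta(ab) - a\delta(b) - b\delta(a) = 0$ by the Leibniz rule. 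Hence $\tilde{f}$ factors through an $A$-linear map $f \colon \Omega^1_{A|k} \ra M$ with $f(d(a)) = \delta(a)$, that is, $f \circ d = \delta$. Together with injectivity and $A$-linearity this gives the claimed isomorphism of $A$-modules.

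The main obstacle is purely bookkeeping: making the presentation of $\Omega^1_{A|k}$ precise enough that ``$\tilde f$ respects the defining relations'' is a finite, mechanical check. There is no genuine difficulty here — the proposition is the defining universal property of K\"ahler differentials, and each step follows directly from the definitions of $d$, of a $k$-linear derivation, and of the module presentation above.
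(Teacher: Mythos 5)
Your proof is correct. The paper does not supply its own argument here — it cites the result to Iyengar's survey — so there is nothing internal to compare against; your argument is the standard one (and the one Iyengar gives): present $\Omega^1_{A|k}$ by generators $d(a)$ and relations, check $f \mapsto f\circ d$ is a well-defined $A$-linear map, and produce the inverse by observing that any $k$-linear derivation $\delta$ kills the defining relations of $\Omega^1_{A|k}$, hence factors through the quotient. All three steps (well-definedness, $A$-linearity, bijectivity via the explicit inverse) are carried out correctly.
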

There is another crucial reformulation of the above isomorphism:
$\Der_k(A,M)$ can also be identified with the morphisms of commutative
$k$-algebras from $A$ to the square-zero extension $A \oplus M$. The latter
is the commutative $A$-algebra with underlying module  $A \oplus M$ with
multiplication
$$ (a_1, m_1)(a_2, m_2) = (a_1a_2, a_1m_2 + a_2m_1), \quad a_1, a_2 \in A,
m_1, m_2 \in M.$$
A derivation $\delta \colon A \ra M$ corresponds to the map into the second
component of $A \oplus M$.

The idea of Andr\'e-Quillen homology is to take the derived funtor of
$A \mapsto M \otimes_A \Omega^1_{A|k}$. But in which sense? As $A$ is a
commutative algebra, we need a resolution of $A$ as such an algebra. The
category of differential graded commutative $k$-algebras in general
doesn't have a (right-induced) model structure, so instead one works with
\emph{simplicial resolutions}. The category of simplicial commutative
$k$-algebras \emph{does} have a nice model structure. Let $P_\bullet \ra A$ be
a cofibrant resolution. Each $P_n$ can be chosen to be a polynomial algebra
\cite[\S 4]{iyengar}.
\begin{defn}
The \emph{Andr\'e-Quillen homology of $A$ over $k$ with coefficients
  in $M$} is
$$ \AQ_*(A|k:M) = \pi_*(M \otimes_{P_\bullet} \Omega^1_{P_\bullet|k}).$$
\end{defn}

A definition of $\Omega^1_{A|k}$ in terms of generators and relations is
not suitable for a generalization to commutative ring spectra. Instead we
use the following description:
\begin{lem}
Denote by $I$ the kernel of the multiplication map
$$ \mu \colon A \otimes_k A \ra A.$$
Then $\Omega^1_{A|k}$ is isomorphic to $I/I^2$.
\end{lem}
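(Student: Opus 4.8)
The plan is to write down two mutually inverse $A$-module homomorphisms between $\Omega^1_{A|k}$ and $I/I^2$, one produced from the universal property of K\"ahler differentials recalled above, the other from the universal property of $A \otimes_k A$ as the coproduct of two copies of $A$ in commutative $k$-algebras. First I would fix the $A$-module structure on $I/I^2$: viewing $A \otimes_k A$ as a commutative $A$-algebra via the left unit $a \mapsto a \otimes 1$ makes $I$ and $I/I^2$ into $A$-modules, and one checks that the right unit $a \mapsto 1 \otimes a$ induces the \emph{same} structure on $I/I^2$, because $a \otimes 1 - 1 \otimes a$ lies in $I$ and hence acts as zero on $I/I^2$.

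Next I would define $\delta \colon A \to I/I^2$ by $\delta(a) = \overline{1 \otimes a - a \otimes 1}$. This is visibly $k$-linear, and a direct computation shows that the Leibniz defect $\delta(ab) - a\,\delta(b) - b\,\delta(a)$ is represented by the product $(1 \otimes a - a \otimes 1)(1 \otimes b - b \otimes 1)$, which lies in $I^2$; hence $\delta$ is a $k$-linear derivation. By the universal property of $\Omega^1_{A|k}$ (the proposition above) there is then a unique $A$-module map $\varphi \colon \Omega^1_{A|k} \to I/I^2$ with $\varphi(da) = \delta(a)$.

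For the inverse I would use the square-zero extension $A \oplus \Omega^1_{A|k}$. The structure map $a \mapsto (a,0)$ and the map $a \mapsto (a, da)$ are both $k$-algebra homomorphisms $A \to A \oplus \Omega^1_{A|k}$ (the second precisely because $d$ obeys the Leibniz rule), so by the coproduct property of $A \otimes_k A$ they induce a $k$-algebra map $f \colon A \otimes_k A \to A \oplus \Omega^1_{A|k}$, explicitly $a \otimes b \mapsto (ab,\, a\,db)$. Composing $f$ with the projection to $A$ recovers $\mu$, so $f$ carries $I = \ker\mu$ into the ideal $0 \oplus \Omega^1_{A|k}$; since that ideal squares to zero and $f$ is multiplicative, $f(I^2) = 0$, and we obtain an $A$-module map $\psi \colon I/I^2 \to \Omega^1_{A|k}$, $\overline{\sum_i a_i \otimes b_i} \mapsto \sum_i a_i\,db_i$ for any $\sum_i a_i b_i = 0$.

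Finally I would check $\psi \circ \varphi = \id$ and $\varphi \circ \psi = \id$. The first holds on the generators $da$ since $\psi(\delta(a)) = 1\cdot da - a \cdot d(1) = da$. For the second, given $x = \sum_i a_i \otimes b_i \in I$ one computes $\varphi(\psi(\bar x)) = \overline{\sum_i (a_i \otimes b_i - a_i b_i \otimes 1)} = \bar x$, using $\sum_i a_i b_i = 0$. The one place that needs genuine care — and the step I expect to be the main obstacle to write cleanly rather than conceptually — is the verification that $\delta$ descends to a derivation on $I/I^2$: one must check both that the Leibniz defect really lands in $I^2$ and that the two a priori different $A$-actions on $I/I^2$ coincide. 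Once that is settled, everything else is formal bookkeeping built on the coproduct/square-zero-extension construction of $f$.
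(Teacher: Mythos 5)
Your proof is correct and takes essentially the same approach as the paper, which identifies $d(a)$ with the class of $a\otimes 1 - 1\otimes a$ (you use the opposite sign, which is immaterial) and observes that passing to the quotient by $I^2$ encodes the Leibniz rule. The paper gives only a three-sentence sketch of this idea, whereas you have supplied the full verification: the well-definedness of the $A$-action on $I/I^2$, the derivation property of $\delta$, the construction of the inverse $\psi$ from the coproduct property of $A\otimes_k A$ and the square-zero extension, and the check that $\varphi$ and $\psi$ are mutually inverse.
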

\begin{proof}
Note that $I$ is generated by elements of the form
$a \otimes 1 - 1 \otimes a$. Such an element is identified with $d(a)$.
Taking the quotient by $I^2$ corresponds to the Leibniz rule for $d$.
\end{proof}
The ideal $I$  can also be viewed as a non-unital $k$-algebra and
$I/I^2$ is the \emph{module of indecomposables of $I$}.

This definition translates to brave new commutative rings. Basterra's
work is formulated in the setting of \cite{EKMM}:

\begin{defn}
Let $A$ be a commutative $R$-algebra spectrum.
\begin{itemize}
\item
We define
$I(A \wedge_R A)$ as the pullback
$$ \xymatrix{
{I(A \wedge_R A)} \ar@{.>}[r] \ar@{.>}[d] & {A \wedge_R A} \ar[d]^\mu \\
{\ast} \ar[r] & {A}
}$$
\item
If $N$ is a non-unital commutative $R$-algebra spectrum, then its
\emph{$R$-module of indecomposables}, $Q(N)$, is defined as the
pushout
$$ \xymatrix{
{N \wedge_R N} \ar[d]_{\mu_N} \ar[r] & {\ast} \ar@{.>}[d] \\
{N} \ar@{.>}[r] & {Q(N)}
}$$
\item
For an $A$-module spectrum $M$ we define the \emph{topological
Andr\'e-Quillen homology of $A$ over $R$ with coefficients in $M$}
as
\begin{equation} \label{def:taq}
\TAQ(A|R:M) = \mathbf{L}Q(\mathbf{R}I(A \wedge_R A))
\end{equation}
and denote its homotopy groups as $\TAQ_*(A|R:M)$. We abbreviate
$\mathbf{L}Q(\mathbf{R}I(A \wedge_R A))$ by $\Omega_{A|R}$.
\end{itemize}
\end{defn}
Thus for $\Omega_{A|R}$ we take homotopy invariant versions of the kernel of
the multiplication map followed by taking indecomposables by applying
the right derived functor of $I$ and the left derived functor of $Q$.
\begin{defn}
Dually, \emph{topological Andr\'e-Quillen cohomology of $A$ over $R$
with coefficients in $M$} is $F_A(\Omega_{A|R}, M)$ and we set
$\TAQ^n(A|R; M) = \pi_{-n}F_A(\Omega_{A|R}, M)$.
\end{defn}

Basterra proves \cite[Proposition 3.2]{basterra} that
maps from $\Omega_{A|R}$ to $M$ in the homotopy category of $A$-modules
correspond to maps
in the homotopy category of commutative $R$-algebra maps over $A$ from
$A$ to $A \vee M$ where $A \vee M$ carries the square-zero multiplication.

For example, if $f \colon B \ra BGL_1(S)$ is an infinite loop map and $M(f)$
is the associated Thom spectrum, then Basterra and Mandell show
\cite[Theorem 5 and Corollary]{bmthom} that
$$ \TAQ(M(f)) \simeq M(f) \wedge b$$
where $\Omega^\infty b \simeq B$. In the case of an $E_\infty$-space
$B$ the spherical group ring $\Sigma^\infty_+ B$ has
$$ \TAQ(\Sigma^\infty_+ B) \simeq \Sigma_+ B \wedge b.$$
\section{How do we recognize ring spectra as being (non) commutative?}
\label{sec:obs}
If you have a concrete model of a homotopy type, say in symmetric
spectra, then you can be lucky and this model posseses a commutative
structure and you should be able to check this by hand. Of course you
could also try to disprove commutativity by showing that your spectrum
doesn't have power operations as in \eqref{eq:powerops} and this has
been done in many cases, but sometimes you might need a different approach.

\subsection{Obstructions via filtrations and resolutions}
An obstruction theory for $A_\infty$-structures on homotopy ring
spectra was developed as early as 1989 \cite{roainfty} by Alan
Robinson. Obstruction
theories for $E_\infty$-structures came much later: Goerss-Hopkins and
Robinson  \cite{GH}, \cite{robinson} independently developed one with
obstruction groups that later turned out to be isomorphic
\cite{BaRi}. The idea is to use a filtration or resolution of
an operad such that the corresponding filtration quotients or the
corresponding spectral sequence gives rise to obstruction groups that
contain obstructions for lifting a partial structure to a full
$E_\infty$-ring structure \cite[Theorem 5.6]{robinson},
\cite[Corollary 5.9]{GH}. The Goerss-Hopkins approach also allows to
calculate the homotopy groups of the derived $E_\infty$ mapping space
between two such $E_\infty$-ring spectra \cite[Theorem 4.5]{GH}.

The obstruction groups have as input the algebra of cooperations $E_*E$ of
a spectrum $E$ and they compute Andr\'e-Quillen cohomology groups of the
graded commutative $E_*$-algebra $E_*E$ in the setting of differential graded
(or simplicial)
$E_\infty$-algebras. See \cite{mandell-aqeinfty}, \cite[\S 2.4]{GHall}
for background on these
cohomology groups and see \cite[\S 2]{BaRi} for the comparison results. 
In Robinson's setting these groups are called Gamma-cohomology. 
The obstruction groups vanish if for instance $E_*E$ is \'etale as an
$E_*$-algebra.

If you prefer to work with explicit chain complexes, then there are several
equivalent ones computing Gamma cohomology groups in Robinson's setting
\cite[\S 2.5]{robinson},
\cite[\S 6]{RobWh}, \cite[\S 2]{piri} and therefore, by the comparison result 
from \cite[Theorem 2.6]{BaRi}, computing the 
obstruction groups in the Goerss-Hopkins setting as well.

There is another version of obstruction theory for promoting a homotopy
$T$-algebra structure to an actual one by Johnson and Noel
\cite{johnson-noel} where $T$ is a monad. This includes obstructions for
operadic structures on spectra but also includes for instance group actions.
Noel shows that in certain situations the obstruction theory
\cite{johnson-noel} can be compared to the one of \cite{GH}.

We list some important applications:
\begin{enumerate}
\item
The development of the Hopkins-Miller and Goerss-Hopkins obstruction theory
was motivated by the Morava-$E$-theory spectra, also known as
Lubin-Tate spectra, $E_n$ and variants of those. These are Landweber
exact cohomology theories that govern the deformation theory of height $n$
formal group laws. In \cite{rezk} an obstruction theory was established that
allowed to show that the $E_n$ are $A_\infty$ spectra and that the
Morava stabilizer group $\mathbb{G}_n$ acts on $E_n$ via maps of
$A_\infty$ spectra. In \cite{GH} the corresponding obstruction theory
for $E_\infty$-structures was developed and \cite[Corollaries 7.6, 7.7]{GH}
shows that the $\mathbb{G}_n$-action is via $E_\infty$-maps.
\item
It was known that $\KU$ and $\KO$ are $E_\infty$-spectra and it was also
known that the $p$-completed Adams summand $L_p$ is $E_\infty$.
In \cite{br} Andy Baker and I use Robinson's version of the
$E_\infty$-obstruction
theory to show that these $E_\infty$-structures are unique and that the
$p$-local Adams summand also has a unique $E_\infty$-structure.
Uniqueness also holds for the connective covers \cite{br-covers}. It is 
important to have uniqueness results for $E_\infty$-structures because 
calculations can depend on a choice of such a structure. 
\item
For an $E_\infty$-ring spectrum $R$ there
is a $\theta$-algebra structure on its $p$-adic K-theory,
$\pi_*L_{K(1)}(KU_p \wedge R)$ \cite[Theorem 2.2.4]{GHall} and in good cases 
$$\pi_*L_{K(1)}(KU_p \wedge R) \cong \lim_k (KU_p)_*(R \wedge M(p^k))$$ 
where $M(p^k)$ is the mod-$p^k$ Moore spectrum. The study of such
structures was initiated by McClure in \cite[Chapter IX]{hinfty}. 
There is a variant of the Goerss-Hopkins obstruction theory for realizing for
instance a $\theta$-algebra  
(see \cite[\S 2.4.4]{GHall} and \cite[Theorem 5.14]{ln1}) as a $K(1)$-local
$E_\infty$-ring spectrum. 

There is one for realizing an $E_\infty$-$H\!k$-algebra spectrum
with a fixed Dyer-Lashof structure on its homotopy \cite[Proposition
2.2]{noel} (for $k$ a field of characteristic $p$).  Other variants
can be found in the literature.  

The $\theta$-algebra version was successfully applied by Lawson and
Naumann  \cite{ln1} to
show that $\BP\langle 2\rangle$ at $2$ has an $E_\infty$-structure. By a
different method Hill and Lawson \cite[Theorem 4.2]{hl} find a commutative
model for $\BP\langle 2\rangle$ at the prime $3$. 
\item
Mathew, Naumann and Noel use operations in Morava-$E$-theory to prove 
May's nilpotence conjecture: 
\begin{thm} \, \cite[Theorem A]{mnn} \,
If $R$  is an $H_\infty$-ring spectrum and if $x \in \pi_*(R)$ is in the 
kernel of the Hurewicz homomorphism  $\pi_*(R) \ra H_*(R; \Z)$, then $x$ 
is nilpotent. 
\end{thm}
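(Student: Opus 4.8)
The plan is to reduce May's conjecture to the Devinatz--Hopkins--Smith nilpotence theorem and then to cover the chromatically intermediate layers using the power operations that the $H_\infty$-structure provides on Morava $E$-theory.

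First I would apply the nilpotence theorem in its Morava $K$-theory form: for a ring spectrum $R$ a class $y\in\pi_*(R)$ is nilpotent if and only if its image in $K(m)_*(R)$ is nilpotent for every $0\le m\le\infty$, where by convention $K(0)$ is rational homology and $K(\infty)$ is mod $p$ homology. So it suffices to fix a prime $p$ and prove that the image of $x$ in $K(m)_*(R)$ is nilpotent for each $m$. Two of these cases fall out of the hypothesis immediately. Rationally $H_*(R;\Q)\cong\pi_*(R)\otimes\Q$ with the Hurewicz map being $-\otimes\Q$, so the assumption that $x$ dies in $H_*(R;\Z)$ forces $x$ to be a torsion class; this disposes of $m=0$ and, by an elementary argument with annihilator ideals in $\Z$, also lets me replace $R$ by its $p$-localization (again $H_\infty$), since a torsion class that is nilpotent in $R_{(p)}$ for every $p$ dividing its order is nilpotent in $R$. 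For $m=\infty$ the ring map $\HZ\ra H\mathbb{F}_p$ shows the image of $x$ in $H_*(R;\mathbb{F}_p)$ is zero, a fortiori nilpotent. The whole content therefore reduces to showing, for each $n$ with $1\le n<\infty$, that the image of $x$ in $K(n)_*(R)$ is nilpotent, and only here is the $H_\infty$-hypothesis indispensable.

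So fix $1\le n<\infty$ and suppose, for contradiction, that the image $\bar x$ of $x$ in $K(n)_*(R)$ is not nilpotent. The $H_\infty$-structure maps $\xi^k_R$ induce power operations on $E_n$-homology; by Strickland's theorem the ring $E_n^0(B\Sigma_p)$ modulo its transfer ideal is the ring of order-$p$ subgroup schemes of the universal deformation of the height-$n$ formal group, and together with Rezk's congruences (roughly $\psi(y)\equiv y^p$ modulo $p$) this organizes the $p$-th power operation into a Frobenius-type structure on $E_n{}_*(R)$ and on its reduction $K(n)_*(R)$. The strategy is to combine two facts: on one side, iterating the Frobenius congruence expresses $\bar x^{p^k}$, modulo the transfer ideal, in terms of the iterates $\psi^{\circ k}(\bar x)$, so non-nilpotence of $\bar x$ must be visible to these operations; on the other side, because the entire operation is built out of the extended powers $D_p$ --- the same constructions that govern mod $p$ homology power operations --- the vanishing of $x$ in $H_*(R;\mathbb{F}_p)$ is exactly what is needed to control the ``lower'', transfer-detected part of the power operation, and combined with the Frobenius congruence for the ``top'' part it forces $\bar x^{p^k}=0$ for $k$ large. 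This contradiction completes the argument, and with it the induction over $m$.

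\textbf{The main obstacle} is precisely this last step: the bridge between ``trivial in integral (or mod $p$) homology'' and ``nilpotent in $K(n)$-homology''. There is no ring map in either direction between $\HZ$ (or $H\mathbb{F}_p$) and $K(n)$, so the hypothesis on $x$ conveys, a priori, no chromatic information at all, and the power operations supplied by the $H_\infty$-structure are the only available link. Turning the soft Frobenius congruence into an actual vanishing statement requires the detailed arithmetic of $E_n^0(B\Sigma_p)$ modulo its transfer ideal and a careful accounting of internal degrees --- one is effectively inverting a homotopy class of arbitrary, possibly non-zero, degree and must remain inside the range where the power-operation formalism is valid. Making this rigorous for $H_\infty$- rather than $E_\infty$-rings, handling the degree-zero and non-connective cases, and controlling the comparison between $E_n$-homology and ordinary mod $p$ homology power operations are the points that will demand the most care.
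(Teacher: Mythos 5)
The global skeleton matches the argument in \cite{mnn}: reduce via the Devinatz--Hopkins--Smith nilpotence criterion to showing the image of $x$ is nilpotent in each $K(m)_*(R)$, handle $m=0$ and $m=\infty$ directly from the Hurewicz hypothesis, and invoke the $H_\infty$-power operations at the intermediate heights. The gap sits exactly where you flag it, and your proposed mechanism there is not the one that closes it. You want the vanishing of $x$ in $H_*(R;\F_p)$ to control the transfer-detected part of the $E_n$-power operation, but --- as you yourself observe --- there is no ring map between $H\F_p$ and $E_n$ (or $K(n)$), so the $H\F_p$-triviality of $x$ places no direct constraint on the $E_n$-theoretic extended power of $x$; the arithmetic of $E_n^0(B\Sigma_p)$ modulo its transfer ideal, however detailed, does not by itself manufacture such a constraint.

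What the proof in \cite{mnn} actually uses for $0<n<\infty$ is only that $x$ is \emph{torsion} --- a fact you already extracted from the rational Hurewicz vanishing --- combined with an explicit nilpotence bound for torsion classes in $K(n)$-local $H_\infty$-ring spectra over $E_n$- (equivalently $K_n$-) algebras. The input that makes that bound available is not the $H\F_p$-vanishing of $x$ but rather $K(n)$-local Tate vanishing (Greenlees--Sadofsky, Kuhn): for a $K(n)$-local spectrum the $\Sigma_p$-Tate construction is $K(n)$-acyclic, so $K(n)$-locally the norm map is an equivalence, the transfer ideal becomes accessible, and the Frobenius congruence $P_p(y)\equiv y^p$ can be iterated against the torsion hypothesis to force a vanishing power. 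Since $\pi_*(R) \to \pi_*\bigl(L_{K(n)}(E_n \wedge R)\bigr)$ is a ring map, the image of $x$ there is still torsion, the bound applies, and nilpotence of the image of $x$ in $K(n)_*(R)$ follows. The $H\F_p$-vanishing is only needed at $m=\infty$; using it at the intermediate heights, as your sketch proposes, is the step that must be replaced by the Tate-vanishing input.
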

They use this -- among many other applications -- for the following result 
about $E_\infty$-ring spectra:
\begin{thm}  \, \cite[Proposition 4.2]{mnn} \,
If $R$ is an $E_\infty$-ring spectrum and if there is an $m \in \Z$, $m 
\neq 0$ with $m\cdot 1=0 \in \pi_0(R)$, then for all primes $p$ and for all 
$n \geq 1$: 
$$ K(n)_*(R) \cong 0.$$
\end{thm}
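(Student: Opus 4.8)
The plan is to reduce, prime by prime, to a single connective universal example, and then to combine Theorem~A with the Dyer-Lashof structure carried by $E_\infty$-rings. Fix a prime $\ell$ and read $K(n)$ at $\ell$. If $\ell\nmid m$, then $\pi_*(K(n)\wedge R)$ is at once a module over $\pi_*R$, where $m\cdot 1=0$, and over $\pi_*K(n)=\F_\ell[v_n^{\pm 1}]$, where $\ell\cdot 1=0$; since $\gcd(\ell,m)=1$ the unit lies in the ideal generated by $\ell$ and $m$, forcing $\pi_*(K(n)\wedge R)=0$, so there is nothing to prove. If $\ell=p$ divides $m$, then $K(n)$ is $p$-local, so $K(n)\wedge R\simeq K(n)\wedge R_{(p)}$ and $R_{(p)}$ is an $E_\infty$-ring with $p^a\cdot 1=0$ for $a=v_p(m)$; since also $H\Q\wedge R\simeq\ast$ we already have $K(0)_*R=0$, and only $1\le n<\infty$ remains. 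From here on I would take $R$ to be $p$-local with $p^a\cdot 1=0$.

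The unit $S\to R$ sends $p^a$ to a null class, so by its universal property it factors through the $E_\infty$-ring $A:=S/\!\!/p^a$ universal among $E_\infty$-rings on which $p^a$ becomes null; hence $R$ is an $A$-algebra and $K(n)\wedge R$ is a module over $K(n)\wedge A$, and it suffices to prove $K(n)_*A=0$. The gain is that $A$, a pushout of connective $E_\infty$-rings, is \emph{connective}, so no unboundedness intervenes. Since $\pi_0A$ is a $\Z/p^a$-algebra, each $\pi_jA$ is a $\Z/p^a$-module, so every Postnikov section $\tau_{\le N}A$ is a finite extension of suspensions of $H\Z/p^a$-modules; combining the classical vanishing $K(n)\wedge H\Z\simeq\ast$ for $1\le n<\infty$ with the cofiber sequence $H\Z\xrightarrow{p^a}H\Z\to H\Z/p^a$ yields $K(n)\wedge\tau_{\le N}A\simeq\ast$, and therefore $K(n)\wedge A\simeq K(n)\wedge\tau_{\ge N}A$ for every $N$: the obstruction to the vanishing sits arbitrarily far up the Postnikov tower.

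Because $A$ is $E_\infty$, hence $H_\infty$, its homology $H_*(A;\F_p)=\pi_*(H\F_p\wedge A)$ is a connective algebra carrying Dyer-Lashof operations, and I would compute $K(n)_*A$ through the $v_n$-Bockstein — equivalently the $K(n)$-based Atiyah-Hirzebruch — spectral sequence, with $E_2=H_*(A;K(n)_*)$. The differentials produced by the interaction of multiplication by $v_n$ with the Dyer-Lashof operations are of the kind that annihilate $E_2$ in the classical calculation $K(n)_*H\F_p=0=K(n)_*H\Z$; what still has to be excluded is the presence of stray permanent cycles, and that is exactly where Theorem~A enters: applied to the $H_\infty$-ring $A$ it says that any element of $\pi_*A$ killed by the integral Hurewicz homomorphism is nilpotent, which forces the classes not accounted for by the Dyer-Lashof bookkeeping to die as well. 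Together with the reductions above this gives $K(n)_*A=0$, and hence $K(n)_*R=0$.

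The crux — and the step I expect to be the main obstacle — is establishing that this spectral sequence genuinely collapses to zero: connectivity estimates cannot do it, since $K(n)$ is periodic rather than bounded below, so one really needs both halves of the $H_\infty$-input, the Dyer-Lashof operations for the differentials and Theorem~A for the absence of the exotic permanent cycles that a merely $A_\infty$ ring spectrum could carry. A variant endgame that avoids some of this bookkeeping is to pass to $L_{K(n)}A$, still an $E_\infty$-ring with $p^a\cdot 1=0$: the unit map $L_{K(n)}S\to E_n$ into Morava $E$-theory — an $E_\infty$-ring by Goerss-Hopkins — is descendable, so $L_{K(n)}A$ is detected by the $E_n$-algebra $L_{K(n)}(E_n\wedge A)$, and on the homotopy of a $K(n)$-local $E_\infty$-$E_n$-algebra the power operations (Rezk's congruence relating the total power operation to the $p$-th power) are incompatible with $p^a\cdot 1=0$ unless that homotopy vanishes.
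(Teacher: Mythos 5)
Your reductions are sound (restricting to a prime $p$ dividing $m$, observing $K(0)_*R = H\Q_*R = 0$, and factoring the unit through the universal $E_\infty$-ring $A = S/\!\!/p^a$ so that it suffices to show $K(n)_*A = 0$), but the argument you then give for that key vanishing is a gap, and you say so yourself. The appeal to Theorem~A is not made precise: Theorem~A concerns nilpotence of elements of $\pi_*A$ that die under the $H\Z$-Hurewicz map, whereas what you need is the vanishing of $K(n)_*A$, and there is no clean way to convert ``stray permanent cycles in the $v_n$-Bockstein spectral sequence'' into non-nilpotent Hurewicz-trivial homotopy classes of $A$; the two statements live in different groups. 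Likewise, the $B_2 = H_*(A; K(n)_*)$ you propose is the free Dyer--Lashof algebra on a class dual to the nullhomotopy of $p^a$, which is far larger than the $E_2$-pages occurring in Ravenel's computations of $K(n)_*H\F_p$ and $K(n)_*H\Z$, and there is no argument given that the analogous differentials sweep it clean. The alternative endgame via $E_n$-descent and Rezk's congruence is also only gestured at; Rezk's congruence $\psi^p(x) \equiv x^p \pmod p$ becomes the innocuous Frobenius identity once $p = 0$, so it does not by itself rule out a nonzero $K(n)$-local $E_\infty$-$E_n$-algebra with $p$-torsion unit.

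The proof in \cite{mnn} is considerably shorter and does not pass through any universal ring or spectral sequence. Since the unit $1 \in \pi_0 R$ is $m$-torsion, $\pi_*(R) \otimes \Q = 0$, so $H\Q \wedge R \simeq \ast$. Now apply the smashing localization $L_n = L_{E(n)}$: the $E_\infty$-ring $L_n R = L_n S \wedge R$ satisfies
\[
H\Z \wedge L_n R \;\simeq\; (L_n H\Z) \wedge R \;\simeq\; H\Q \wedge R \;\simeq\; \ast,
\]
because $L_n H\Z \simeq H\Q$ (only $K(0)$ sees $H\Z$). So the unit of $\pi_0(L_n R)$ is in the kernel of the $H\Z$-Hurewicz map, and Theorem~A, applied to the $H_\infty$-ring $L_n R$, forces the unit to be nilpotent, hence zero, hence $L_n R \simeq \ast$. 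Since $K(n)$ is $E(n)$-local and $L_n$ is smashing, $K(n) \wedge R \simeq K(n) \wedge L_n R \simeq \ast$. The decisive step you were missing is the computation $H\Z \wedge L_n R \simeq \ast$, which makes Theorem~A applicable directly to the unit rather than to a would-be bookkeeping of exotic cycles.
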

Lawson observed  that using $K(n)$-techniques (see \cite{ravenelnil} for 
background) this implies that for finite $E_\infty$-ring spectra $R$ either 
the rational homology is non-trivial or $R$ is weakly contractible, because if 
$H_*(R; \Q) \cong 0$, then by the above result all the Morava K-theories also 
vanish on $R$, but then the finiteness of $R$ implies weak 
contractibility (see \cite[Remark 4.3]{mnn} for the full argument).   
\end{enumerate}

The Dyer-Lashof variant is for instance important when one wants to decide
whether a given $H_\infty$-map can be upgraded to an $E_\infty$-map: Roughly
speaking, an $H_\infty$-spectrum is like an $E_\infty$-spectrum in the
homotopy category.  
You can find applications of
this approach for instance in Noel's work \cite{noel} and in
\cite{johnson-noel}.

Other spectra, for instance, $\BP$, come with homology operations just because
they sit in the right place: analyzing the maps $\MU \ra \BP \ra H\F_p$ 
gives \cite[p.~63]{hinfty} that $(H\F_p)_*(\BP)$ embeds into the dual 
of the Steenrod algebra such that  $(H\F_p)_*(\BP)$ is closed under the 
action of the Dyer-Lashof algebra -- even without establishing a
structured multiplication on $\BP$.  This led Lawson \cite{lawson} to look
for the right obstructions for an $E_\infty$-structure of $\BP$ at $2$ via 
secondary operations (see Theorem \ref{thm:bp}).

\subsection{Obstructions via Postnikov towers}
A different approach to obstruction theory is to consider Postnikov
towers in the world of commutative ring spectra \cite{basterra} or in
the setting of $E_n$-algebras \cite{bama}.

To this end Basterra uses $\TAQ$-cohomology to lift ordinary $k$-invariants of
a connective commutative ring spectrum to $k$-invariants in a multiplicative
Postnikov tower:

Assume that $R$ is a connective commutative ring spectrum. Then there is a
map of commutative ring spectra
$$ p_0 \colon R \ra H(\pi_0(R))$$
and without loss of generality we can assume that $p_0$ is a cofibration of
commutative ring spectra that realizes the identity on $\pi_0$, 
\ie, $\pi_0(p_0) = \id_{\pi_0(R)}$.

If we abbreviate $\pi_0(R)$ to $B$ and if $M$ is a $B$-module,
then an element in $\TAQ^n(A|R; H\!M)$ corresponds to a morphism
$\varphi \colon A \ra A\vee \Sigma^n H\!M$ in the homotopy category of
$R$-algebra spectra over $A$ and we can form the pullback of
$$\xymatrix{
{} & {A} \ar[d]^{i_A} \\
{A} \ar[r]^(0.4)\varphi & {A \vee \Sigma^n H\!M}
}$$
If we postcompose $\varphi$ with the projection map to $\Sigma^nH\!M$
\begin{equation} \label{eq:forgetful}
 \xymatrix@1{{A} \ar[r]^(0.4)\varphi & {A \vee  \Sigma^n H\!M} \ar[r] &
{\Sigma^n H\!M}   }
\end{equation}
such a $\TAQ$-class forgets to an $\Ext$-class in
$\Ext^n_R(A; H\!M)$, \ie, if $R$ is the sphere spectrum, 
to an ordinary cohomology class. Basterra shows that
this projection maps  $k$-invariants in the world of
commutative ring spectra to ordinary $k$-invariants of the
underlying spectrum. 
\begin{thm} \, \cite[Theorem 8.1]{basterra} \,
For any connective commutative ring spectrum $A$ there is a sequence of
commutative ring spectra $A_i$, $\pi_0(A)$-modules $M_i$ and elements
$$ \tilde{k}_i \in \TAQ^{i+2}(A_{i}|S; H\!M_{i+1})$$
such that
\begin{itemize}
\item
$A_0 = H\pi_0(A)$ and $A_{i+1}$ is the pullback of $A_i$ with
respect to $\tilde{k}_i$,
\item
$\pi_jA_i =0$ for all $j >i$, 
\item
there are maps of commutative ring spectra $\lambda_i \colon A \ra
A_i$ which induce an isomorphism in homotopy groups up to degree $i$ 
such that the diagram 
$$\xymatrix{
{} & {A_{i+1}} \ar[d] \\
{A} \ar[r]^{\lambda_i} \ar[ur]^{\lambda_{i+1}} & {A_i}
}$$
commutes in the homotopy category of commutative ring spectra.
\end{itemize}
\end{thm}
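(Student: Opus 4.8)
The plan is to build the tower by induction on $i$, the two main tools being Basterra's identification \cite[Proposition 3.2]{basterra} of $\TAQ$-classes with maps into square-zero extensions, and the transitivity cofiber sequence of cotangent complexes together with a Hurewicz-type connectivity estimate for relative topological André--Quillen homology \cite{basterra} (ultimately resting on the stabilization picture for $\TAQ$, cf.\ \cite{bmthom}, \cite{kuhn}). Throughout I would tacitly replace all spectra in sight by cofibrant commutative $R$-algebras so that $\Omega_{-|-}$, square-zero extensions and homotopy pullbacks are homotopically meaningful. For the base case set $A_0 = H\pi_0(A)$ and $\lambda_0 = p_0$, the cofibration of commutative ring spectra discussed just before the theorem, which realizes the identity on $\pi_0$; then $\pi_j A_0 = 0$ for $j > 0$ and $\pi_0(\lambda_0)$ is an isomorphism.

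For the inductive step, suppose $A_i$ and $\lambda_i \colon A \to A_i$ have been constructed with $\pi_j A_i = 0$ for $j > i$ and $\pi_j(\lambda_i)$ an isomorphism for $j \leq i$. A short long-exact-sequence argument shows that the homotopy fiber $F_i$ of $\lambda_i$ is $i$-connected with $\pi_{i+1} F_i \cong \pi_{i+1}(A) =: M_{i+1}$, which I regard as a $\pi_0(A)$-module and hence as an $A_i$-module via $A_i \to H\pi_0(A_i) = H\pi_0(A)$. I would first dispose of the ``easy half'': for \emph{any} class $\tilde k_i \in \TAQ^{i+2}(A_i|S; H\!M_{i+1})$, with corresponding map $\kappa_i \colon A_i \to A_i \vee \Sigma^{i+2}H\!M_{i+1}$ into the square-zero extension, the homotopy pullback $A_{i+1}$ of $\kappa_i$ along the zero section has homotopy fiber $\Sigma^{i+1}H\!M_{i+1}$ over $A_i$ (the fiber of the zero section of a square-zero extension), so that $\pi_j A_{i+1} \cong \pi_j A_i$ for $j \leq i$, $\pi_{i+1} A_{i+1} \cong M_{i+1}$, and $\pi_j A_{i+1} = 0$ for $j > i+1$, \emph{regardless of the choice of} $\tilde k_i$. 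The real content is therefore to choose $\tilde k_i$ so that $\lambda_i$ lifts to a map $\lambda_{i+1}\colon A \to A_{i+1}$ over $A_i$ which is moreover an isomorphism on $\pi_{i+1}$.

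The key step is to take $\tilde k_i$ to be the image of a \emph{relative} class under the map
$$ \TAQ^{i+2}(A_i|A; H\!M_{i+1}) \lra \TAQ^{i+2}(A_i|S; H\!M_{i+1}) $$
arising from the transitivity cofiber sequence of $A_i$-modules $A_i \wedge_A \Omega_{A|S} \to \Omega_{A_i|S} \to \Omega_{A_i|A}$; the associated long exact sequence shows any such class pulls back to $0$ in $\TAQ^{i+2}(A|S; H\!M_{i+1})$, and by \cite[Proposition 3.2]{basterra} this vanishing is exactly the obstruction to lifting $\lambda_i$ into the homotopy pullback, so $\lambda_{i+1}$ over $A_i$ exists. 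For the \emph{specific} relative class I would use the tautological one: the $i$-connectedness of $F_i$ forces, by the relative Hurewicz estimate for $\TAQ$, that $\Omega_{A_i|A}$ is $(i+1)$-connected with $\pi_{i+2}\Omega_{A_i|A} \cong \pi_{i+1}F_i = M_{i+1}$, whence $\tau_{\leq i+2}\Omega_{A_i|A} \simeq \Sigma^{i+2}H\!M_{i+1}$, and the truncation map $\Omega_{A_i|A} \to \tau_{\leq i+2}\Omega_{A_i|A}$ defines $\bar k_i \in \TAQ^{i+2}(A_i|A; H\!M_{i+1})$; put $\tilde k_i$ equal to its image. Finally I would check that $\pi_{i+1}(\lambda_{i+1})$ is an isomorphism by comparing the fiber sequence $F_i \to A \to A_i$ with $\Sigma^{i+1}H\!M_{i+1} \to A_{i+1} \to A_i$: because $\bar k_i$ is the truncation of $\Omega_{A_i|A}$, the induced comparison map $F_i \to \Sigma^{i+1}H\!M_{i+1}$ is, under the Hurewicz identification, the Postnikov truncation $F_i \to \tau_{\leq i+1}F_i$, hence an isomorphism on $\pi_{i+1}$; so $\mathrm{fib}(\lambda_{i+1})$ is $(i+1)$-connected, i.e.\ $\lambda_{i+1}$ is an isomorphism on $\pi_{\leq i+1}$. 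The commutativity of the tower squares is automatic since $\lambda_{i+1}$ is a map over $A_i$, i.e.\ $q\circ\lambda_{i+1}\simeq\lambda_i$ with $q\colon A_{i+1}\to A_i$ the projection.

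The hard part will be the relative Hurewicz statement for $\TAQ$ together with its compatibility with the square-zero/pullback construction: namely that passing to the truncated relative cotangent complex, forming the associated square-zero extension, and pulling back the zero section genuinely ``models'' the attachment of the next Postnikov cell of $A$, so that the comparison map $F_i \to \Sigma^{i+1}H\!M_{i+1}$ really is the truncation on $\pi_{i+1}$. This is where the stabilization results for $\TAQ$ and the naturality of the correspondence in \cite[Proposition 3.2]{basterra} do the work; the remaining ingredients (homotopy groups of homotopy pullbacks, the two long exact sequences, and the cofibrant/fibrant replacements needed to make $\Omega_{-|-}$ well-defined) are routine bookkeeping.
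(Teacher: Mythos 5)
Your outline is essentially Basterra's own argument for \cite[Theorem 8.1]{basterra}, and the survey under review does not reprove it: after the statement it only sketches the base case (``You start with $A_0 = H\pi_0(A)$ and then you have to find a suitable map $A_0 \ra A_0 \vee \Sigma^2 H(\pi_1(A))$\dots''), so the comparison has to be with Basterra's paper, which is where your cited ingredients --- Proposition 3.2 (square-zero/derivation correspondence), the transitivity cofiber sequence for $\Omega_{-|-}$, and the connectivity estimate for relative $\TAQ$ --- actually live. Your bookkeeping is correct: $F_i$ is $i$-connected with $\pi_{i+1}F_i \cong M_{i+1}$, so $\Omega_{A_i|A}$ is $(i+1)$-connected with bottom homotopy $M_{i+1}$ in degree $i+2$, the truncation map does land in $\TAQ^{i+2}(A_i|A; H\!M_{i+1})$, and its image in $\TAQ^{i+2}(A_i|S; H\!M_{i+1})$ pulls back to zero in $\TAQ^{i+2}(A|S; H\!M_{i+1})$ via the transitivity long exact sequence, giving the existence of $\lambda_{i+1}$. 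The pullback computation ($\mathrm{fib}(A_{i+1}\to A_i)\simeq \Sigma^{i+1}H\!M_{i+1}$ independent of the choice of $\tilde k_i$) is also right.

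The one place where your argument is thinner than a full proof --- and you flag this yourself --- is the final verification that $\pi_{i+1}(\lambda_{i+1})$ is an isomorphism. Note that the lift of $\lambda_i$ to $A_{i+1}$ is not unique (lifts form a torsor over $\TAQ^{i+1}(A|S;H\!M_{i+1})$), and for an arbitrary lift the claim can fail; for example, if one took $\tilde k_i = 0$ then $A_{i+1}\simeq A_i\vee \Sigma^{i+1}H\!M_{i+1}$ and the ``zero lift'' $i_{A_i}\circ\lambda_i$ kills $\pi_{i+1}$. So you must use the \emph{canonical} nullhomotopy of $\lambda_i^*\tilde k_i$ supplied by the transitivity sequence, and then match, under the $\TAQ$-Hurewicz map, the induced comparison $F_i\ra\Sigma^{i+1}H\!M_{i+1}$ with the Postnikov truncation. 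This is indeed the crux of Basterra's proof; in her paper it is handled by the stabilization/connectivity results you point to, and your sketch identifies the right tools and the right place where they must be deployed. In short: correct approach, correct intermediate computations, with the genuine content correctly isolated but left as a black box.
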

You start with $A_0 = H\pi_0(A)$ and then you have to find a suitable
map $A_0 \ra A_0 \vee
\Sigma^2H(\pi_1(A))$ as a starting point of the multiplicative Postnikov tower.

Basterra's result can be used as an obstruction theory as follows. If
$A$ is a connective spectrum then it has an ordinary Postnikov tower with
$k$-invariants living in ordinary cohomology groups
$$ k_{i} \in  H^{i+2}(A_{i}; \pi_{i+1}(A)).$$
You can then investigate
whether it is possible to find multiplicative $k$-invariants
$$ \tilde{k}_{i} \in  \TAQ^{i+2}(A_{i}|S; H\pi_{i+1}(A))$$
that forget to the $k_{i}$'s under the map from \eqref{eq:forgetful}.

Basterra and Mandell show  the following partial result using Postnikov
towers for $E_n$-algebra spectra.
\begin{thm} \, \cite[Theorem 1.1]{bama} \,
The Brown Peterson spectrum, $\BP$, has an $E_4$-structure at every prime.
\end{thm}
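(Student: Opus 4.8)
The plan is to construct $\BP$ inside the homotopy category of $E_4$-ring spectra by promoting its ordinary Postnikov tower, one stage at a time, to a tower of $E_4$-ring spectra. This is the $E_n$-analogue of the multiplicative Postnikov tower of Basterra quoted above (\cite[Theorem 8.1]{basterra}), with ordinary topological Andr\'e-Quillen cohomology replaced throughout by its $E_4$-variant $\TAQ_{E_4}$, built from the cotangent complex in the category of $E_4$-$S$-algebras. Since $\BP$ is connective with $\pi_0\BP=\Z_{(p)}$, the tower starts at $\BP\langle 0\rangle=\HZ_{(p)}$, which is $E_\infty$ and in particular $E_4$. Suppose inductively that an $E_4$-ring spectrum realising the $i$-th Postnikov stage $\BP\langle i\rangle$ has been produced; the $(i+1)$-st stage is then a homotopy pullback of it along an $E_4$-$k$-invariant
$$\tilde k_i\in\TAQ_{E_4}^{i+2}(\BP\langle i\rangle|S; H\pi_{i+1}\BP),$$
and, since each $\tilde k_i$ must be chosen so that it forgets along the map of \eqref{eq:forgetful} to the genuine $k$-invariant $k_i\in H^{i+2}(\BP\langle i\rangle;\pi_{i+1}\BP)$ of $\BP$, a successful induction produces an $E_4$-ring spectrum whose underlying spectrum is $\BP$. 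The statement therefore reduces to the claim that, at every stage, $k_i$ lifts to such a $\tilde k_i$ and a secondary obstruction -- again an $E_4$-Andr\'e-Quillen class -- to the pullback having the correct homotopy type vanishes.

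The engine for this is a computation of the cokernel of the forgetful map $\TAQ_{E_4}^{i+2}(\BP\langle i\rangle|S;H\pi_{i+1}\BP)\to H^{i+2}(\BP\langle i\rangle;\pi_{i+1}\BP)$, together with the secondary obstruction; both are measured by the decomposables of $\BP$, that is, by its multiplicative structure together with the homology operations carried by the little $4$-cubes operad $E_4$. Modulo $p$ this becomes algebraic, involving only $H_*(\BP;\F_p)$ with its $E_4$-operations -- and, crucially, one knows these operations without already knowing the $E_4$-structure, since the composite of ring maps $\MU\to\BP\to H\F_p$ identifies $H_*(\BP;\F_p)$ with a Dyer-Lashof-closed subalgebra of the dual Steenrod algebra, exactly as recorded in the excerpt. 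In this way the obstruction groups turn into algebraic $E_4$-Andr\'e-Quillen cohomology groups -- the $E_n$-analogue of Robinson's Gamma cohomology -- of the $E_4$-algebra $H_*(\BP;\F_p)$, which I would attack from the polynomial structure $\pi_*\BP=\Z_{(p)}[v_1,v_2,\ldots]$ together with the known action of the operations on the generators $v_i$.

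It is at this step that the bound $n=4$ is forced, and this is the main obstacle. The discrepancy measured by the forgetful map -- equivalently, the obstruction to lifting an $E_n$-structure to an $E_{n+1}$-structure -- is carried precisely by those homology operations that become available only in passing from $E_n$- to $E_{n+1}$-algebras; on the operad side these occur in degrees tied to the Browder bracket (whose degree for $E_n$-algebras is $n-1$) and to Dyer-Lashof-type operations (raising degree by multiples of $2(p-1)$), while on the $\BP$ side the generator $v_1$ already sits in degree $2p-2$ and the further $v_i$ are spread out exponentially, so the combinatorics of fitting the former into the latter is what cuts off at $n=4$. The heart of the proof is then the bookkeeping that matches these two sparse patterns and verifies that, through $n=4$, every $k_i$ does lift and every secondary obstruction does vanish, so the induction runs and yields the desired $E_4$-ring structure on $\BP$. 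This computation is delicate exactly because it cannot persist to $n=\infty$: by Lawson's theorem $\BP$ at the prime $2$ admits no $E_\infty$-structure \cite{lawson} (see Theorem \ref{thm:bp}).
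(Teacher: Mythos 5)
Your high-level plan is the right one: Basterra and Mandell do prove the theorem by developing Postnikov towers in the category of $E_n$-ring spectra, with $k$-invariants living in an $E_n$-analogue of topological Andr\'e--Quillen cohomology, and the key computational input is indeed the mod-$p$ homology of $\BP$ with its Dyer--Lashof action inherited from $\MU \ra \BP \ra H\F_p$. So the scaffolding of the proof matches the paper.

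There are, however, problems in the middle of your argument. First, you conflate two distinct obstruction problems: the cokernel of the forgetful map $\TAQ_{E_n}^{*}(\,\cdot\,|S;H\!M)\ra H^{*}(\,\cdot\,;M)$ measures whether an ordinary $k$-invariant of $\BP\langle i\rangle$ lifts to a \emph{multiplicative} one in the $E_n$-setting; it is not the same as ``the obstruction to promoting an $E_n$-structure to an $E_{n+1}$-structure,'' which is a different question that Basterra--Mandell's argument never poses. The tower is built once, for a fixed $n$, by climbing the Postnikov stages; there is no secondary ascent through operadic arities. Second, the ``secondary obstruction to the pullback having the correct homotopy type'' that you invoke is not part of the framework: once $\tilde k_i$ is chosen so that it forgets to the ordinary $k$-invariant $k_i$, the pullback has the correct underlying spectrum by construction, exactly as in Basterra's Theorem 8.1; there is no further homotopy-type obstruction to be killed. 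Third, and most importantly, your explanation of the bound $n=4$ cannot be right. If the cutoff were governed by matching the degree $n-1$ of the $E_n$-Browder bracket (or the degrees $2i(p-1)$ of Dyer--Lashof operations) against the degrees $2p^i-2$ of the $v_i$, the resulting bound would depend on the prime $p$; but the theorem asserts $E_4$ uniformly at all primes. In fact, parity arguments of the kind you describe only show that the Browder bracket is forced to vanish on the even-concentrated $\pi_*\BP$ whenever $n$ is even -- which is a reason such structures can \emph{exist}, not a reason they must \emph{stop} at $n=4$. The number $4$ in the paper is what they are able to prove by their connectivity estimates for $E_n$-$\TAQ$, not a number that is ``forced'' from below; indeed between $E_4$ and Lawson's negative result at $E_{12}$ (at $p=2$) the answer remains open, which is incompatible with your claim that $4$ is forced by the combinatorics.
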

This ensures by the main result of \cite{mandellderived} that the
derived category of $\BP$-module spectra has a symmetric monoidal smash product.
Tyler Lawson, however, showed that there are certain secondary
operations in the $\mathbb{F}_2$-homology of every such spectrum with an
$E_{12}$-structure and he could show that these are not present in the
$\mathbb{F}_2$-homology of $\BP$ at $2$. Let $\BP\langle n\rangle$
denote the spectrum $BP/(v_{n+1},v_{n+2},\ldots)$. 
\begin{thm} \, \cite[Theorem 1.1.2]{lawson} \, \label{thm:bp}
The Brown-Peterson spectrum at the prime $2$ does not possess an
$E_n$-structure for any $n$ with $12 \leq n \leq \infty$. The
truncated Brown-Peterson spectrum $\BP\langle n\rangle$ for $n \geq 4$
cannot have an
$E_n$-structure for any $n$ with $12 \leq n \leq \infty$.
\end{thm}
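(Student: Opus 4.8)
The plan is to reduce everything to ruling out an $E_{12}$-structure. Since there are maps of operads $E_{12} \to E_n$ for all $12 \le n \le \infty$ (inclusion of the standard cubes), an $E_n$-structure, and a fortiori an $E_\infty$-structure, on a spectrum restricts to an $E_{12}$-structure; so it suffices to show that neither $\BP$ at $2$ nor $\BP\langle m\rangle$ for $m \ge 4$ admits an $E_{12}$-structure. The obstruction will be a secondary $\F_2$-homology operation, so I would first recall the primary input: for an $E_n$-ring spectrum $R$ the mod $2$ homology $H_*(R;\F_2)$ carries a truncated family of Dyer--Lashof-type operations (coming from $H_*$ of the configuration-space model of the $n$-fold extended power), compatible with the Steenrod action, satisfying Adem-type and Nishida-type relations that degenerate to the usual $E_\infty$ relations as $n$ grows. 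The key point is that, for $n=12$, a particular composite $\Psi$ of such primary operations is \emph{canonically null-homotopic} --- the null-homotopy being a genuinely new feature of the little-$12$-cubes operad, not present for smaller $n$ --- and hence defines a secondary operation $\Theta$, natural for maps of $E_{12}$-ring spectra, on those classes $x$ for which the relevant constituent primary operations already vanish.

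Next I would carry out the two computations that make $\Theta$ an effective obstruction. First, evaluate $\Theta$ on a low-dimensional generator of a free $E_{12}$-algebra, equivalently on the corresponding class in the $E_\infty$-ring $H\F_2$, whose $\F_2$-homology operations are completely known: using a bar-type or cellular filtration of the extended powers together with bookkeeping of the Dyer--Lashof action, show that $\Theta$ applied to the degree-$2$ class $\bar\xi_1^2 \in H_2(H\F_2;\F_2)$ takes a definite nonzero value $\kappa$ in $\mathcal{A}_* = H_*(H\F_2;\F_2)$, and --- crucially --- that $\kappa$ does \emph{not} lie in the indeterminacy of $\Theta$ at that class. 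Second, recall the known structure $H_*(\BP;\F_2) \cong \F_2[\bar\xi_1^2, \bar\xi_2^2, \ldots]$ as a subalgebra of $\mathcal{A}_*$ concentrated in even degrees, and more generally $H_*(\BP\langle m\rangle;\F_2) \cong \F_2[\bar\xi_1^2,\ldots,\bar\xi_{m+1}^2,\bar\xi_{m+2},\bar\xi_{m+3},\ldots]$, which agrees with $H_*(\BP;\F_2)$ in the range of degrees in which $\kappa$ lives precisely when $m \ge 4$; in particular $\kappa$ lies in a degree where all of these subalgebras vanish. One must also check that the constituent primary operations vanish on $\bar\xi_1^2 \in H_*(\BP;\F_2)$, which holds for dimension reasons, so that $\Theta$ is defined there at all.

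Now conclude by naturality. If $\BP$ carried an $E_{12}$-structure, then by multiplicative Postnikov-tower theory for $E_n$-ring spectra (\cite{basterra}, \cite{bama}), together with the $E_\infty$-ring map $H\Z_{(2)} \to H\F_2$, the reduction $\BP \to H\F_2$ can be taken to be an $E_{12}$-ring map, inducing on mod $2$ homology the standard embedding $H_*(\BP;\F_2) \hookrightarrow \mathcal{A}_*$. Naturality of $\Theta$ then forces the coset $\Theta^{H\F_2}(\bar\xi_1^2)$ to be the image of $\Theta^{\BP}(\bar\xi_1^2)$; but the target group $H_*(\BP;\F_2)$ in the degree of $\kappa$ is zero, so $\Theta^{\BP}(\bar\xi_1^2)$ maps to the image of $0$, forcing $\kappa$ into the indeterminacy on the $H\F_2$ side --- contradicting the computation above. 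The identical argument, with $\BP\langle m\rangle$ in place of $\BP$, applies whenever $H_*(\BP\langle m\rangle;\F_2)$ still vanishes in the degree of $\kappa$, \ie, for $m \ge 4$; for smaller $m$ the extra polynomial generators $\bar\xi_{m+2}, \bar\xi_{m+3},\ldots$ enter too early and the comparison group is nonzero, which is why the theorem stops at $\BP\langle 4\rangle$.

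The main obstacle is the construction and computation of $\Theta$: identifying the precise universal relation among primary $E_{12}$-operations that admits a canonical null-homotopy --- and verifying that $12$ is genuinely the smallest value for which this happens, which is where an arity/dimension count for the little-cubes operad enters --- and then computing the secondary operation on a free algebra together with a complete analysis of its indeterminacy. Both steps demand a hands-on understanding of the mod $2$ homology of iterated extended powers for the little-$12$-cubes operad and of the higher Dyer--Lashof structure. Of these, controlling the indeterminacy so that it does not absorb $\kappa$ is the single most delicate point, since if it did the obstruction would simply evaporate.
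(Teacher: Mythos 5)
The survey does not prove this theorem; it only cites Lawson's paper and gives a one-sentence description of the strategy (``certain secondary operations in the $\mathbb{F}_2$-homology of every such spectrum with an $E_{12}$-structure\ldots are not present in the $\mathbb{F}_2$-homology of $\BP$ at $2$''). So the comparison must be against Lawson's actual argument. Your proposal captures the high-level architecture of that argument correctly: reduce to $E_{12}$ via restriction of operads, observe (as the survey notes in Section 6) that the \emph{primary} Dyer--Lashof operations are known to preserve the image of $H_*(\BP;\F_2)$ in $\cala_*$, so the obstruction must be secondary; build a secondary power operation $\Theta$ from a universal relation that acquires a canonical null-homotopy only at $E_{12}$; evaluate $\Theta$ on the degree-$2$ class $\bar\xi_1^2$ in $\cala_*$, where the Dyer--Lashof action is completely known; use the multiplicative Postnikov tower to make $\BP \ra H\F_2$ an $E_{12}$-map; and derive a contradiction from naturality. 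You also correctly explain the role of the $\BP\langle m\rangle$ threshold: the extra odd-degree generators $\bar\xi_{m+2}, \bar\xi_{m+3},\ldots$ appear in degree $2^{m+2}-1$, so they enter too early to leave room for the obstruction unless $m \geq 4$.

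There are two points where the proposal is not yet a proof and where, correspondingly, the real mathematical work lives. First, the entire content of Lawson's paper is the explicit construction of the relation, the computation of the secondary operation on the relevant free algebra, and a careful indeterminacy analysis; your sketch names all three steps as ``the main obstacle'' without carrying them out, so the proposal is a plan rather than a proof. Second, the specific mechanism of the final contradiction as you phrase it --- ``the target group $H_*(\BP;\F_2)$ in the degree of $\kappa$ is zero'' --- is a degree-parity argument that is only valid if $\kappa$ lands in odd degree; $H_*(\BP;\F_2) \cong \F_2[\bar\xi_1^2,\bar\xi_2^2,\ldots]$ is nonzero in every even degree (already $(\bar\xi_1^2)^k$ covers degree $2k$), so a $\kappa$ of even degree would require the more delicate ``specific element not in the subalgebra'' argument, and your phrasing would then be wrong. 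Your own explanation of the $m\geq 4$ threshold in fact presupposes the odd-degree picture (involvement of $\bar\xi_5$ in degree $31$), which is consistent, but you should make that dependence explicit and verify it rather than assert it, since the whole deduction hinges on it.
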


\subsection{Realization of $E_\infty$-spectra via derived algebraic geometry}
There is a completely different important and highly successful approach to
realization problems, namely using \emph{derived algebraic
  geometry}. You can learn about derived algebraic geometry in the
next Chapter of the book.

\section{What are \'etale maps?} \label{sec:etale}
We first recall the algebraic notion of an \'etale $k$-algebra from
\cite[E.1]{loday}: Let $k$ be a commutative ring and let $A$ be a
finitely generated commutative
$k$-algebra. Then $A$ is \emph{\'etale}, if $A$ is flat
over $k$ and if the module of K\"ahler differentials $\Omega^1_{A|k}$ is
trivial. If $\Omega^1_{A|k}=0$, then $k \ra A$ is called \emph{unramified}.
A $k$-algebra $B$ (not necessarily commutative) is called \emph{separable},
if the multiplication map
$$ B \otimes_k B^o \ra B$$
has a section as a $B$-bimodule map. In algebra, a commutative separable
algebra has Hochschild homology concentrated in homological degree zero, in
particular the module of K\"ahler differentials is trivial.

\subsection{Rognes' Galois extensions of commutative ring spectra}
\begin{defn} \label{def:galois} \, \cite[Definition 4.1.3]{rognes} \,
Let $A \ra B$ be a map of commutative ring spectra and let $G$ be a
finite group acting on $B$ via commutative $A$-algebra maps. Assume
that $S \ra A \ra B$ is a sequence of cofibrations in the model
structure on commutative ring spectra of \cite[Corollary
VII.4.10]{EKMM}. Then $A \ra B$ is a $G$-Galois extension if
\begin{enumerate}
\item
the canonical map $\iota \colon A \ra B^{hG}$ is a weak equivalence
and
\item
\begin{equation} \label{eq:unram} h \colon B \wedge_A B \ra \prod_G
  B \end{equation} is a weak equivalence.
\end{enumerate}
\end{defn}
The first condition is the familiar fixed points condition from
classical Galois theory of fields. The map $\iota$ comes from taking
the adjoint of the map
$$\xymatrix@1{{A \wedge EG_+} \ar[r]^{\id \wedge p} & {A \wedge S^0
  \cong A} \ar[r] & B} $$
where $p \colon EG_+ \ra S^0$ collapses $EG$ to the non-base point of
$S^0$.

The map $h$ is adjoint to the composite
$$ B \wedge_A B \wedge G_+ \ra B \wedge_A B \ra B$$
that comes from the $G$-action on the right factor of $B \wedge_A B$
followed by the multiplication in $B$. (Informally, if smashes were
tensors, then $h(b_1 \otimes b_2) = (b_1\cdot g(b_2))_{g \in G}$.)
Note that $\prod_G B$ is isomorphic to $F(G_+, B)$, so we could rewrite
the condition in \eqref{eq:unram} as the requirement that
$$ h \colon B \wedge_A B \ra F(G_+, B)$$
is a weak equivalence.

The condition that the map $h$ from \eqref{eq:unram} is a
weak equivalence is crucial. It is
also necessary for Galois extensions of
discrete commutative rings in order to ensure that the extension is
unramified. For instance $\Z \subset \Z[i]$ satisfies that
$\Z[i]^{C_2} = \Z$, but $h \colon \Z[i] \otimes_\Z \Z[i] \ra \Z[i]
\times \Z[i]$ is not surjective: $h$ detects the ramification at
the prime $2$. Therefore $\Z \ra \Z[i]$ is \emph{not} a $C_2$-Galois 
extension but $\Z[\frac{1}{2}] \ra \Z[\frac{1}{2},i]$ is $C_2$-Galois.

Galois extensions of commutative ring spectra can have rather bad
properties as modules. So the following definition is actually an
additional assumption (this does not happen in the discrete setting).

\begin{defn} \cite[Definition 4.3.1]{rognes}
A Galois extension $A \ra B$ is \emph{faithful} if it is faithful
as an $A$-module, \ie, for every $A$-module
$M$ with $M \wedge_A B \simeq *$ we have $M \simeq *$.
\end{defn}

Important examples of Galois extensions of commutative ring spectra are the
following. By $C_n$ we denote the cyclic group of order $n$.

\begin{enumerate}
\item
The concept of Galois extensions of commutative ring spectra
corresponds to the one for commutative rings via the
Eilenberg-Mac\,Lane spectrum functor
\cite[Proposition 4.2]{rognes}:
Let $R \ra T$ be a homomorphism of discrete commutative rings and let
$G$ be a finite group acting on $T$ via $R$-algebra
homomorphisms. Then $R \ra T$ is a $G$-Galois extension of commutative
rings if and only if $\HR \ra H\!T$ is a $G$-Galois extension of commutative
ring spectra.
\item
The complexification of real vector bundles gives rise to a map of
commutative ring spectra $\KO \ra \KU$ from real to complex topological
K-theory. There is a $C_2$-action on $\KU$ corresponding to complex
conjugation of complex vector bundles. Rognes shows \cite[Proposition
5.3.1]{rognes} that this turns $\KO \ra \KU$ into a $C_2$-Galois
extension.
\item
At an odd prime $p$ there is a $p$-adic Adams operation on $KU_p$ that gives
rise to a $C_{p-1}$-action on $KU_p$ such that $L_p \ra KU_p$ is a
$C_{p-1}$-Galois extension (see \cite[5.5.4]{rognes}). 
\item
There is a notion of pro-Galois extensions of commutative ring spectra
and $L_{K(n)}S \ra E_n$ is a $K(n)$-local pro-Galois extension with the
extended
Morava stabilizer group as the Galois group \cite[Theorem 5.4.4]{rognes}.
\item
Let $p$ be an arbitrary prime.
The projection map $\pi\colon EC_p \ra BC_p$ induces a map on function spectra
$$ F(\pi_+, H\F_p) \colon F((BC_p)_+, H\F_p) \ra F((EC_p)_+, H\F_p)
\sim H\F_p$$
which identifies $H\F_p$ as a $C_p$-Galois extension over $F((BC_p)_+,
H\F_p)$ \cite[Proposition 5.6.3]{rognes}. Hence in the world of
commutative ring spectra group cohomology sits between $S$ and
$H\F_p$ as the base of a Galois extension! Beware, this Galois extension is not
faithful. This observation is due to Ben Wieland: the Tate construction
$H\F_p^{tC_p}$ isn't trivial and it is  actually killed by the Galois extension
(in the spectral sequence you augment a Laurent generator to zero).
\item
Studying elliptic curves with level structures gives $C_2$-Galois
extensions $\TMF_0(3) \ra \TMF_1(3)$ and $\Tmf_0(3) \ra \Tmf_1(3)$
\cite[Theorems 7.6, 7.12]{mm}. For  $\TMF_1(3)$ (and  $\Tmf_1(3)$) you consider
elliptic  curves  with  one  chosen  point  of  exact
order $3$ and for $\TMF_0(3)$ (and  $\Tmf_0(3)$) you only remember
a subgroup of order $3$. As $C_2 \cong \Z/3\Z^\times$ this gives a
$C_2$-action. This can be made rigorous (see \cite{hm,hl,mm}).

\end{enumerate}

\subsection{Notions of \'etale morphisms}
Weibel-Geller \cite{wg} show that for an \'etale extension of
commutative rings $\varphi\colon A \ra B$ Hochschild homology
satisfies \emph{\'etale descent}: The map $\HH(\varphi)_*$
induces an isomorphism
\begin{equation} \label{eq:edescent-alg}
B \otimes_{A} \HH_*(A) \cong \HH_*(B)
\end{equation}
and for finite $G$-Galois extensions $\varphi\colon A \ra B$ one
obtains \emph{Galois descent}:
\begin{equation} \label{eq:gdescent-alg}
\HH_*(A) \cong \HH_*(B)^G
\end{equation}
It is easy to see that for a $G$-Galois extension of discrete commutative 
rings $\varphi\colon A \ra B$ with finite $G$, the induced extension of 
graded commutative rings $\HH_*(\varphi) \colon \HH_*(A) \ra \HH_*(B)$ is 
again $G$-Galois. In addition to having the right fixed-point property it 
satisfies 
\begin{align*} 
\HH_*(B) \otimes_{\HH_*(A)} \HH_*(B) & \cong B \otimes_{A} \HH_*(A) 
\otimes_{\HH_*(A)} B \otimes_{A} \HH_*(A) \\
& \cong B \otimes_A B \otimes_A \HH_*(A) \\
& \cong \prod_G B \otimes_A \HH_*(A) \\
& \cong \prod_G \HH_*(B).
\end{align*}

If $\varphi \colon A \ra B$ is \'etale, then the module of K\"ahler
differentials $\Omega^1_{B|A}$ is trivial and it can be easily seen
that the map $B \ra \HH^A_*(B)$  is an isomorphism and that
Andr\'e-Quillen homology of $B$ over $A$ is trivial, because
\'etale algebras are smooth.

For commutative ring spectra the situation is different. There are
several non-equivalent notions of \'etale maps:

\begin{defn}
Let $\varphi\colon A \ra B$ be a morphism of commutative ring
spectra.
  \begin{enumerate}
  \item \, \cite[Definition 7.5.1.4]{lurie-HA}
We call $\varphi$
\emph{Lurie-\'etale}, if $\pi_0(\varphi) \colon \pi_0(A) \ra \pi_0(B)$
is an \'etale map of commutative rings and if the canonical map
$$ \pi_*(A) \otimes_{\pi_0(A)} \pi_0(B) \ra \pi_*(B)$$
is an isomorphism.
\item \, \cite[Definiton 3.2]{mm-hkr}, \cite[Definition 9.2.1]{rognes}
The morphism $\varphi$ is \emph{(formally) $\THH$-\'etale}, if $B \ra
\THH^A(B)$ is a weak equivalence.
\item \,  \cite[Definiton 3.2]{mm-hkr}, \cite[Definition 9.4.1]{rognes}
We define $\varphi$ to be \emph{(formally) $\TAQ$-\'etale}, if
$\TAQ(B|A)$ is weakly equivalent to $\ast$.
  \end{enumerate}
\end{defn}
\begin{rem}
\begin{itemize}
\item[]
\item
Rognes \cite{rognes} reserves the labels $\THH$-\'etale and
$\TAQ$-\'etale only for such maps that -- in addition to the
conditions above -- identify $B$ as a dualizable
$A$-module.
\item
The condition of being Lurie-\'etale is strong and is a very algebraic one.
It is for instance not
satisfied by the $C_2$-Galois extension $\KO \ra \KU$ because on the
level of homotopy groups this extension is rather appalling, compare
\eqref{eq:kotoku}.
\item
McCarthy and Minasian show that $\THH$-\'etale implies $\TAQ$-\'etale
and they show that for $n > 1$ the map $H\F_p \ra F(K(\Z/p\Z, n)_+, H\F_p)$ is
a $\TAQ$-\'etale morphism that is not 
$\THH$-\'etale. They attribute this example to Mike Mandell
\cite[Example 3.5]{mm-hkr}. Minasian \cite[Corollary 2.8]{minasian}
proves that both notions are 
equivalent for morphisms between connective commutative ring spectra.
\item
For connective spectra, the notion of Lurie-\'etaleness  has
several good features \cite[\S 7.5]{lurie-HA} and Mathew shows  in
\cite[Corollary 3.1]{Mat} that one can use \cite[Lemma 8.9]{lurie-dagvii}
to show that under some finiteness condition $\TAQ$-\'etaleness implies
Lurie-\'etaleness in the connective case.
\end{itemize}
\end{rem}

\begin{defn} \cite[Definition 9.1.1]{rognes}
Let $C$ be a cofibrant associative $A$-algebra spectrum. Then $C$ is
\emph{separable} if 
the multiplication map $\mu\colon C \wedge_A C^o \ra C$ has a section in
the homotopy category of $C$-bimodule spectra.
\end{defn}

\begin{prop} \, \cite[9.2.6]{rognes} \, 
If $C$ is a commutative separable $A$-algebra spectrum, then $C$ is
$\THH$-\'etale. 
\end{prop}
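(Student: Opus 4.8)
Following \cite[9.2.6]{rognes}, the plan is to show directly that the canonical unit map $\eta\colon C\ra\THH^A(C)=\call^A_{\mathbb{S}^1}(C)$ of commutative $C$-algebras is a weak equivalence. First I would use that $\mathbb{S}^1$ is the homotopy pushout of pointed simplicial sets $\ast\leftarrow\mathbb{S}^0\ra\ast$, together with the fact (compare Lemma~\ref{lem:prod} and the identity $\call^R_{\mathbb{S}^0}(A)=A\wedge_R A$) that for cofibrant $C$ the Loday construction $\call^A_{-}(C)$ sends homotopy pushouts of pointed simplicial sets to homotopy pushouts of commutative $A$-algebras, \ie{} to derived smash products. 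Since $C$ is commutative, $C^o\simeq C$ and the collapse map $\mathbb{S}^0\ra\ast$ induces the multiplication $\mu\colon C\wedge_A C\ra C$; hence one gets an equivalence $\THH^A(C)\simeq C\wedge^{\mathbf{L}}_{C\wedge_A C}C$ of commutative $C$-algebras, under which $\eta$ is the coprojection, \ie{} the map $F(\mu)$ obtained by applying the derived functor $F:=(-)\wedge^{\mathbf{L}}_{C\wedge_A C}C$ to $\mu$, where the last copy of $C$ is regarded as a $C\wedge_A C$-module via $\mu$, so that $F(C\wedge_A C)\simeq C$ and $F(C)\simeq\THH^A(C)$. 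I would also record the augmentation $\pi\colon\THH^A(C)\ra C$ induced by $\mathbb{S}^1\ra\ast$, which satisfies $\pi\eta=\id_C$ by Remark~\ref{rem:augmentationTHH}.

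Next I would feed in separability. By the definition preceding the proposition (\cite[9.1.1]{rognes}), the multiplication map admits a section $\sigma$ in the homotopy category of $C$-bimodules; using $C^o\simeq C$ this says that $\mu\colon C\wedge_A C\ra C$ has a section $\sigma$ in the homotopy category of $C\wedge_A C$-modules, \ie{} $\mu\sigma=\id_C$. Equivalently, the $C\wedge_A C$-module $C$ (with action via $\mu$) is a retract of the free rank-one module $C\wedge_A C$. Applying the derived functor $F$ to the retract datum $C\xrightarrow{\sigma}C\wedge_A C\xrightarrow{\mu}C$ and using $F(\mu)=\eta$, I obtain a $C$-module map $s:=F(\sigma)\colon\THH^A(C)\ra C$ with
\[
\eta\circ s=F(\mu)\circ F(\sigma)=F(\mu\sigma)=F(\id_C)=\id_{\THH^A(C)}.
\]

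Finally I would combine the two one-sided inverses of $\eta$. From $\eta s=\id_{\THH^A(C)}$ and $\pi\eta=\id_C$ it follows that $s=(\pi\eta)\circ s=\pi\circ(\eta s)=\pi$, hence $\eta\pi=\eta s=\id_{\THH^A(C)}$; together with $\pi\eta=\id_C$ this shows that $\eta$ is an isomorphism in the homotopy category with inverse $\pi$, so $\varphi$ is $\THH$-\'etale. (Alternatively one can note that $\sigma\mu$ is an idempotent on $C\wedge_A C$ given by multiplication with an idempotent element of $\pi_0(C\wedge_A C)$, use that $\mu$ carries it to $1$ to split $C\wedge_A C\simeq C\times C'$ with $\mu$ the first projection, and then compute the relative smash product over a product ring.)

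The main obstacle is the bookkeeping of the first paragraph: one must verify that the point-set model $\call^A_{\mathbb{S}^1}(C)$ from \cite{EKMM} genuinely computes $C\wedge^{\mathbf{L}}_{C\wedge_A C}C$ (cofibrancy of $C$ over $A$, cell approximations, and the comparison of the cyclic bar construction with a two-sided bar resolution of $C$ over $C\wedge_A C$), and, crucially, that under this identification the unit map $C\ra\THH^A(C)$ is exactly $F(\mu)$ while the augmentation is exactly $\pi$. This is precisely where commutativity of $C$ enters: for a merely associative $C$ the map $\mu\colon C\wedge_A C^o\ra C$ is not a ring map, the identification $\eta=F(\mu)$ fails, and the conclusion itself is false (\eg{} $C=M_n(A)$ is separable, yet $\THH^A(M_n(A))\simeq\THH^A(A)\not\simeq M_n(A)$). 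A minor point is that $\sigma$ is only given in the homotopy category of $C$-bimodules, but this causes no trouble since $F$ is already defined as a functor between homotopy categories.
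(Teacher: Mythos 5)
Your proof is correct and follows essentially the same route as the paper: both apply the derived functor $C\wedge^{L}_{C\wedge_A C}(-)$ to the separability retraction $C\xrightarrow{\sigma}C\wedge_A C\xrightarrow{\mu}C$ to get a right inverse for the unit $C\to\THH^A(C)$, and both use the augmentation from Remark~\ref{rem:augmentationTHH} as the left inverse. You merely spell out more explicitly the identification $\THH^A(C)\simeq C\wedge^{L}_{C\wedge_A C}C$ and the final two-sided-inverse bookkeeping, which the paper leaves implicit.
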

\begin{proof}
Recall from Remark \ref{rem:augmentationTHH} that $\THH^A(C)$ is an augmented
commutative $C$-algebra spectrum, so the composite of the unit map $C
\ra \THH^A(C)$ with the augmentation
$$ C \ra \THH^A(C) \ra C$$
is the identity. In addition, we get a splitting in the homotopy
category of $C$-bimodule spectra,
$$ \xymatrix@1{{C} \ar[r]^(0.4)s & {C \wedge_A C} \ar[r]^\mu & {C,}} $$
\ie, the above composite is the identity on $C$. Taking the derived smash
product $C \wedge^L_{C \wedge_A C} (-)$ of the above sequence gives the
sequence
$$ \THH^A(C) \ra C \ra \THH^A(C)$$
in which the last map is equivalent to the unit map of $\THH^A(C)$ and
whose composite is the identity. So the unit map $C \ra \THH^A(C)$ has a
right and a left inverse in the homotopy category of $C$-module spectra.
\end{proof}

\begin{defn}
Let $A \ra B$ be a map of commutative ring spectra and let $G$ be a 
finite group acting on $B$ via maps of commutative $A$-algebra spectra. Assume
that $S \ra A \ra B$ is a sequence of cofibrations in the model
structure on commutative ring spectra of \cite[Corollary
VII.4.10]{EKMM}. Then $A \ra B$ is \emph{unramified} if
$$ h \colon B \wedge_A B \ra \prod_G
  B $$
is a weak equivalence.
\end{defn}

\begin{prop} \, (compare \cite[Lemma 9.1.2]{rognes}) \,
If $A \ra B$ is unramified, then $B$ is separable over $A$.
\end{prop}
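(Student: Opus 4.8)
The plan is to build the required $B$-bimodule section of $\mu\colon B\wedge_A B=B\wedge_A B^{o}\to B$ directly out of the weak equivalence $h$ supplied by the unramifiedness hypothesis, together with the evident splitting of the projection $\prod_G B\to B$ onto the factor indexed by the neutral element $e\in G$. First I would fix the relevant bimodule structures. The source $B\wedge_A B$ carries its standard $B$-bimodule structure, with $B$ acting through the left, resp.\ right, smash factor. I equip $\prod_G B\cong F(G_+,B)$ with the structure under which $B$ acts diagonally on the left and acts on the right by the ``twisted'' rule, in which the factor indexed by $g$ is acted on through the automorphism $g$ of $B$; informally $((c_h)_h)\cdot b=(c_h\,h(b))_h$. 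With these conventions the map $h$, which is adjoint to the composite $B\wedge_A B\wedge G_+\to B\wedge_A B\to B$ (the $G$-action on the right smash factor followed by the multiplication on $B$), is a map of $B$-bimodules: tracing the formula $h(b_1\otimes b_2)=(b_1\,g(b_2))_g$ shows compatibility with the left action on the nose, and compatibility with the twisted right action because the right $B$-action on $B\wedge_A B$ twists the right factor before multiplying. Since $h$ is moreover a weak equivalence, it becomes an isomorphism in the homotopy category of $B$-bimodule spectra, so $h^{-1}$ is available there.

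Next I would recover $\mu$ from $h$. The component of $h(b_1\otimes b_2)$ at $g=e$ is $b_1\cdot e(b_2)=b_1b_2$, so $\mathrm{pr}_e\circ h=\mu$ as maps of $B$-bimodules, where $\mathrm{pr}_e\colon\prod_G B\to B$ is projection onto the $e$-indexed factor; this $\mathrm{pr}_e$ is itself a bimodule map, since the diagonal left action and the twisted right action both restrict correctly on the $e$-component (on which $e$ acts as $\mathrm{id}_B$). Then I produce a bimodule section of $\mathrm{pr}_e$. As $G$ is finite and spectra form a stable category, $\prod_G B$ agrees with $\bigvee_G B$, and the inclusion $\iota_e\colon B\to\prod_G B$ of the $e$-indexed summand (equivalently $F(r,B)$ for the pointed retraction $r\colon G_+\to\{e\}_+$ collapsing $G\setminus\{e\}$) satisfies $\mathrm{pr}_e\circ\iota_e=\mathrm{id}_B$. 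One checks directly that $\iota_e$ is a $B$-bimodule map for the above structures: compatibility with the diagonal left action is immediate, and for the twisted right action the summands indexed by $g\neq e$ are hit by the zero map both before and after applying the action, while on the $e$-summand the twist by $e$ is trivial.

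Finally I set $\sigma:=h^{-1}\circ\iota_e$, a morphism in the homotopy category of $B$-bimodule spectra, and compute $\mu\circ\sigma=(\mathrm{pr}_e\circ h)\circ(h^{-1}\circ\iota_e)=\mathrm{pr}_e\circ\iota_e=\mathrm{id}_B$ in that homotopy category. Hence $\mu\colon B\wedge_A B^{o}\to B$ admits a section as a $B$-bimodule map, so $B$ is separable over $A$. The only genuinely delicate point in carrying this out is the bookkeeping around the twisted right $B$-module structure on $\prod_G B\cong F(G_+,B)$: it must be pinned down precisely enough that both $h$ and $\iota_e$ are honest maps of $B$-bimodules. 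Once those two structures are fixed, everything else is formal and uses only that $h$ is a weak equivalence and that $G$ is finite.
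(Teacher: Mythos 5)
Your construction of the section as $h^{-1}\circ\iota_e$ is the same one the paper uses, so the basic approach is identical and correct. Where you go further than the paper is in the bimodule bookkeeping: the paper simply remarks that ``$h$ is not a $B$-bimodule map, but we are only interested in its $e$-component of $F(G_+,B)$'' and stops there, leaving unverified that the resulting composite really is a $B$-bimodule section of $\mu$. You close that gap cleanly by equipping $F(G_+,B)\cong\prod_G B$ with the right $B$-action twisted through the $G$-action (the $g$-factor acted on via the automorphism $g$, which makes sense because $G$ acts on $B$ by $A$-algebra maps), under which $h$ becomes an equivalence of $B$-bimodules and $\iota_e$ and $\mathrm{pr}_e$ are bimodule maps since $e$ acts trivially -- so $h^{-1}\circ\iota_e$ is an honest $B$-bimodule section and the argument is airtight.
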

\begin{proof}
The canonical inclusion map $i \colon B \ra F(G_+, B)$ can be modelled by the
pointed map from $G_+$ to $S^0$, that sends the neutral element $e \in G$ to
the non-basepoint of $S^0$ and sends all other elements to the basepoint. We
define a section to the multiplication map of $B$ to be 
$$ \xymatrix@1{{B} \ar[r]^(0.4)i & {F(G_+, B)} & {B \wedge_A B.}
\ar[l]_{h, \sim}}$$
Note that $h$ is not a $B$-bimodule map, but we are only interested in its
$e$-component of $F(G_+, B)$.
\end{proof}
Thus we can conclude that unramified maps of commutative ring spectra
are $\THH$-\'etale and that the failure of the map $B \ra \THH^A(B)$
to being a weak equivalence detects ramification. This idea was
exploited in \cite{dlr2} in order to identify for instance $ko \ra ku$
as a wildly ramified extension whereas the inclusion of the Adams summand
$\ell \ra ku_{(p)}$ is tamely ramified \cite[Theorems 4.1, 5.2]{dlr2}. Sagave 
also identified this map as being log-\'etale \cite[Theorem 1.6]{sagave}. 

\subsection{Versions of \'etale descent}
Transferring the Geller-Weibel result to the setting of commutative
ring spectra, it seems natural to define two versions of descent:

\begin{defn}
In the following $\varphi\colon A \ra B$ is a cofibration and $A$ is
cofibrant.
  \begin{itemize}
  \item
The morphism $\varphi \colon A \ra B$ satisfies \emph{\'etale descent}
if the canonical morphism
\begin{equation} \label{eq:etaledescent}
B \wedge_A \THH(A) \ra \THH(B)
\end{equation}
is a weak equivalence.
\item
If $\varphi \colon A \ra B$ is a map of commutative ring spectra and
if $G$ is a
finite group acting on $B$ via commutative $A$-algebra maps, then we
say that $\varphi$ satisfies \emph{Galois descent} if the map
\begin{equation} \label{eq:galoisdescent}
\THH(A) \ra \THH(B)^{hG}
\end{equation}
is a weak equivalence.
\end{itemize}
\end{defn}
Akhil Mathew clarifies the relationship between the different notions
of \'etale morphisms and the notions of descent. He
proves that Lurie-\'etale morphisms satisfy \'etale descent
\cite[Theorem 1.3]{Mat} and he shows that for a faithful $G$-Galois
extension with finite Galois group $G$, both descent properties are
equivalent \cite[Proposition 4.3]{Mat} and they are in turn equivalent
to the property that $\THH(A) \ra \THH(B)$ is again a $G$-Galois extension.

Moreover, he shows that the morphism
$$ \varphi \colon F(\mathbb{S}^1_+, H\F_p) \ra F(\mathbb{S}^1_+,
H\F_p)$$ that is induced by the degree-$p$ map on $\mathbb{S}^1$ is a
faithful $C_p$-Galois extension, but that it does \emph{not} satisfy \'etale
descent \cite[Theorem 2.1]{Mat} and hence it doesn't satisfy Galois descent.

The Hopf fibration $\mathbb{S}^1 \ra \mathbb{S}^3 \ra \mathbb{S}^2$ is
a principal $\mathbb{S}^1$-bundle. The corresponding morphism of 
commutative $H\Q$-algebra spectra of cochains
$$ F(\eta, H\Q) \colon  F(\mathbb{S}^2_+, H\Q) \ra F(\mathbb{S}^3_+, H\Q)$$
is therefore an $\mathbb{S}^1$-Galois extension \cite[Proposition
5.6.3]{rognes}.

In joint work with Christian Ausoni we show that the morphism $F(\eta,
H\Q)$  does  not satisfy Galois descent, \ie, 
$$ \THH(F(\mathbb{S}^2_+, H\Q)) \nsim \THH(F(\mathbb{S}^3_+, 
H\Q))^{h\mathbb{S}^1}:$$ 
The homotopy groups of $\THH(F(\mathbb{S}^2_+, H\Q))$ contain an 
element in degree $-1$ that is not present in
$\pi_*(\THH(F(\mathbb{S}^3_+, 
H\Q))^{h\mathbb{S}^1})$. %%%might not be faithful

Mathew identifies the problem with \'etale descent of finite faithful Galois
extensions for $\THH$ as being
caused by the non-trivial fundamental group of $\mathbb{S}^1$. He
shows the following result.
\begin{thm} \, \cite[Proposition 5.2]{Mat} \, 
Let $X$ be a simply connected pointed space and let $A \ra B$ be a
faithful $G$-Galois extension of commutative ring spectra with finite
$G$. Then the map
$$ B \wedge_A (X \otimes A) \ra X \otimes B$$
is a weak equivalence.
\end{thm}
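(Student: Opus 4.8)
The plan is to isolate the single place where simple connectivity is used and reduce everything else to formal properties of the two functors involved and to one vanishing statement about a cotangent complex. First I would record that $X\mapsto X\otimes R$ sends the point to $R$ and sends homotopy pushouts of pointed spaces to homotopy pushouts of commutative $R$-algebras, i.e.\ to relative smash products, and that $B\wedge_A(-)\colon\C_A\ra\C_B$, being a left adjoint, preserves all homotopy colimits. Hence the comparison map $c_X\colon B\wedge_A(X\otimes A)\ra X\otimes B$ is natural in $X$, is an equivalence for $X=\ast$ (both sides are $B$), is compatible with the homotopy-pushout decompositions coming from CW-attachments, and (both functors preserving sequential homotopy colimits) may be checked on finite $X$. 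Choosing a CW model of $X$ with a single $0$-cell and no $1$-cells — possible precisely because $X$ is simply connected — an induction over the skeleta, using that attaching a wedge of $n$-cells ($n\geq 2$) realizes $X^{(n)}\otimes R\simeq (X^{(n-1)}\otimes R)\wedge^L_{(\bigvee S^{n-1})\otimes R}R$, reduces the theorem to the case of spheres $S^{m}$ with $m\geq 2$. Since $S^{m}=\Sigma S^{m-1}$ is the homotopy pushout $\ast\leftarrow S^{m-1}\ra\ast$ and $B\wedge_A(-)$ commutes with the suspension $R\wedge^L_{(-)}R$ of augmented $E_\infty$-$R$-algebras, $c_{S^m}$ is obtained from $c_{S^{m-1}}$ by applying this suspension; so only $X=S^2$ is not formal, and there one must show that the map $B\wedge^L_{B\wedge_A\THH(A)}B\ra B\wedge^L_{\THH(B)}B$ induced by $B\wedge_A\THH(A)\ra\THH(B)$ is an equivalence, \emph{despite} the latter map not being an equivalence in general (this is Mathew's failure of \'etale descent in \cite[Theorem 2.1]{Mat}).

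The key input is that a faithful $G$-Galois extension is $\TAQ$-\'etale. Indeed, by condition (b) of Definition \ref{def:galois} the extension $\varphi$ is unramified, hence separable by the proposition above, hence formally $\THH$-\'etale by \cite[9.2.6]{rognes}, hence formally $\TAQ$-\'etale by McCarthy--Minasian; so $\TAQ(B|A)\simeq\ast$, i.e.\ the relative cotangent complex $\Omega_{B|A}$ vanishes, and therefore the transitivity cofibre sequence $B\wedge_A\Omega_{A|S}\ra\Omega_{B|S}\ra\Omega_{B|A}$ exhibits $B\wedge_A\Omega_{A|S}\xrightarrow{\ \sim\ }\Omega_{B|S}$. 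To feed this into the simply connected case I would filter $X\otimes R$ by the Andr\'e--Quillen (Goodwillie) tower of the augmented $E_\infty$-$R$-algebra $X\otimes R$, equivalently by Kuhn's weight filtration of the functor $(-)\otimes R$ from \cite{kuhn}: its layers are built functorially, out of the absolute cotangent complex $\Omega_{R|S}$, using finite $R$-smash powers, homotopy $\Sigma_j$-orbits, and smashing with $\Sigma^{\infty}\widetilde{X}^{\wedge j}$. This construction is compatible with base change $B\wedge_A(-)$ because base change is symmetric monoidal and commutes with colimits and with the cotangent complex up to the transitivity sequence; combined with the equivalence $B\wedge_A\Omega_{A|S}\xrightarrow{\ \sim\ }\Omega_{B|S}$ this shows that $c_X$ is an equivalence on every layer. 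It then remains only to know that the tower converges, and this is where simple connectivity is essential: the $j$-th layer gains connectivity linearly in $j$ from the factor $\Sigma^{\infty}\widetilde{X}^{\wedge j}$ with $\widetilde X$ one-connected, so the tower converges and $c_X$ is an equivalence. (For $X=S^1$ the linear gain disappears, the tower fails to converge, and the statement genuinely fails — matching the counterexamples quoted above.)

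The step I expect to be the real obstacle is exactly this convergence, and specifically making it work with \emph{no} boundedness hypothesis on $A$ and $B$: the motivating example $\KO\ra\KU$, as well as the group-cohomology-type Galois extensions, are not connective, so $\Omega_{R|S}$ need not be bounded below and the naive linear-connectivity estimate does not force convergence. Here I would use the hypotheses not yet exploited: faithfulness, which by Rognes' theory forces $B$ to be dualizable over $A$, and the splitting $B\wedge_A B\simeq\prod_G B$. After base change to $B$ the relevant cotangent-complex inputs become induced-up $B$-modules and the weight filtration of $X\otimes B$ is manifestly complete; one then has to transport completeness of the filtration on $X\otimes A$ along the faithful, conservative functor $B\wedge_A(-)$ (untwisting the resulting semilinear $G$-action), which amounts to proving that for simply connected $X$ the Andr\'e--Quillen filtration of $X\otimes R$ is complete whenever $R$ is dualizable over a faithful subextension. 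Packaging this completeness statement — everything else being the formal reduction sketched above — is the technical heart of the argument.
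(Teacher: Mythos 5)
The paper itself gives no proof of this theorem — it simply cites Mathew — so your attempt has to stand on its own. The formal reductions you set up (to finite $X$, then along the skeletal filtration of a CW model with no $1$-cells, then by suspension to $S^2$) are sound, and you correctly identify the genuine homological input: a faithful $G$-Galois extension is unramified, hence separable, hence formally $\THH$-\'etale and then $\TAQ$-\'etale by McCarthy--Minasian, so $\Omega_{B|A}\simeq *$ and the transitivity sequence gives $B\wedge_A\Omega_{A|S}\xrightarrow{\ \sim\ }\Omega_{B|S}$. But the argument you build on that input has a real gap, in fact two.

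First, your explanation for why $S^1$ should fail is wrong. You write that for $X=S^1$ ``the linear gain disappears,'' but $\widetilde{S^1}^{\wedge j}=S^j$ is still $(j-1)$-connected, so the layers of the Kuhn/Goodwillie tower gain connectivity linearly in $j$ exactly as they do for $S^2$ (just with slope $1$ rather than $2$). If your argument were correct, the same layer comparison plus convergence would therefore prove $c_{S^1}$ for connective $A,B$ — so the simple-connectivity hypothesis does not enter where you say it does, and the failure of $\THH$ descent must be detected by something other than tower convergence. This means the part of the proof that is supposed to carry the hypothesis of the theorem is misdiagnosed. Second, and more decisively, the argument breaks down for the non-connective examples that are precisely the point of the theorem ($\KO\ra\KU$, the group-cohomology Galois extensions, $L_{K(n)}S\ra E_n$): there $\Omega_{R|S}$ is unbounded below, the tower does not converge, and comparing layers tells you nothing about the comparison map itself. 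You acknowledge this and gesture at a fix — ``after base change to $B$ the weight filtration of $X\otimes B$ is manifestly complete,'' then transport completeness back along the conservative functor $B\wedge_A(-)$ — but the first claim is unjustified (why should smashing with a non-connective $B$ over a non-connective $A$ produce a complete filtration?) and, as you yourself note, ``packaging this completeness statement\ldots is the technical heart of the argument.'' In other words the proposal stops exactly where the proof would have to begin. Until that completeness step is actually proved, and until the role of $\pi_1(X)=1$ is correctly located (it is not a connectivity estimate on $\widetilde X^{\wedge j}$), this is a plausible reduction scheme rather than a proof.
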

In particular, higher order topological Hochschild homology,
$\THH^{[n]}$  for $n \geq 2$, \emph{does} satisfy \'etale descent for
faithful finite Galois extensions. However, \'etale descent remains
for instance an issue for torus homology.

Note that sometimes $\THH$ \emph{does} satisfy descent, even for ramified 
maps of commutative ring spectra. For instance, Christian Ausoni shows in 
\cite[Theorem 10.2]{ausoni} that $\THH(\ell_p)$ is $p$-adically equivalent 
to $\THH(ku_p)^{hC_{p-1}}$ and even that $K(\ell_p)$ is $p$-adically 
equivalent to $K(ku_p)^{hC_{p-1}}$. 

\begin{rem}
In \cite{cmnn} Clausen, Mathew, Naumann and Noel prove far-reaching 
Galois descent results for topological
Hochschild homology and algebraic K-theory; in particular they confirm
a Galois descent conjecture for algebraic K-theory by Ausoni and Rognes
in many important cases.
They identify $\THH$ as a \emph{weakly additive invariant} 
(see \cite[Definition 3.11]{cmnn})  and prove descent in the form of
\cite[Theorems 5.1,5.6]{cmnn}.
\end{rem}
\section{Picard and Brauer groups} \label{sec:pic}
\subsection{Picard groups in the setting of a symmetric monoidal category}
Let $(\C, \otimes, 1, \tau)$ be a symmetric monoidal category. An important
class of objects in $\C$ are those objects $C$ that have an inverse with
respect to $\otimes$, \ie, there is an object $C'$ of $\C$ such that
$$ C \otimes C' \cong 1.$$

One wants to gather such objects in a category and build a space and
spectrum out of them:
\begin{defn}
The \emph{Picard groupoid of $\C$}, $\Picard(\C)$, is the category whose
objects are the invertible objects of $\C$ and whose morphisms are
isomorphisms between invertible objects.
\end{defn}
In general, this category might not be small.
Note that if $C_1$ and $C_2$ are objects of $\Picard(\C)$, then so is
$C_1 \otimes C_2$; in fact, $\Picard(\C)$ is itself a symmetric monoidal
category.
\begin{defn}
Let $\C$ be as above and assume that $\Picard(\C)$ is small.

Then $\PIC(\C)$ is the classifying space of the symmetric
monoidal category $\Picard(\C)$ and let $\pic(\C)$ denote the connective
spectrum associated to the infinite loop space associated to $\PIC(\C)$.

The \emph{Picard group of $\C$}, $\Pic(\C)$, is $\pi_0\PIC(\C)$.
\end{defn}
If the Picard groupoid of $\C$ is small, then the Picard group can also be
described as the set of isomorphism classes
of invertible objects of $\C$ with the product
$$ [C_1] \otimes [C_2] := [C_1 \otimes C_2].$$
The neutral element is the isomorphism class of the unit, $[1]$.

\begin{defn}
Let $R$ be a (discrete) commutative ring, then we denote by $\Pic(R)$
the Picard group of the symmetric monoidal category of the category
of $R$-modules and
by $\PIC(R)$ (and $\pic(R)$) the Picard space (and Picard spectrum) of this
category.
\end{defn}
For instance the Picard group of a ring of integers in a number
ring is its ideal class group.

\subsection{Picard group for commutative ring spectra}
For commutative ring spectra $R$, the above definition of $\PIC(R)$ and
$\pic(R)$ would either be much too rigid (if one would choose $\C$ to be the
category of $R$-module spectra and isomorphisms) or not strict enough 
(if one would take $\C$ to be the homotopy category of $R$-module spectra).
See \cite[\S 2]{ms-picard} for an adequate background for a suitable
definition and see
\cite[\S 4]{g-lawson} for a dictionary how to pass from a commutative ring
spectrum
$R$ and its category of modules to the $\infty$-categorical setting. Instead
of working with symmetric monoidal categories, one uses presentable symmetric
monoidal $\infty$-categories $\C$. Then the Picard $\infty$-groupoid of $\C$
is the maximal subgroupoid of the underlying $\infty$-category of $\C$ spanned
by the invertible objects. This groupoid is equivalent to a grouplike
$E_\infty$-space $\PIC(\C)$  and hence there is a
connective ring spectrum, $\pic(\C)$, associated to $\C$ \cite[\S 5]{g-lawson}.

Let $R$ be a commutative ring spectrum.
The operadic nerve of the category of
cofibrant-fibrant $R$-modules is a stable presentable symmetric monoidal
$\infty$-category \cite[Proposition 4.1.3.10]{lurie-HA} and we will
abbreviate this as the $\infty$-category of $R$-modules, $R\text{mod}$.

\begin{defn}
The \emph{Picard group of a commutative ring spectrum $R$, $\Pic(R)$}, is the 
group $\pi_0(\PIC(R\text{mod}))$.
\end{defn}
Again, these Picard groups can also be described as the set of isomorphism
classes of invertible $R$-modules in the homotopy category of $R$-module
spectra.

The Picard space $\PIC(R)$ is a delooping of the units of $R$ 
(\cite[\S 2.2]{ms-picard}, \cite[\S 5]{szymik}): There is an equivalence  
$$ \PIC(R) \simeq \Pic(R) \times BGL_1(R).$$ 

\begin{rem}
There is a map $\Pic(\pi_*R) \ra \Pic(R)$ that realizes an element in the
algebraic Picard group of invertible graded $\pi_*R$-modules as a module
over $R$ and in many cases this map is actually an isomorphism
\cite[Theorem 43]{br-picard}. In this case we call $\Pic(R)$
\emph{algebraic}.  A notable exception comes from Galois extensions of ring
spectra: As in algebra, if $A \ra B$ is a
$G$-Galois extension of commutative ring spectra with abelian Galois group
$G$, then $[B] \in \Pic(A[G])$ \cite[Proposition 6.5.2]{rognes}. But for
instance $[KU_*]$ is certainly \emph{not} an element in the algebraic Picard
group $\Pic(KO_*[C_2])$, see \eqref{eq:kotoku}.
\end{rem}
The equivalence classes of suspensions of $R$ are always in $\Pic(R)$, but if
$R$ is periodic, these suspensions don't generate a free abelian group.
Let us mention some crucial examples of Picard groups of commutative ring
spectra:
\begin{itemize}
\item
The Picard group of the initial commutative ring spectrum $S$ is
$\Pic(S) \cong \Z$ where $n \in \Z$ corresponds to the class of $S^n$
\cite{hms}.
\item
For connective commutative ring spectra the Picard group of $R$ is
algebraic \cite[Theorem 21]{br-picard}, \cite[Theorem
2.4.4]{ms-picard}. 
\item
For periodic real and complex K-theory the Picard groups just notices the
suspensions of the ground ring: The Picard group of $KU$ is
algebraic: $\Pic(\KU) \cong \Z/2\Z$, and
$\Pic(\KO) \cong \Z/8\Z$ (Hopkins, \cite[Example 7.1.1]{ms-picard} and
\cite[\S 7]{g-lawson}).
\item
The same applies to the periodic verison of the spectrum of topological
modular forms: $\Pic(\TMF) \cong \Z/576\Z$ \cite[Theorem A]{ms-picard}. But for
$\Tmf$, the spectrum of
topological forms that mediates between $\TMF$ and its connective version
$\tmf$ one gets \cite[Theorem B]{ms-picard}
$$ \Pic(\Tmf) \cong \Z \oplus \Z/24\Z$$
where the copy of the integers comes from the suspensions of $\Tmf$ and the
generator of the $\Z/24\Z$-summand is described in
\cite[Construction 8.4.2]{ms-picard}
\item
For any odd prime and any finite subgroup $G$ of the full Morava stabilizer
group $\mathbb{G}_{p-1}$ Heard, Mathew and Stojanoska
\cite[Theorem 1.5]{hms-picard} prove -- using Galois
descent techniques for $\pic$ -- that the Picard group of $E_{p-1}^{hG}$ is
a cyclic group generated by the suspension of $E_{p-1}^{hG}$.
\end{itemize}
A Picard group that contains more elements than just the ones coming from
suspensions of the commutative ring spectrum says that there are more
self-equivalences of the homotopy category of $R$-modules than the standard
suspensions. One might view this as twisted suspensions. Gepner and Lawson
explore the concept of having a Picard-grading on the category of $R$-module 
spectra and they develop a Pic-resolution model category structure in the sense 
of Bousfield \cite[\S 3.2]{g-lawson}.

\subsection{Descent method and local versions}
A crucial method for calculating Picard groups is Galois descent. If
$A \ra B$ is a $G$-Galois extension (for $G$ finite), then for the Picard
spectra and spaces the following equivalences hold \cite{g-lawson,ms-picard}:
\begin{equation} \label{eq:picdescent}
\pic(A) \simeq \tau_{\geq 0}\pic(B)^{hG} \text{ and } \PIC(A) \simeq
\PIC(B)^{hG}.
\end{equation}
Here, $\tau_{\geq 0}$ denotes the connective cover of a spectrum. In general,
the extension $B$ is easier to understand than $A$, for instance in the case
of the $C_2$-Galois extension $\KO \ra \KU$,  one obtains information about
 $\pic(A)$  using the homotopy fixed point spectral sequence
$$ H^{-s}(G; \pi_t\pic(B)) \Rightarrow \pi_{t-s}(\pic(B)^{hG}).$$

In \cite[\S 6]{hm}  for instance, Hill and Meier use
Galois descent to determine the Picard groups of $\TMF_0(3)$ and
$\Tmf_0(3)$: 
\begin{thm} \, \cite[Theorems 6.9, 6.12]{hm} \, 
$$\Pic(\TMF_0(3)) \cong \Z/48\Z \text{ and } \Pic(\Tmf_0(3)) \cong \Z
\oplus \Z/8\Z.$$
\end{thm}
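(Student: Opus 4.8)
The plan is to prove both isomorphisms by Galois descent along the faithful $C_2$-Galois extensions $\TMF_0(3)\ra\TMF_1(3)$ and $\Tmf_0(3)\ra\Tmf_1(3)$ of \cite[Theorems 7.6, 7.12]{mm}, using the descent equivalences for Picard spectra from \eqref{eq:picdescent}:
$$ \pic(\TMF_0(3)) \simeq \tau_{\geq 0}\pic(\TMF_1(3))^{hC_2}, \qquad \pic(\Tmf_0(3)) \simeq \tau_{\geq 0}\pic(\Tmf_1(3))^{hC_2}. $$
In each case the Picard group is $\pi_0$ of the homotopy fixed point spectrum, so I would run the homotopy fixed point spectral sequence
$$ E_2^{s,t} = H^s\bigl(C_2;\,\pi_t\pic(\TMF_1(3))\bigr) \;\Longrightarrow\; \pi_{t-s}\pic(\TMF_1(3))^{hC_2} $$
(and its $\Tmf$-analogue) and read off $\pi_0$.

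First I would assemble the input at the level of $\TMF_1(3)$ and $\Tmf_1(3)$. Their homotopy groups are explicit even rings of modular forms ($\Delta$ inverted in the periodic case, not in the $\Tmf$-case), $\TMF_1(3)$ is Landweber exact and complex orientable, and $\Tmf_1(3)$ is connective, so by \cite[Theorem 43]{br-picard} and \cite[Theorem 21]{br-picard} both Picard groups are algebraic, generated by the suspension class, and directly computable from $\pi_*$. Hence $\pi_t\pic(\TMF_1(3))$ is $\Pic(\TMF_1(3))$ for $t=0$, the unit group $(\pi_0\TMF_1(3))^\times$ for $t=1$, and $\pi_{t-1}\TMF_1(3)$ for $t\ge 2$; I would also record the $C_2=(\Z/3)^\times$-action on all of these, which is the moduli action (a chosen point of exact order $3$ versus a chosen order-$3$ subgroup) and is explicit on the modular generators.

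Next I would run the spectral sequence. Above the Picard line, \ie, for $t\ge 2$, the $E_2$-page is the group cohomology $H^s(C_2;\pi_{t-1}\TMF_1(3))$ that already appears in the Galois descent spectral sequence computing $\pi_*\TMF_0(3)$, and its differentials there agree with the ones in the Picard spectral sequence by the standard comparison between the two. This leaves only the finitely many classes in total degree $0$: the columns $H^0(C_2;\Pic(\TMF_1(3)))$, $H^1(C_2;(\pi_0\TMF_1(3))^\times)$, and the diagonal terms $H^s(C_2;\pi_{s-1}\TMF_1(3))$ for $s\ge 2$. The one genuinely new phenomenon is the exotic differential entering or leaving total degree $0$, which is invisible to the ordinary homotopy fixed point spectral sequence; I would pin it down by comparison with Hopkins' computation of $\Pic(\KU)$ and the $\Pic(\TMF)$-computation of \cite{ms-picard}, or by a universal-example/Tate spectral sequence argument. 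After resolving the resulting multiplicative extensions in the associated graded of $\Pic(\TMF_0(3))$, the contributions assemble to $\Z/48\Z$.

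For $\Pic(\Tmf_0(3))$ the same descent applies with $\Tmf_1(3)$ connective, so $\tau_{\geq 0}$ is harmless in degree $0$. The suspensions $\Sigma^n\Tmf_0(3)$ are pairwise inequivalent since $\Tmf_0(3)$ is not periodic, so they contribute a free $\Z$ which splits off (\eg\ rationally, or via the forgetful map to $\Pic$ of the underlying spectrum), and only the torsion is computed by the spectral sequence. Carrying out the same analysis --- with the differentials fixed by the $\TMF_0(3)$-case, one exotic differential, and a hidden $2$-extension --- leaves the cyclic group $\Z/8\Z$, whence $\Pic(\Tmf_0(3))\cong\Z\oplus\Z/8\Z$. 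The hard part throughout is precisely the location of the exotic differential and the hidden extensions near total degree $0$ --- the information not controlled by the classical descent spectral sequence --- together with making the $C_2$-action on the modular generators, hence on all the groups $H^*(C_2;-)$, completely explicit.
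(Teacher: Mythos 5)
The paper gives no proof of this statement; it is a survey citation of Hill--Meier \cite{hm}, noting only that they obtain the result by Galois descent. Your proposal correctly reconstructs the strategy of \cite{hm}: descend along the $C_2$-Galois extensions $\TMF_0(3)\ra\TMF_1(3)$ and $\Tmf_0(3)\ra\Tmf_1(3)$ of \cite{mm} using the $\pic$-descent equivalences of \cite{ms-picard,g-lawson}, run the homotopy fixed point spectral sequence for $\pic$, import the differentials above the Picard line from the ordinary descent spectral sequence for $\pi_*$, and pin down by hand the remaining exotic differentials and extensions near total degree zero (for the $\Tmf$-version, also split off the free $\Z$ of suspensions). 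So the approach matches what the paper attributes to \cite{hm}.

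Two caveats on your inputs. First, the appeal to \cite[Theorem 43]{br-picard} for the algebraicity of $\Pic(\TMF_1(3))$ is not quite right: the Baker--Richter criterion has hypotheses that are not obviously met by $\TMF_1(3)$. The relevant algebraicity statement is the one for weakly even periodic, Landweber exact $E_\infty$-rings with regular Noetherian $\pi_0$, due to Mathew--Stojanoska \cite{ms-picard}, which is what Hill--Meier invoke. Second, since $\pi_*\TMF_1(3)\cong\Z[1/3][a_1,a_3][\Delta^{-1}]$ contains the unit $\Delta\in\pi_{24}$, the suspension class $[\Sigma\TMF_1(3)]$ has order exactly $24$ and $\Pic(\TMF_1(3))\cong\Z/24$. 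The extra factor of $2$ in $\Pic(\TMF_0(3))\cong\Z/48$ is created by the descent spectral sequence (a contribution in filtration $\geq 1$, e.g.\ from $H^1\bigl(C_2;(\pi_0\TMF_1(3))^\times\bigr)$ or a diagonal $H^s(C_2;\pi_{s-1}\TMF_1(3))$ term), not already present at the level of $\TMF_1(3)$. Being precise about this is where the work in \cite{hm} actually lies, so "generated by the suspension class" should be reserved for $\TMF_1(3)$ and not tacitly carried over to $\TMF_0(3)$.
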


Hopkins-Mahowald-Sadofsky started the investigation of the Picard groups of
the $K(n)$-local homotopy categories for varying $n$ \cite{hms}. They
denote these Picard groups by $\Pic_n$. Note that the relevant symmetric
monoidal product for fixed $n$ is 
$$ X \otimes Y = L_{K(n)}(X \wedge Y)$$
for $K(n)$-local $X$ and $Y$. They determined $\Pic_1$ for all primes $p$:
\begin{thm} \, \cite[Theorem 3.3, Proposition 2.7]{hms} \, 
\begin{itemize}
\item
At the prime $2$: $\Pic_1 \cong \Z_2^\times \times \Z/4\Z$.
\item
For all odd primes $p$, $\Pic_1 \cong \Z_p \times \Z/q\Z$ with $q=2p-2$.
\end{itemize}
\end{thm}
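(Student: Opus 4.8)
The plan is to compute $\Pic_1$, the Picard group of the $K(1)$-local stable homotopy category, as $\pi_0\pic(S_{K(1)})$ by Galois descent along the height-$1$ Morava stabilizer group. Write $S_{K(1)}=L_{K(1)}S$ and let $\Gamma=\Z_p^\times$ be the (extended) Morava stabilizer group at height $1$, acting on $p$-complete $K$-theory $E_1=KU_p$ through the $p$-adic Adams operations $\psi^a$. The extension $S_{K(1)}\ra KU_p$ is $K(1)$-locally pro-Galois with group $\Gamma$ (the height-$1$ instance of \cite[Theorem 5.4.4]{rognes}), and because $\pic(-)$ satisfies Galois/profinite descent --- the mechanism behind \eqref{eq:picdescent}, developed in \cite{g-lawson,ms-picard} and extended to the profinite $K(n)$-local setting in \cite{hms-picard} --- there is an equivalence $\pic(S_{K(1)})\simeq\tau_{\geq 0}\,\pic(KU_p)^{h\Gamma}$, where $\pic(KU_p)$ is the Picard spectrum of $K(1)$-local $KU_p$-modules. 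First I would record $\pi_*\pic(KU_p)$: since $\pi_*KU_p=\Z_p[u^{\pm 1}]$ is a $2$-periodic graded-local ring, every invertible $K(1)$-local $KU_p$-module is a shift of $KU_p$, so $\pi_0\pic(KU_p)=\Pic(KU_p)\cong\Z/2\Z$ (generated by $\Sigma KU_p$); and, as for any Picard spectrum, $\pi_1\pic(KU_p)=(\pi_0KU_p)^\times=\Z_p^\times$ while $\pi_t\pic(KU_p)=\pi_{t-1}KU_p$ for $t\geq 2$, hence $\Z_p$ for $t$ odd and $0$ for $t$ even.

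Next I would feed this into the descent (homotopy fixed point) spectral sequence $E_2^{s,t}=H^s_{\mathrm{cont}}(\Gamma;\pi_t\pic(KU_p))\Rightarrow\pi_{t-s}\pic(S_{K(1)})$. For $\pi_0$ only the diagonal $t=s$ is relevant: $H^0(\Gamma;\Z/2\Z)=\Z/2\Z$ at $(0,0)$ (detecting the degree mod $2$, \ie the class of $\Sigma S_{K(1)}$), $H^1_{\mathrm{cont}}(\Gamma;\Z_p^\times)$ at $(1,1)$, and $H^s_{\mathrm{cont}}(\Gamma;\pi_s\pic(KU_p))$ for $s\geq 2$. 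Since $\psi^a$ fixes $\pi_0KU_p$, the $\Gamma$-action on $\Z_p^\times=\pi_1\pic(KU_p)$ is trivial and the $(1,1)$-term is $\operatorname{Hom}_{\mathrm{cont}}(\Z_p^\times,\Z_p^\times)$.

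For $p$ odd this closes out. Here $\Gamma\cong\Z/(p-1)\Z\times\Z_p$ has $p$-cohomological dimension $1$, so $H^s_{\mathrm{cont}}(\Gamma;M)=0$ for $s\geq 2$ whenever $M$ is a $\Z_p$-module; together with $\pi_s\pic(KU_p)=0$ for even $s\geq 2$ this kills all diagonal entries with $s\geq 2$, no differential enters or leaves the $(1,1)$-spot, and the $(0,0)$-class is a permanent cycle (it records the $K$-theory Picard degree of $\Sigma S_{K(1)}$). One is left with a short exact sequence $0\ra\operatorname{Hom}_{\mathrm{cont}}(\Z_p^\times,\Z_p^\times)\ra\Pic_1\ra\Z/2\Z\ra 0$. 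The decomposition $\Z_p^\times\cong\Z/(p-1)\Z\times\Z_p$ gives $\operatorname{Hom}_{\mathrm{cont}}(\Z_p^\times,\Z_p^\times)\cong\Z/(p-1)\Z\times\Z_p$, and the extension is pinned down by $2[\Sigma S_{K(1)}]=[\Sigma^2 S_{K(1)}]$: smashed with $KU_p$ this is $\Sigma^2 KU_p\cong KU_p$ carrying the twist $\psi^a(u)=a\,u$, \ie the identity character of $\Z_p^\times$, which is simultaneously a generator of the $\Z/(p-1)\Z$- and $\Z_p$-summands. Solving the extension yields $\Pic_1\cong\Z_p\times\Z/2(p-1)\Z=\Z_p\times\Z/q\Z$ with $q=2p-2$ (and, as a bonus, $\Pic_1$ is topologically generated by $[\Sigma S_{K(1)}]$). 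Equivalently, for $p$ odd one may skip the spectral sequence and compute directly the algebraic Picard group of invertible Morava modules, to which $\Pic_1$ then maps isomorphically.

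The prime $2$ is where the real difficulty lies and where $\Z/4\Z$ --- rather than a direct summand $\Z/2\Z$ --- appears. Now $\Gamma=\Z_2^\times\cong\Z/2\Z\times\Z_2$ has infinite cohomological dimension, so the higher terms $H^s_{\mathrm{cont}}(\Gamma;\pi_s\pic(KU_2))$, $s\geq 2$, need not vanish, and one must (i) determine which survive the differentials of the descent spectral sequence and (ii) resolve the resulting filtration on $\Pic_1$. Matching the target $\Pic_1\cong\Z_2^\times\times\Z/4\Z\cong\Z/2\Z\times\Z_2\times\Z/4\Z$, the expectation is that precisely one extra $\Z/2\Z$ survives, namely the exotic subgroup $\kappa_1\subseteq\Pic_1$ of classes $P$ with $(KU_2)_*P\cong(KU_2)_*$ yet $P\not\simeq S_{K(1)}$; I would pin this down either by exhibiting such a $P$ explicitly (a construction in the style of \cite{hms}), which forces the corresponding class to be a permanent cycle, or by transporting the computation across the $C_2$-Galois extension $KO_2\ra KU_2$ and reading it off the homotopy fixed point spectral sequence there. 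What then remains is the bookkeeping of the relevant differentials together with two group-extension problems relating $\kappa_1$, the even-degree twistings $\operatorname{Hom}_{\mathrm{cont}}(\Z_2^\times,\Z_2^\times)\cong\Z/2\Z\times\Z/2\Z\times\Z_2$, and the degree mod $2$ --- in particular showing that $[\Sigma S_{K(1)}]$ has order $4$ in the appropriate subquotient --- to conclude $\Pic_1\cong\Z_2^\times\times\Z/4\Z$. Controlling these $2$-primary differentials and extensions is the principal obstacle; for $p$ odd the only genuine content is the identification of $\operatorname{Hom}_{\mathrm{cont}}(\Z_p^\times,\Z_p^\times)$ and the single extension above.
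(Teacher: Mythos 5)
The survey does not itself prove this theorem; it cites Hopkins--Mahowald--Sadofsky and, in the surrounding text, indicates their strategy: identify $K(1)$-local invertibility with freeness of the Morava module $(E_1)_*X$, map $\Pic_1$ to the algebraic Picard group $\Pic_1^{\mathrm{alg}}$ of invertible continuous $(E_1)_*[[\mathbb{G}_1]]$-modules, compute that, and then control the exotic kernel $\kappa_1$ (zero for odd $p$, a constructed $\Z/2\Z$ at $p=2$). Your proposal repackages this as Galois descent for the Picard \emph{spectrum} along the pro-Galois extension $S_{K(1)} \ra KU_p$, a genuinely different and more modern route that postdates \cite{hms}; the algebraic Picard group HMS compute is essentially the contribution of $H^0$ and $H^1$ on your $E_2$-page, and $\kappa_1$ is the contribution of higher filtration. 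For odd $p$ your argument is essentially complete and correct: $\operatorname{cd}_p(\Z_p^\times)=1$ kills the diagonal for $s\geq 2$, the $(1,1)$-entry $\Hom_{\mathrm{cont}}(\Z_p^\times,\Z_p^\times)\cong\Z/(p-1)\Z\times\Z_p$ is a permanent cycle, and the extension is resolved by the observation that $2[\Sigma S_{K(1)}]$ corresponds to the identity character. What this buys over HMS is conceptual clarity and a uniform mechanism; what HMS buys is independence of the descent machinery and a concrete model of each invertible spectrum. (One small caveat you should at least flag: \cite{g-lawson,ms-picard} establish $\pic$-descent for \emph{finite} Galois extensions, and \cite{hms-picard} for $E_{p-1}^{hG}$ with $G$ finite, so invoking the spectral sequence for the full profinite $\Z_p^\times$-action on $KU_p$ requires a separate justification of convergence in the $K(1)$-local pro-Galois setting.)

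At $p=2$, however, there is a real gap. You correctly identify that $\Z_2^\times$ has infinite cohomological dimension so the higher-filtration terms do not vanish formally, and you correctly identify $\kappa_1$ as the expected source of the extra $\Z/2\Z$; but the passage beginning ``Matching the target $\ldots$ the expectation is that precisely one extra $\Z/2\Z$ survives'' is reverse-engineered from the known answer rather than derived. The heart of the $p=2$ statement --- that $\kappa_1\cong\Z/2\Z$, together with the two extensions that produce $\Z/4\Z$ rather than $\Z/2\Z\times\Z/2\Z$ and that $[\Sigma S_{K(1)}]$ has order $4$ in the relevant subquotient --- is exactly HMS's Proposition 2.7, whose content is the explicit construction of the exotic invertible spectrum and the analysis pinning down the extension. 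You gesture at two routes (constructing $P$ explicitly, or transporting across $\KO_2\ra\KU_2$) but carry out neither, and you acknowledge as much (``the principal obstacle''). So as written, your argument proves the odd-prime case and reduces the $p=2$ case to precisely the difficulty that the cited reference resolves; it does not constitute an independent proof at $p=2$.
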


In the $K(n)$-local setting the notion of algebraic elements in $\Pic_n$ 
is slightly more involved. Hopkins, Mahowald and Sadofsky show \cite{hms} 
(see also \cite[Theorem 2.4]{ghmr}) that a $K(n)$-local spectrum $X$ is 
$K(n)$-locally invertible if and only if $\pi_*(L_{K(n)}(E_n \wedge X))$ 
is a free $(E_n)_*$-module of rank one and if and only if 
$\pi_*(L_{K(n)}(E_n \wedge X))$ is invertible as a continuous module over the 
completed group ring $(E_n)_*[[\mathbb{G}_n]]$. Here, $\mathbb{G}_n$ is 
the full Morava-stabilizer group. Hence applying 
$\pi_*(L_{K(n)}(E_n \wedge -)$ gives a map from $\Pic_n$ to the Picard group 
of  continuous  $(E_n)_*[[\mathbb{G}_n]]$-modules and this group is called 
$\Pic_n^{\text{alg}}$. The kernel of the map, $\kappa_n$, collects the 
exotic elements in $\Pic_n$: 
$$ 0 \ra \kappa_n \ra \Pic_n \ra \Pic_n^{\text{alg}}.$$

For odd primes, all elements in $\Pic_1$ can be detected algebraically
but for $p=2$ one has a non-trivial element in $\kappa_1$. See \cite{ghmr}  
for $\Pic_2$ at $p=3$ and a general overview. There is ongoing work on 
$\Pic_2$ at $p=2$ by Agn\`es Beaudry, Irina Bobkova, Paul Goerss and 
Hans-Werner Henn. 

\subsection{Brauer groups of commutative rings}
Probably most of you will know the definition of the Brauer group of a field.
But as for many features that we want to transfer to the spectral world we
need to consider algebraic concepts developed for commutative rings
(not fields).

Azumaya started to think about general Brauer groups \cite{azumaya} in the
setting of local rings. A
general definition of the Brauer group of a commutative ring $R$ was
given by Auslander and Goldman \cite{auslander-g} as Morita equivalence
classes of Azumaya algebras. The Brauer group was then
globalized
to schemes by Grothendieck \cite{grothendieck}. He also shows that the
Brauer group of the initial ring $\Z$ is trivial; this is a byproduct of his
identification of Brauer groups of number rings in
\cite[III, Proposition (2.4)]{grothendieck}.

\subsection{Brave new Brauer groups}
Baker and Lazarev define in \cite{bl} what an Azumaya algebra spectrum is.
We use one version of this definition in \cite{BRS} to develop
Brauer groups for commutative ring spectra. Related concepts can be found in
\cite{johnson} and \cite{toen}.

Fix a cofibrant commutative ring spectrum $R$.
\begin{defn}
A cofibrant associative $R$-algebra $A$ is called an
\emph{Azumaya $R$-algebra spectrum} if $A$ is dualizable and faithful as an
$R$-module spectrum and if the canonical map
$$ A \wedge_R A^o \ra F_R(A, A)$$
is a weak equivalence.
\end{defn}

We list some crucial properties of Azumaya algebra spectra. For the first
property recall the discussion of derived centers from Definition
\ref{def:derivedcenter}.
\begin{prop}
\begin{enumerate}
\item[]
\item \cite[Proposition 2.3]{bl}
If $A$ is an Azumaya $R$-algebra spectrum, then $A$ is homotopically central
over $R$, \ie, $R \ra \THH_R(A)$ is a weak equivalence.
\item \cite[Proposition 1.3]{BRS}
Every Azumaya $R$-algebra spectrum $A$ is separable over $R$.
\item \cite[Proposition 1.5]{BRS}
If $A$ is Azumaya over $R$ and if $C$ is a cofibrant commutative $R$-algebra
then $A \wedge_R C$ is Azumaya over $C$. Conversely, if $C$ is as above and
dualizable and faithful as an $R$-module, then $A \wedge_R C$ being Azumaya
over $C$ implies that $A$ is Azumaya over $R$.

If $A$ and $B$ are Azumaya over $R$, then $A \wedge_R B$ is also Azumaya over
$R$.
\item \cite[2.2]{BRS}
If $M$ is a faithful, dualizable, cofibrant $R$--module, then
(a cofibrant replacement of) $F_R(M, M)$ is an $R$-Azumaya algebra spectrum.
\end{enumerate}
\end{prop}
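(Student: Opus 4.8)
The claim to be verified is that, for a faithful, dualizable, cofibrant $R$-module $M$, a cofibrant replacement $A$ of $F_R(M,M)$ (as an associative $R$-algebra) is an Azumaya $R$-algebra spectrum; that is, $A$ is dualizable and faithful over $R$ and the canonical map $\nu\colon A\wedge_R A^o \to F_R(A,A)$ is a weak equivalence. The plan is to handle these three conditions in turn, using $M$ as the bridge. Writing $M^\vee = F_R(M,R)$ for the $R$-linear dual, dualizability of $M$ gives an equivalence $A \simeq M\wedge_R M^\vee$ of $R$-modules, and since the dual of a dualizable module is again dualizable, $A$ is a smash product of two dualizable $R$-modules and hence dualizable over $R$; this settles the dualizability condition at once. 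For faithfulness I would exploit the triangle identities: the coevaluation $R\to M\wedge_R M^\vee$ and evaluation $M^\vee\wedge_R M\to R$ exhibit $M$ as a retract of $M\wedge_R M^\vee\wedge_R M \simeq A\wedge_R M$, so smashing this retraction with an arbitrary $R$-module $N$ shows $N\wedge_R M$ is a retract of $N\wedge_R A\wedge_R M$; thus $N\wedge_R A\simeq \ast$ forces $N\wedge_R M\simeq\ast$, and then $N\simeq\ast$ by faithfulness of $M$. So $A$ is faithful over $R$.

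The heart of the argument is the third condition, that $\nu\colon A\wedge_R A^o\to F_R(A,A)$, sending $a\wedge b$ to the conjugation $x\mapsto axb$, is a weak equivalence. First I would identify $A^o$ with $F_R(M^\vee,M^\vee)$ as $R$-algebras: $R$-linear dualization $F_R(-,R)$ restricts to a contravariant self-equivalence of the homotopy category of dualizable $R$-modules which reverses composition, so it carries the endomorphism algebra of $M$ to that of $M^\vee$, and under $M^{\vee\vee}\simeq M$ the latter is $\simeq M^\vee\wedge_R M$. Next I would invoke the standard ``Künneth'' equivalence for function spectra between dualizable modules,
\[ F_R(M,M)\wedge_R F_R(M^\vee,M^\vee) \;\overset{\sim}{\longrightarrow}\; F_R(M\wedge_R M^\vee,\; M\wedge_R M^\vee), \]
and then check that, under the identifications $A\simeq M\wedge_R M^\vee$ and $A^o\simeq F_R(M^\vee,M^\vee)$, this equivalence is precisely $\nu$. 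The point is that left multiplication by $a$ on $A=M\wedge_R M^\vee$ acts through the $M$-tensorand while right multiplication by $b$ acts, via the identification $A^o=F_R(M^\vee,M^\vee)$, through the $M^\vee$-tensorand, so that $x\mapsto axb$ is exactly the image of $a\wedge b$ under the evident map $\mathrm{End}_R(M)\wedge_R\mathrm{End}_R(M^\vee)\to\mathrm{End}_R(M\wedge_R M^\vee)$.

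A more conceptual alternative, which I would keep in reserve, is to observe that $M$ (as a left $A$-, right $R$-bimodule) together with $M^\vee$ (as a left $R$-, right $A$-bimodule) implements a Morita equivalence between $R$-modules and $A$-modules: one has $M\wedge_R M^\vee\simeq A$ as $A$-bimodules and $M^\vee\wedge_A M\simeq R$, the latter being where faithfulness and dualizability of $M$ over $R$ re-enter. Transporting the free $A$-bimodule $A\wedge_R A^o$ and the $A$-bimodule $F_R(A,A)$ along this equivalence identifies them with $R$ and with $F_R(R,R)\simeq R$, carrying $\nu$ to the identity of $R$ and thereby forcing $\nu$ to be an equivalence.

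The main obstacle will be the bookkeeping in the second paragraph: to turn the chain $A\simeq M\wedge_R M^\vee$, $A^o\simeq F_R(M^\vee,M^\vee)$, and $\mathrm{End}_R(M)\wedge_R\mathrm{End}_R(M^\vee)\simeq\mathrm{End}_R(M\wedge_R M^\vee)$ into statements that are honest in the derived category \emph{and} compatible with the multiplicative structures, one must arrange $M$ (hence $A$) to be sufficiently cofibrant so that the point-set function spectra and smash products compute the derived ones and the relevant diagrams of algebra maps commute on the nose rather than merely up to unspecified homotopy. By contrast, the dualizability and faithfulness of $A$, and the formal manipulations with evaluation and coevaluation, are routine. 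Once $\nu$ is shown to be a weak equivalence, $A$ satisfies all three conditions and is therefore an Azumaya $R$-algebra spectrum.
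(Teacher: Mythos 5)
Your proposal addresses only part (d) of the proposition; parts (a)--(c) are not touched, though the survey presents them as a package, each cited from \cite{bl} or \cite{BRS}. For part (d) itself, your argument is correct and is essentially the standard one (and the one in \cite{BRS}): use dualizability of $M$ to rewrite $F_R(M,M)\simeq M\wedge_R M^\vee$, deduce dualizability of $A$ from closure of dualizable modules under $\wedge_R$, get faithfulness from the retract exhibited by the triangle identities, and reduce the Azumaya weak equivalence to the ``K\"unneth'' map $F_R(M,M)\wedge_R F_R(M^\vee,M^\vee)\to F_R(M\wedge_R M^\vee,M\wedge_R M^\vee)$, which is an equivalence because both sides are built from dualizable modules. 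You are right that the only real work is the bookkeeping that identifies the Azumaya map $\nu$ with this K\"unneth map under $A^o\simeq F_R(M^\vee,M^\vee)$, and your Morita-equivalence reformulation --- transporting $A\wedge_R A^o$ and $F_R(A,A)$ along the equivalence $\M_R\simeq\M_A$ implemented by $M$ and $M^\vee$ --- is a clean way to sidestep the element-chasing. The one point you gesture at but leave unresolved, namely that the point-set $F_R$ and $\wedge_R$ compute derived functors, is handled in the cited source by working with cofibrant/fibrant replacements in the EKMM framework where every object is fibrant; you should either invoke that or state explicitly that all smash and function constructions are taken in the derived sense. With that caveat spelled out, your proof of (d) stands; to address the full proposition you would still need arguments for (a) separability of Azumaya algebras, (b) the homotopical-centrality claim, and (c) the base-change and product stability.
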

Thus the endomorphism Azumaya algebras are the ones that are always there
and you want to ignore them.
\begin{defn}
Let $A$ and $B$ be two Azumaya $R$-algebra spectra. We call them 
\emph{Brauer equivalent} if there are dualizable, faithful $R$-modules
$N$ and $M$ 
such that there is an $R$-algebra equivalence
$$ A \wedge_R F_R(M, M) \simeq B \wedge_R F_R(N, N).$$

We denote by $Br(R)$ the set of Brauer equivalence classes of $R$-Azumaya
algebra spectra.
\end{defn}
Note that $Br(R)$ is an abelian group with multiplication induced by the
smash product over $R$. Johnson shows \cite[Lemma 5.7]{johnson}
that one can reduce the above relation to what he calls
\emph{Eilenberg-Watts
equivalence}. This implies that one can still think about the Brauer group of
a commutative ring spectrum as the Morita equivalence classes of
Azumaya algebra spectra.

We showed a Galois descent result
\cite[Proposition 3.2]{BRS}, saying that under some natural condition you
can descent an Azumaya algebra $C$ over $B$ to an Azumaya algebra $C^{hG}$
over $A$ is $A \ra B$ is a faithful $G$-Galois extension with finite Galois
group $G$.

\subsection{Examples of Brauer groups}
As we know that $Br(\Z)=0$, we conjecture \cite{BRS} that the
Brauer group of the initial ring spectrum is also trivial. This
conjecture was proven in
\cite[Corollary 7.17]{antieau-g}. They actually showed a much stronger
result:
\begin{thm} \, \cite[Theorem 7.16]{antieau-g} \, \label{thm:brconn}
If $R$ is a connective commutative ring spectrum such that $\pi_0(R)$ is
either $\Z$ or the Witt vectors $W(\F_q)$, then the Brauer group of $R$
is trivial.
\end{thm}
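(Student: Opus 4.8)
The plan is to reduce the theorem to two inputs: a vanishing statement for the Brauer group of the underlying discrete commutative ring $\pi_0(R)$, and a deformation-theoretic argument that lifts Azumaya algebras over $R$ to Azumaya algebras over the Postnikov tower and controls the obstructions. First I would recall Grothendieck's computation that $\mathrm{Br}(\mathbb{Z}) = 0$, together with the analogous fact for $W(\mathbb{F}_q)$, which follows because $W(\mathbb{F}_q)$ is a complete (hence Henselian) discrete valuation ring with residue field $\mathbb{F}_q$: the Brauer group of a Henselian local ring agrees with that of its residue field, and $\mathrm{Br}(\mathbb{F}_q) = 0$ since finite fields have trivial Brauer group (Wedderburn). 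So in both cases $\mathrm{Br}(\pi_0 R) = 0$.

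Next I would set up the Postnikov tower of $R$ in commutative ring spectra, using the machinery from Section~\ref{sec:obs}: write $R$ as the limit of $\tau_{\leq n} R$, with successive stages related by square-zero extensions $\tau_{\leq n+1} R \to \tau_{\leq n} R$ whose ``kernel'' is $\Sigma^{n+1} H\pi_{n+1}(R)$. The strategy is to show that $\mathrm{Br}$ is invariant under each such square-zero extension, and then assemble the limit. For a single stage, I would use the description of Azumaya algebras via the Brauer space $\mathbf{Br}(R)$ (whose $\pi_0$ is $\mathrm{Br}(R)$), and the fact that for a square-zero extension the relevant deformation and obstruction groups are topological Andr\'e-Quillen cohomology groups $\mathrm{TAQ}^*$ of the base with coefficients in a shift of $H\pi_{n+1}(R)$ -- these are precisely the groups that measure the fibers of $\mathbf{Br}(\tau_{\leq n+1} R) \to \mathbf{Br}(\tau_{\leq n} R)$. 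One shows these obstruction groups are concentrated in the right range so that they neither obstruct lifting an Azumaya algebra nor introduce new classes; connectivity estimates for $\mathrm{TAQ}$ of connective spectra (the TAQ of an $n$-connected map is $n$-connected, roughly) force the relevant homotopy groups of the fiber to vanish in degree $0$. This gives $\mathrm{Br}(\tau_{\leq n+1} R) \cong \mathrm{Br}(\tau_{\leq n} R)$ for all $n \geq 0$, hence $\mathrm{Br}(R) \cong \mathrm{Br}(\tau_{\leq 0} R) = \mathrm{Br}(H\pi_0 R)$.

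Finally I would invoke that $\mathrm{Br}(H\pi_0 R)$ agrees with the classical Brauer group $\mathrm{Br}(\pi_0 R)$ -- this is the Eilenberg-Mac\,Lane comparison, analogous to the Galois-extension comparison \cite[Proposition 4.2]{rognes}: an Azumaya $H\pi_0 R$-algebra in spectra is, up to equivalence, the Eilenberg-Mac\,Lane spectrum of a discrete Azumaya $\pi_0 R$-algebra, because connectivity and the dualizability/faithfulness conditions force the algebra to be discrete. Combining with the first step, $\mathrm{Br}(H\pi_0 R) \cong \mathrm{Br}(\pi_0 R) = 0$, and hence $\mathrm{Br}(R) = 0$.

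The main obstacle I expect is the square-zero-extension step: carefully identifying the fiber of $\mathbf{Br}(\tau_{\leq n+1} R) \to \mathbf{Br}(\tau_{\leq n} R)$ with an appropriate $\mathrm{TAQ}$-cohomology spectrum (this requires a Goerss-Hopkins / Basterra-style obstruction-theory package for Azumaya algebras, not just for commutative algebras), and then getting sharp enough connectivity bounds on $\mathrm{TAQ}$ to conclude the degree-zero part of the fiber is trivial. Making the deformation theory work uniformly for \emph{associative} (rather than commutative) algebras, while the base varies through \emph{commutative} ring spectra, is the delicate point; one likely needs that an Azumaya algebra is \'etale-locally (or even Zariski-locally in $\pi_0$) a matrix algebra, so that its deformations are controlled by those of the base, reducing the associative deformation problem to the already-understood commutative one. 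Handling the passage to the limit over the full Postnikov tower also requires a $\lim^1$-type argument, but the connectivity estimates should make the relevant towers essentially constant in each degree, so this is routine once the single-stage statement is in hand.
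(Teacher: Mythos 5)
Your proposal reaches the right conclusion but has a genuine error in its final step, and the middle step does not match the mechanism actually used.

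The final step is wrong as stated. You claim that $\mathrm{Br}(H\pi_0 R) \cong \mathrm{Br}(\pi_0 R)$ because ``connectivity and the dualizability/faithfulness conditions force the algebra to be discrete.'' This is false: an Azumaya $Hk$-algebra (for $k$ a discrete commutative ring) is a perfect $Hk$-module with an associative multiplication, and it need not be concentrated in degree zero. What is true is To\"en's identification of the \emph{derived} Brauer group of $\mathrm{Spec}\,k$ with $H^1_{\text{\'et}}(\mathrm{Spec}\,k;\Z)\times H^2_{\text{\'et}}(\mathrm{Spec}\,k;\mathbb{G}_m)$; the second factor is the classical $\mathrm{Br}(k)$, but the first factor detects Morita classes of genuinely non-discrete Azumaya algebras (up to Morita equivalence, these correspond to locally-shifted line bundles one level up). The theorem goes through only because both factors vanish for $\Z$ and $W(\F_q)$: $\mathrm{Br}(\Z)=0$ (Grothendieck/Minkowski) and $\mathrm{Br}(W(\F_q))=\mathrm{Br}(\F_q)=0$ (Henselian DVR plus Wedderburn), and $H^1_{\text{\'et}}(-;\Z)=0$ because both rings are normal integral domains (equivalently, for $\Z$, because $\mathrm{Spec}\,\Z$ is simply connected). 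Your argument omits the $H^1(-;\Z)$ contribution entirely, and the justification you give for omitting it is incorrect.

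The middle step is also not the route Antieau--Gepner take, and the difficulty you flag (making TAQ-style obstruction theory work for \emph{associative} deformations over a \emph{commutative} base) is precisely why they do something else. Their proof runs through \'etale descent for the sheaf of Brauer spaces on $\mathrm{Spec}\,\pi_0 R$, giving a descent spectral sequence $H^s_{\text{\'et}}(\pi_0 R;\pi_t)\Rightarrow\pi_{t-s}\mathbf{Br}(R)$ (their Theorem 7.12). The reduction to $\pi_0 R$ then happens not by a connectivity estimate on TAQ but because for $t\ge 3$ the homotopy sheaf $\pi_t$ of the Brauer sheaf is the quasi-coherent sheaf associated to $\pi_{t-2}R$, and quasi-coherent cohomology on an affine scheme vanishes in positive degrees; so the only surviving contributions to $\pi_0\mathbf{Br}(R)$ are $E_2^{1,1}=H^1_{\text{\'et}}(\pi_0 R;\Z)$ and $E_2^{2,2}=H^2_{\text{\'et}}(\pi_0 R;\mathbb{G}_m)$ (Corollary 7.13). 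Your Postnikov-tower sketch is morally heading in this direction, and the remark that ``an Azumaya algebra is \'etale-locally a matrix algebra'' is indeed the key input, but formulated as a TAQ obstruction problem in the commutative category it does not close; reformulated as descent along an \'etale cover it is exactly the Antieau--Gepner argument. As written, the proposal has one false claim and one gap that would require essentially reinventing \cite[\S 7]{antieau-g} to fill.
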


Different approaches can be used to construct a \emph{Brauer space}
for a commutative 
ring spectrum $R$, $Br_R$, \cite[Definition 7.1]{antieau-g},
\cite[\S 5]{g-lawson}, \cite{szymik} and to 
show that this space is a delooping of the Picard space,
$\PIC$ 
$$ \Omega Br_R \simeq \PIC(R)$$
with $\pi_0(Br_R) \cong Br(R)$.

An important question in the classical context of Brauer groups of schemes is 
to which extend these groups can be controlled by the second \'etale cohomology 
group. See the introduction of \cite{toen} for a nice overview. 
To\"en shows that for quasi-compact and quasi-separated schemes $X$ 
one can identify the \emph{derived Brauer group of $X$} with 
$H^1_{\text{\'et}}(X; \mathbb{G}_m) \times H^2_{\text{\'et}}(X; \mathbb{G}_m)$. 
The work of Antieau and Gepner \cite[\S 7.4]{antieau-g}  
relates Brauer groups of connective commutative ring spectra to
\'etale cohomology 
groups by establishing a spectral sequence starting from \'etale 
cohomology groups for \'etale sheaves over a connective commutative 
ring spectrum converging to the homotopy groups of the Brauer space 
\cite[Theorem 7.12]{antieau-g}. 

It is not hard to see that the integral version of the quaternions gives a
non-trivial element in $Br(S[\frac{1}{2}])$ \cite[Proposition 6.3]{BRS};
Antieau and Gepner show \cite[Corollary 7.18]{antieau-g}
$$ Br(S\textstyle{[\frac{1}{p}]}) \cong \Z/2\Z \quad \text{ for all primes } p$$
and they prove the existence of a short exact sequence
$$ 0 \ra Br(S_{(p)}) \ra \Z/2\Z \oplus \bigoplus_{q \neq p} \Q/\Z \ra 
\Q/\Z \ra 0$$ 
by applying \cite[Corollary 7.13]{antieau-g} where they calculate the homotopy 
groups
of the Brauer space of any connective commutative ring spectrum $R$ in terms
of \'etale cohomology groups and the homotopy groups of $R$.

They use the classical exact sequence for the Brauer group 
of the rationals \cite[\S 2]{grothendieck} coming from the 
Albert-Brauer-Hasse-Noether theorem:  
$$ 0 \ra Br(\Q) \ra \Z/2\Z \oplus \bigoplus_{p \text{ prime }} Br(\Q_p) 
\ra \Q/\Z \ra 0$$
with $Br(\Q_p) =\Q/\Z$. 
This determines $Br(\Z[\frac{1}{p}])$ and $Br(\Z_{(p)})$ and this in turn 
gives the above result for the sphere spectra with $p$ inverted or 
localized at $p$.

In \cite[Theorem 10.1]{BRS} we show that the $K(n)$-local Brauer group of the 
$K(n)$-local sphere is non-trivial at least for odd primes and $n >1$. 

Gepner and Lawson prove a version of Galois descent for a suitable
$\infty$-category of Azumaya algebras:
\begin{thm} \, \cite[Theorem 6.15]{g-lawson} \,
There is an equivalence of symmetric monoidal
$\infty$-categories
$$ Az_A \ra (Az_B)^{hG}$$
for every $G$-Galois extension $A \ra B$ with finite $G$.
\end{thm}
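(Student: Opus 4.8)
The plan is to establish the equivalence $Az_A \simeq (Az_B)^{hG}$ for a finite $G$-Galois extension $A \ra B$ by combining Galois descent for modules with the identification of Azumaya algebras as dualizable objects in the $\infty$-category of $A$-modules carrying an algebra structure whose ``center'' (derived endomorphisms as a bimodule) is trivial. First I would recall the fundamental descent statement underpinning everything: for a faithful $G$-Galois extension $A \ra B$ of commutative ring spectra with finite $G$, the functor $- \wedge_A B \colon R\text{mod} \to B\text{mod}$ exhibits the $\infty$-category of $A$-modules as the homotopy fixed points $(B\text{mod})^{hG}$, where $G$ acts on $B\text{mod}$ through its action on $B$. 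This is the module-level incarnation of Galois descent (the same principle that gives the Picard space descent \eqref{eq:picdescent}), and it passes to the symmetric monoidal structure since $- \wedge_A B$ is symmetric monoidal and $\wedge_A B$ commutes with $\wedge_A$ via the base-change identity $M \wedge_A B \wedge_B N \wedge_A B \simeq (M \wedge_A N) \wedge_A B$.

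Next I would promote this to algebra objects. Applying the lax symmetric monoidal functor $\mathrm{Alg}(-)$, which preserves limits of $\infty$-categories, yields an equivalence $\mathrm{Alg}(A\text{mod}) \simeq \mathrm{Alg}((B\text{mod})^{hG}) \simeq (\mathrm{Alg}(B\text{mod}))^{hG}$, i.e. associative $A$-algebras descend from $G$-equivariant associative $B$-algebras. The remaining point is to check that this equivalence restricts to the full subcategories of Azumaya algebras on both sides. Here I would invoke the characterization of Azumaya algebras that is already visible in the excerpt: $A$ is Azumaya over $R$ exactly when it is dualizable and faithful as an $R$-module and $R \ra \THH_R(A)$ is an equivalence (homotopically central, cf.\ \cite[Proposition 2.3]{bl}). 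Each of these conditions is detectable after faithful base change along $A \ra B$: dualizability and faithfulness are preserved and reflected by a faithful symmetric monoidal functor, and the formation of $\THH_R(-)$ (derived bimodule endomorphisms) commutes with base change, so $C$ is Azumaya over $A$ if and only if $C \wedge_A B$ is Azumaya over $B$ — this is precisely the content of \cite[Proposition 1.5]{BRS}. Consequently the descent equivalence on all algebras carries $Az_A$ isomorphically onto the subcategory of $(\mathrm{Alg}(B\text{mod}))^{hG}$ whose underlying objects are Azumaya, which is $(Az_B)^{hG}$ computed in the appropriate sense.

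The hard part will be the coherence bookkeeping: making precise that ``homotopy fixed points'' on the level of $\infty$-categories interacts correctly with passing to a full subcategory, and that the full subcategory inclusion $Az_B \hookrightarrow \mathrm{Alg}(B\text{mod})$ is $G$-equivariant so that $(Az_B)^{hG}$ really is the full subcategory of $(\mathrm{Alg}(B\text{mod}))^{hG}$ one wants. One must also verify that the symmetric monoidal structure on $Az_A$ (given by $\wedge_A$, which makes sense because the smash product of Azumaya algebras is Azumaya by \cite[Proposition 1.5]{BRS}) matches the one inherited by $(Az_B)^{hG}$; this follows formally once the ambient equivalence $\mathrm{Alg}(A\text{mod}) \simeq (\mathrm{Alg}(B\text{mod}))^{hG}$ is known to be symmetric monoidal, which in turn rests on the symmetric monoidality of the base-change functor and the fact that homotopy limits of symmetric monoidal $\infty$-categories are computed underlying-wise. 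I would organize the argument so that the genuinely new input is isolated in the stability of the Azumaya condition under faithful descent, everything else being an application of the general descent machinery already used for $\pic$ and $\PIC$.
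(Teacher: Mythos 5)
This survey only states the theorem with a citation to Gepner--Lawson; it gives no proof, so there is nothing internal to compare against. Your outline is nonetheless the route Gepner and Lawson themselves take: (i) faithful Galois descent at the level of module $\infty$-categories, giving a symmetric monoidal equivalence $A\mathrm{mod}\simeq(B\mathrm{mod})^{hG}$; (ii) the observation that $\mathrm{Alg}(-)$ commutes with limits of symmetric monoidal $\infty$-categories, giving $\mathrm{Alg}(A\mathrm{mod})\simeq(\mathrm{Alg}(B\mathrm{mod}))^{hG}$; (iii) restriction to the full subcategories of Azumaya objects, which is legitimate because the Azumaya condition is preserved and detected by faithful base change (\cite[Proposition 1.5]{BRS}) and because a homotopy limit of full-subcategory inclusions is the expected full subcategory of the limit. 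The bookkeeping you defer --- equivariance of the inclusion $Az_B\hookrightarrow\mathrm{Alg}(B\mathrm{mod})$, compatibility with the symmetric monoidal structures --- is exactly the formal part, and your organization isolates the genuine content correctly.

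One thing to fix: your argument uses, and genuinely needs, the hypothesis that the Galois extension is \emph{faithful}. Module-level descent $A\mathrm{mod}\simeq(B\mathrm{mod})^{hG}$ fails outright for a nonfaithful $G$-Galois extension, since any nonzero $A$-module annihilated by $-\wedge_A B$ shows that $-\wedge_A B$ is not conservative; the paper's own example $F((BC_p)_+,H\F_p)\ra H\F_p$ is such a nonfaithful $C_p$-Galois extension. The survey's paraphrase of \cite[Theorem 6.15]{g-lawson} drops the word ``faithful,'' but the original theorem and your proof both require it, and so does the input \cite[Proposition 1.5]{BRS}. State the faithfulness hypothesis up front rather than introducing it silently in the first sentence of the argument. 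A small cosmetic point: your parenthetical claim that $\THH_R(-)$ commutes with base change is not actually used (you correctly route the detection of the Azumaya property through \cite[Proposition 1.5]{BRS}), and as stated it would need an extra smoothness/dualizability hypothesis, so it is safer to drop it.
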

They also construct
a map of $\infty$-groupoids $Az_R \ra Br_R$ for any commutative ring spectrum
$R$ and show that this map is essentially surjective, such that equality in
$\pi_0(Br_R)$ corresponds precisely to Morita equivalence. They investigate
the algebraic Brauer groups (\ie, the Morita classes of Azumaya
algebras over the coefficients) \cite[\S 7.1]{g-lawson} of $2$-periodic
commutative ring spectra with vanishing odd homotopy groups, such as
$\KU$ or $E_n$, by relating them to the classical Brauer-Wall group of
$\pi_0$ of the ring spectrum and they identify a non-trivial Morita
class of a quaternion $\KO$-algebra that becomes Morita-trivial over $\KU$.

There is recent work by Hopkins and Lurie \cite{hopkins-l} who identify
the
$K(n)$-local Brauer group of a Lubin-Tate spectrum $E$ at all primes. For
odd primes they obtain:
\begin{thm} \, \cite[Theorem 1.0.11]{hopkins-l} \, 
The $K(n)$-local Brauer group of $E$ is the product of the
Brauer-Wall group of the residue field $\pi_0(E)/m$ and a group $Br'(E)$ which
in turn can be expressed as an inverse limit of abelian groups $Br'_\ell$ such
that the kernel of $Br'_\ell \ra Br'_{\ell-1}$ is non-canonically isomorphic
to $m^{\ell+2}/m^{\ell+3}$.
\end{thm}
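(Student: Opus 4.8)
The plan is to compute $\pi_0$ of the $K(n)$-local Brauer spectrum $Br_E$, formed for the symmetric monoidal structure $X \otimes Y = L_{K(n)}(X \wedge_E Y)$ on $K(n)$-local $E$-modules, by filtering $E$ along its maximal ideal and running the deformation theory of Azumaya algebras over the resulting tower. Write $\R = \pi_0(E) = W(\kappa)[[u_1, \ldots, u_{n-1}]]$, a complete regular local ring with maximal ideal $m$ and residue field $\kappa = \pi_0(E)/m$; since $E$ is even $2$-periodic, its homotopy is $2$-periodic over $\R$, the $\infty$-category of $K(n)$-local $E$-modules is controlled by $\Z/2$-graded $\R$-modules, and $E \simeq \lim_\ell E/m^\ell$. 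A first key reduction is $Br_E \simeq \lim_\ell Br_{E/m^\ell}$: one must check that Azumaya $E$-algebras and Morita equivalences between them are compatibly and faithfully approximated by their reductions modulo powers of $m$, using that Azumaya algebras are dualizable (hence preserved by the relevant limits of $\infty$-categories) together with $m$-completeness of $E$.

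Next I would split off the algebraic part. The $2$-periodicity identifies ``even'' Azumaya $E$-algebras with $\Z/2$-graded Azumaya algebras over the discrete ring $\R$, and reduction along $\R \to \kappa$ is an isomorphism $\mathrm{BW}(\R) \cong \mathrm{BW}(\kappa)$ of Brauer--Wall groups, because the moduli of Azumaya algebras is smooth, so lifts along the surjections $\R/m^{\ell+1} \to \R/m^\ell$ exist and are essentially unique. This produces a surjection $Br(E) \to \mathrm{BW}(\kappa)$, namely ``reduce to the residue field'' $E \mapsto E/m = K_n$ (and $Br(K_n) \cong \mathrm{BW}(\kappa)$ because $K_n$-modules are $\Z/2$-graded $\kappa$-vector spaces), together with a splitting obtained by base-changing a $2$-periodic Azumaya algebra along the map of $E_\infty$-rings $\mathbb{S}_{W(\kappa)} \to E$ from the ($K(n)$-local) spherical Witt vectors. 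Setting $Br'(E) := \ker\bigl(Br(E) \to \mathrm{BW}(\kappa)\bigr)$ then gives $Br(E) \cong \mathrm{BW}(\kappa) \times Br'(E)$.

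It remains to describe $Br'(E)$ as an inverse limit. From $Br_E \simeq \lim_\ell Br_{E/m^\ell}$ one gets $Br'(E) \cong \lim_\ell Br'_\ell$ with $Br'_\ell := \ker\bigl(Br(E/m^{\ell+c}) \to \mathrm{BW}(\kappa)\bigr)$ for an index shift $c$ chosen so that $Br'_0$ is the first nonzero stage. To analyze the transition maps, observe that $E/m^{\ell+1} \to E/m^\ell$ is a square-zero extension whose ideal is the $K_n$-module $N_\ell$ with $\pi_0 N_\ell = m^\ell/m^{\ell+1}$, concentrated in even degrees. The fiber of $Br_{E/m^{\ell+1}} \to Br_{E/m^\ell}$ is connective (lifting Azumaya algebras along nilpotent extensions is unobstructed), and its $\pi_0$ is the deformation-theoretic term classifying Azumaya algebras over $E/m^{\ell+1}$ that are residually Morita-trivial over $E/m^\ell$. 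Here homotopical centrality of Azumaya algebras, $\THH_{E/m^\ell}(A) \simeq E/m^\ell$, is used crucially: it kills the algebra-deformation ambiguity, so this $\pi_0$ is a pure ``$\mathbb{G}_m$-gerbe'' contribution, which one identifies with $\pi_{-2}$ of the square-zero ideal. Using the deloopings $\Omega Br_R \simeq \PIC(R)$ and $\PIC(R) \simeq \Pic(R) \times BGL_1(R)$ to pin down that degree, and the even $2$-periodicity to convert $\pi_{-2}N_\ell$ into $\pi_0 N_\ell = m^\ell/m^{\ell+1}$, yields $\ker(Br'_\ell \to Br'_{\ell-1}) \cong m^{\ell+2}/m^{\ell+3}$, the isomorphism being non-canonical since it depends on a trivialization of the associated graded.

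The main obstacle is twofold. First, the identification $Br_E \simeq \lim_\ell Br_{E/m^\ell}$ — in particular the absence of a $\lim^1$ contribution to $\pi_0$ and the faithful pro-nilpotent approximation of Azumaya algebras and Morita equivalences — is a genuine $K(n)$-local Artin--Rees/convergence statement, and proving it is where one actually needs the structure theory of ``atomic/molecular'' $E$-algebras, since the Antieau--Gepner étale-cohomological spectral sequence is designed for connective rings while $E$ is not connective. Second, the square-zero computation requires correctly matching the fiber of the Brauer spectra with $\pi_{-2}$ of the square-zero ideal and then tracking the degree shift through the two deloopings and the $2$-periodicity so that the answer lands in $m^{\ell+2}/m^{\ell+3}$ rather than off by a shift; the precise index $c$, equivalently the vanishing of $Br'$ for the first few truncations, must be established along the way. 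Everything else is bookkeeping around these two points.
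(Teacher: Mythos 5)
This theorem is cited by the survey from Hopkins--Lurie without proof; the only hint the survey offers about the argument is that a key ingredient is their construction of \emph{atomic} $E$-algebra spectra via a relative Thom spectrum functor applied to polarizations of lattices $Q\colon K(\Lambda,1)\to\PIC(E)$. Your proposal never engages with that machinery, so it takes a genuinely different route --- and at the step that actually pins down the graded pieces, it is both different and internally inconsistent.

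Your square-zero analysis identifies the fiber of $Br_{E/m^{\ell+1}}\to Br_{E/m^\ell}$, after the $2$-periodicity translation $\pi_{-2}N_\ell\cong\pi_0 N_\ell$, with $m^\ell/m^{\ell+1}$. You then try to match the theorem's $m^{\ell+2}/m^{\ell+3}$ by re-indexing $Br'_\ell:=\ker\bigl(Br(E/m^{\ell+c})\to\mathrm{BW}(\kappa)\bigr)$, choosing $c$ so that $Br'_0$ is the first nonzero stage. To produce layers $m^{\ell+2}/m^{\ell+3}$ from $m^{\ell+c-1}/m^{\ell+c}$ you would need $c=3$, which requires $Br'(E/m^2)=0$. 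But your own fiber computation gives $\ker\bigl(Br'(E/m^2)\to Br'(E/m)\bigr)\cong m/m^2\neq 0$, and $Br'(E/m)=0$, so $Br'(E/m^2)\neq 0$ and $c\leq 2$; the best your scheme can yield is $m^{\ell+1}/m^{\ell+2}$. For $n>1$ the graded pieces $m^k/m^{k+1}$ have distinct $\kappa$-dimensions, so this is not a cosmetic relabeling: your deformation-theoretic input produces the wrong layers, and the discrepancy cannot be absorbed by $2$-periodicity (which shifts the internal homotopical grading of $E$-modules, not the $m$-adic weight). Supplying the missing $m^2$ shift is precisely what the atomic/polarization apparatus in Hopkins--Lurie is for. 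The other two reductions you lean on --- $Br_E\simeq\lim_\ell Br_{E/m^\ell}$ with no $\lim^1$ interference, and $Br(K_n)\cong\mathrm{BW}(\kappa)$ --- are also nontrivial, but you at least flag those as obstacles; the grading error is not flagged and would break the argument even if those were granted.
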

One ingredient is their construction of 
\emph{atomic $E$-algebra spectra} \cite[Definition 1.0.2]{hopkins-l} 
via a Thom spectrum construction relative to $E$ for polarizations of lattices 
\cite[Definition 3.2.1]{hopkins-l} using the machinery from 
\cite{abghr,abghr2}. Here, the starting point is a lattice $\Lambda$ of finite 
rank together with a \emph{polarization map} $Q\colon K(\Lambda, 1) \ra \PIC(E) 
\simeq \Pic(E) \times BGL_1(E)$.

\end{document}